\def\namedlabel#1#2{\begingroup
    #2%
    \def\@currentlabel{#2}%
    \phantomsection\label{#1}\endgroup
}
\newcommand{\R}{\mathbb{R}}\newcommand{\Z}{\mathbb{Z}}
\newcommand{\El}{\mathcal{L}}
\newcommand{\Es}{\mathcal{S}}
\newcommand{\U}{\mathcal{U}}
\newcommand{\N}{\mathcal{N}}
\newcommand{\B}{\mathcal{B}}
\newcommand{\V}{\mathcal{V}_{\mathrm{in}}}
\newcommand{\I}{\mathcal{I}}
\numberwithin{equation}{section}
\def\de{\mathrm{d}}
\let\epsilon\varepsilon
\let\phi\varphi
\def\xt{\zeta}
\def\vt{z} 
\def\ut{w}
\def\V{\mathcal{V}}
\def\tp{{\tilde{p}}}
\newtheorem{theorem}{Theorem}[section]
\newtheorem{proposition}[theorem]{Proposition}
\newtheorem{remark}[theorem]{Remark}
\renewenvironment{proof}[1][\proofname] {\par\pushQED{\qed}\normalfont\topsep6\p@\@plus6\p@\relax\trivlist\item[\hskip\labelsep\bfseries#1\@addpunct{.}]\ignorespaces}{\popQED\endtrivlist\@endpefalse} \makeatother
\title{Global existence and decay in nonlinearly coupled reaction-diffusion-advection equations with different velocities}
\author{Bj\"orn de Rijk\footnote{\texttt{bjoern.derijk@mathematik.uni-stuttgart.de}} \and Guido Schneider\footnote{\texttt{guido.schneider@mathematik.uni-stuttgart.de}}}
\begin{document}

\maketitle

\begin{abstract}
We develop techniques to capture the effect of transport on the long-term dynamics of small, localized initial data in nonlinearly coupled reaction-diffusion-advection equations on the real line. It is well-known that quadratic or cubic nonlinearities in such systems can lead to growth of small, localized initial data and even finite time blow-up. We show that, if the components exhibit different velocities, then quadratic or cubic mix-terms, i.e.~terms with nontrivial contributions from both components, are harmless. We establish global existence and diffusive Gaussian-like decay for exponentially and algebraically localized initial conditions allowing for quadratic and cubic mix-terms. Our proof relies on a nonlinear iteration scheme that employs pointwise estimates. The situation becomes very delicate if \emph{other} quadratic or cubic terms are present in the system. We provide an example where a quadratic mix-term and a Burgers'-type coupling can compensate for a cubic term due to differences in velocities.

\textbf{Keywords.} Reaction-diffusion-advection systems, long-time asymptotics, pointwise estimates, global existence, small initial data 
\end{abstract}

\section{Introduction}

We consider nonlinearly coupled reaction-diffusion-advection equations on the real line of the form
\begin{align}
\begin{split}
u_t &= d_1 u_{xx} + c_1 u_x + f_1(u,v) + \left(g_1(u,v)\right)_x, \\
v_t &= d_2 v_{xx} + c_2 v_x + f_2(u,v) + \left(g_2(u,v)\right)_x,
\end{split} \qquad t \geq 0, x \in \R, \label{GRD}
\end{align}
with diffusion coefficients $d_i > 0$, velocities $c_i \in \R$ and smooth nonlinearities $f_i,g_i \colon \R^2 \to \R$ satisfying $f_i(0), \nabla\! f_i (0) = 0$ and $g_i(0), \nabla\! g_i (0) = 0$. We are interested in the effect of the nonlinearities and the velocities in~\eqref{GRD} on the long-term dynamics of small, localized initial data. 

Reaction-diffusion-advection (or -convection) systems describe diffusive reagents that each undergo a spatial drift. They are prototype models for pattern formation in many scientific disciplines, see~\cite{BOR,KEE,KUR,MUR} for applications in ecology, physiology, chemistry and biology. It is well-known~\cite{DOELEX,ROVM,TUR} that patterns can arise when a homogeneous rest state loses its stability and species diffuse or drift at different rates. In various models the interactions within and among species are purely nonlinear and thus the dynamics is described by a system of the form~\eqref{GRD}. Such nonlinear interactions arise naturally through mass-action kinetics~\cite{ERDI,GROE}; we refer to~\cite{BAA,BISI,MATK} for examples from (reversible) chemistry and combustion theory. In the case of such nonlinear interactions, the (essential) spectrum of the linearization about the homogeneous rest state touches the imaginary axis at the origin. Consequently, the stability of the homogeneous rest state cannot be determined by a spectral analysis. In fact, the nonlinearities can be decisive for stability against small, localized perturbations. This principle is well-understood in \emph{scalar} reaction-diffusion-advection equations, see Remark~\ref{scalar}.

In this paper, we show that in reaction-diffusion-advection \emph{systems} of the form~\eqref{GRD} not only the nonlinearities, but also the occurrence of different velocities, can be decisive for the long-term dynamics of small, localized initial data. Such differences in velocities occur naturally in various reaction-diffusion-advection models. For example, in pipe flow models~\cite{BARK} turbulence is advected more slowly than the centerline velocity. Moreover, in the Klausmaier model for semi-arid ecosystems on a sloped terrain~\cite{KLAU} the flow of water is governed by advection whereas biomass spreads only diffusively. In transport-reaction problems in porous media~\cite{KRAU} one distinguishes between mobile species undergoing advection-diffusion and immobile species. Finally, in semiconductor models~\cite{ALBI,Hu15} the electric field causes species to drift proportional to their (species-dependent) charge.

\subsection{Classification of nonlinearities}

In order to develop some intuition for which nonlinearities might be decisive for the long-term dynamics of small, localized solutions to~\eqref{GRD}, we formally separate linear from nonlinear behavior. By removing the nonlinear terms in~\eqref{GRD}, the components fully decouple and we obtain the diffusion-advection system
\begin{align}
\begin{split}
\phi_t &= D\phi_{xx} + C\phi_x,
\end{split} \qquad t \geq 0, x \in \R. \label{GRDLIN}
\end{align}
where $\phi = (u,v) \in \R^2$, $D = \mathrm{diag}(d_1,d_2)$ and $C = \mathrm{diag}(c_1,c_2)$. Localized solutions to~\eqref{GRDLIN} in $L^1(\R) \cap L^\infty(\R)$ decay in $L^\infty(\R)$ with algebraic rate $t^{-1/2}$. A family of localized solutions to~\eqref{GRDLIN} exhibiting the slowest decay rate $t^{-1/2}$ are linear combinations of the drifting Gaussians
\begin{align}
\frac{e^{-\tfrac{(x+c_it)^2}{4d_i(1+t)}}}{\sqrt{1+t}} E_i, \quad i = 1,2, \label{Gaussians}
\end{align}
where $E_i \in \R^2$ is the $i$-th unit vector. For such solutions 
the linear term $\phi_t - D \phi_{xx} - C\phi_x$ decays over time with rate $t^{-3/2}$. On the other hand, nonlinear terms of the form
\begin{align} \partial_x^\gamma (u^\alpha v^\beta), \qquad \alpha,\beta \in \Z_{\geq 0},\; \gamma = 0,1, \label{Gnon1}\end{align}
decay with rate $t^{-p/2}$ where $p := \alpha + \beta + \gamma$. Thus, if $p > 3$, then, for the slowest decaying solutions to the linearized system~\eqref{GRDLIN}, the nonlinear term~\eqref{Gnon1} decays faster than the linear term $\phi_t - D \phi_{xx} - C\phi_x$. This suggests that the linear dynamics about the rest state $(u,v) = 0$ in~\eqref{GRD} is not altered by the nonlinearity. In such a case we call the nonlinearity~\eqref{Gnon1} \emph{irrelevant}. On the other hand, for $p < 3$ the nonlinear dynamics might be dominant, in which case we say the nonlinearity~\eqref{Gnon1} is \emph{relevant}. Finally, a nonlinearity~\eqref{Gnon1} with critical value $p = 3$ is called \emph{marginal}. Thus, any smooth nonlinearity in~\eqref{GRD} can be labeled relevant, marginal or irrelevant by looking at the leading-order term of its power series expansion. This classification of (smooth) nonlinearities was introduced in~\cite{BKL} and can be generalized to reaction-diffusion-advection systems in $d$ spatial dimensions by replacing the critical threshold $p = 3$ by $p = 1 + \frac{2}{d}$, see also~\cite[Section 2]{UEC9}.

In scalar reaction-diffusion-advection equations, it was proven in~\cite{PONC,ZHENG} that, if the nonlinearity is irrelevant, then small, sufficiently localized initial data decay over time with rate $t^{-1/2}$. This proof extends without effort from the scalar to the multi-component setting. On the other hand, it was shown in~\cite{ESCLEV} that every solution to the nonlinearly coupled system
\begin{align*}
\begin{split}
u_t &= u_{xx} + u^{p_1}v^{q_1},\\
v_t &= v_{xx} + u^{p_2}v^{q_2},
\end{split} \qquad t \geq 0, x \in \R, 
\end{align*}
with initial condition $(u_0,v_0)$, such that $u_0,v_0 \geq 0$ and $u_0v_0 \neq 0$ hold pointwise, blows up in finite time if the nonlinearities are relevant or marginal, i.e.~if $p_i,q_i \geq 0$ satisfy $2 \leq p_i+q_i \leq 3$ for $i = 1,2$. Thus, as in the scalar setting (see Remark~\ref{scalar}) relevant or marginal nonlinearities are not automatically controlled by the linear  dynamics in~\eqref{GRD} and can be decisive for the long-term dynamics of small, localized solutions.

\begin{remark} \label{scalar}
{\upshape The effect of nonlinearities on the long-term dynamics of small, localized solutions has been well-studied in the scalar version of~\eqref{GRD}, i.e.~in the reaction-diffusion-advection equation
\begin{align}
\begin{split}
u_t &= d u_{xx} + c u_x + f(u) + \left(g(u)\right)_x,
\end{split} \qquad t \geq 0, x \in \R, \label{GRDSC}
\end{align}
with $d > 0, c \in \R$ and $f,g \colon \R \to \R$ smooth nonlinearities with $f(0), f'(0) = 0$ and $g(0), g'(0) = 0$. The transport term and the diffusion coefficient in~\eqref{GRDSC} can be eliminated by switching to the co-moving frame $\xi = d^{-1/2}(x + ct)$, which transforms~\eqref{GRDSC} into
\begin{align}
u_t &= u_{\xi\xi} + f(u) + d^{-\frac{1}{2}} \left(g(u)\right)_\xi, \qquad t \geq 0, \xi \in \R. \label{GRDCM}
\end{align}
It is proven in~\cite{PONC,ZHENG} that any irrelevant nonlinearity in~\eqref{GRDCM} leads to diffusive decay of small, sufficiently localized initial data with rate $t^{-1/2}$. Moreover, any relevant nonlinearity has, after rescaling, $u^2$ as leading-order coefficient, which can lead to growth of small, localized initial data and blow up in finite time~\cite{FUJI}. All marginal nonlinearities have $\alpha u^3 + \beta uu_\xi$ as leading-order coefficient for some $\alpha,\beta \in \R$. Having $\alpha > 0$ can lead to growth and blow up in finite time~\cite{HAYA}, whereas it is shown in~\cite{BKL} that for $\alpha \leq 0$ all small, localized initial data in~\eqref{GRDCM} decays with rate $t^{-1/2}$.
}\end{remark}

\subsection{The effect of different velocities}

We claim that, in contrast to the scalar setting, the velocities $c_i$ in system~\eqref{GRD} play a pivotal role in determining which relevant and marginal nonlinear terms can and cannot be controlled by the linear dynamics. In order to illustrate the latter on a formal level, we introduce the notions of a \emph{nonlinear coupling}, which is a nonlinear term~\eqref{Gnon1} in the $u$-equation with $\beta \geq 1$ or in the $v$-equation with $\alpha \geq 1$, and the notion of a \emph{mix-term}, which is a term of the form~\eqref{Gnon1} with both $\alpha \geq 1$ and $\beta \geq 1$. As above, we consider the slowest decaying, localized solutions to the linear system~\eqref{GRDLIN}, which are linear combinations of the drifting Gaussians~\eqref{Gaussians}. For such solutions, mix-terms decay with \emph{exponential} rate
\begin{align*} t^{-\tfrac{p}{2}} e^{-\tfrac{\left(c_1 - c_2\right)^2 t}{4\left(d_1+d_2\right)}},\end{align*}
with $p = \alpha + \beta + \gamma$, which can be seen by completing the square. This suggests that, if components propagate with different velocities $c_1 \neq c_2$, then mix-terms do not alter the linear dynamics about the rest state $(u,v) = 0$ in~\eqref{GRD}, no matter the value of $p$.

The first result of this paper confirms this conjecture. We prove that, if all nonlinear couplings in~\eqref{GRD} are mix-terms and components exhibit different velocities, then solutions with small, algebraically or exponentially localized initial data exist globally and decay over time with rate $t^{-1/2}$. We note that the requirement that all nonlinear couplings in~\eqref{GRD} are mix-terms is equivalent to the condition that $\{u = 0\}$ and $\{v = 0\}$ are invariant subspaces for system~\eqref{GRD}.

We take our initial data from (weighted) H\"older spaces, since in such spaces local existence and uniqueness of classical solutions to~\eqref{GRD} is naturally obtained; see for instance~\cite{LUN}. Of course, one could consider more general spaces of initial data, but since our main focus lies on the effects of different velocities on the long-term dynamics, we refrain from doing so. Thus, we let $\alpha \in (0,1)$ and introduce the spaces
\begin{align*} X_\rho^\alpha := \left\{z \in C^{0,\alpha}(\R,\R^2) : \|z\rho\|_\infty < \infty\right\}, \end{align*}
of bounded, H\"older continuous initial conditions with weight $\rho \colon \R \to [1,\infty)$. We endow $X_\rho^\alpha$ with the norm $\|z\|_{\rho} := \|z\rho\|_\infty$, where $\|\cdot\|_\infty$ denotes the supremum norm, and establish the following result.

\begin{theorem} \label{mainresult1}
Let $\alpha \in (0,1)$. Suppose $d_i > 0$ and $c_1 \neq c_2$ in~\eqref{GRD} and assume that there exist constants $C> 1$ and $r_0 > 0$ such that the nonlinearities $f_i,g_i \in C^{2,\alpha}(\R^2,\R)$ in~\eqref{GRD} satisfy
\begin{align}
\begin{split}
|f_1(u,v)| \leq C\left(|u|^4 + |u||v|\right), &\qquad |g_1(u,v)| \leq C\left(|u|^2 + |u||v|\right),\\
|f_2(u,v)| \leq C\left(|v|^4 + |u||v|\right), &\qquad |g_2(u,v)| \leq C\left(|v|^2 + |u||v|\right),
\end{split} \qquad \text{for } |u|,|v| \leq r_0. \label{nonlinearbounds1}
\end{align}
Then, there exists $M_0 \geq 1$ such that, for all $M \geq M_0$, $r \geq 3$ and $\epsilon > 0$, there exists a $\delta > 0$ such that~\eqref{GRD} together with one of the following initial conditions
\begin{itemize}
\item[E)] $(u_0,v_0) \in X_{\rho_E}^\alpha$ satisfying $\|(u_0,v_0)\|_{\rho_E} < \delta$ with exponential weight $\rho_E(x) = e^{x^2/M}$;
\item[A)] $(u_0,v_0) \in X_{\rho_A}^\alpha$ satisfying $\|(u_0,v_0)\|_{\rho_A} < \delta$ with algebraic weight $\rho_A(x) = (1+|x|)^r$;
\end{itemize}
has a classical global solution $(u,v) \in C^{1,\frac{\alpha}{2}}\left([0,\infty),C^{2,\alpha}(\R,\R^2)\right)$ satisfying
\begin{align} \|(u, v)(\cdot,t)\|_\infty  \leq \frac{\epsilon}{\sqrt{1 + t}}, \qquad \|(u, v)(\cdot,t)\|_1 \leq \epsilon, \qquad \text{for } t \geq 0. \label{temporaldec}\end{align}
More specifically, for exponentially localized initial data $(u_0,v_0) \in X_{\rho_E}^\alpha$ satisfying $\|(u_0,v_0)\|_{\rho_E} < \delta$ we obtain the Gaussian decay estimates
\begin{align} \left|u(x,t)\right| \leq \epsilon \frac{e^{-\tfrac{(x+c_1t)^2}{M(1+t)}}}{\sqrt{1+t}}, \qquad \left|v(x,t)\right| \leq \epsilon\frac{e^{-\tfrac{(x+c_2t)^2}{M(1+t)}}}{\sqrt{1+t}},\label{pointdec}\end{align}
for all $x \in \R$ and $t \geq 0$.
\end{theorem}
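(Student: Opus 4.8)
The plan is to recast \eqref{GRD} through the variation-of-constants formula and to run a pointwise nonlinear iteration against drifting template functions, crucially exploiting that the linear part of \eqref{GRD} decouples. Writing $\phi=(u,v)$, the semigroup generated by $\mathrm{diag}(d_1,d_2)\partial_x^2+\mathrm{diag}(c_1,c_2)\partial_x$ is diagonal with kernels $G_i(x,t)=(4\pi d_it)^{-1/2}\exp\!\big(-(x+c_it)^2/(4d_it)\big)$, so that
\[
u(\cdot,t)=G_1(\cdot,t)*u_0+\int_0^t G_1(\cdot,t-s)*f_1(u,v)(s)\,ds+\int_0^t \partial_xG_1(\cdot,t-s)*g_1(u,v)(s)\,ds
\]
and analogously for $v$. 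As templates I take, in case~E), the drifting Gaussians $\psi_i(x,t)=(1+t)^{-1/2}\exp\!\big(-(x+c_it)^2/(M(1+t))\big)$ and, in case~A), the drifting algebraic bumps $\psi_i(x,t)=(1+t)^{-1/2}(1+(x+c_it)^2/(1+t))^{-r/2}$; both satisfy $\|\psi_i(\cdot,t)\|_\infty\le(1+t)^{-1/2}$ and $\sup_{t\ge0}\|\psi_i(\cdot,t)\|_1<\infty$ (the latter using $r>1$ in case~A). Local existence and uniqueness of a classical solution in the weighted H\"older class on a maximal interval $[0,T_{\max})$ is standard~\cite{LUN}; moreover the solution's spatial localization, while degraded by the heat flow, remains (for $M$ large, resp.\ for the given $r$) strong enough to dominate $1/\psi_i(\cdot,t)$, so that
\[
\eta(T):=\sup_{0\le t\le T}\Big(\sup_{x\in\R}|u(x,t)|/\psi_1(x,t)+\sup_{x\in\R}|v(x,t)|/\psi_2(x,t)\Big)
\]
is finite and continuous on $[0,T_{\max})$ with $\eta(0)<2\delta$.

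The first ingredient is a family of \emph{linear template estimates}, $G_i(\cdot,t)*\rho^{-1}\lesssim\psi_i(\cdot,t)$ (requiring $M\gtrsim d_i$ in case~E), together with the elementary convolution identities for Gaussians, resp.\ the corresponding bounds for algebraic weights, and the resulting \emph{time-convolution estimates}
\[
\int_0^t G_i(\cdot,t-s)*\big[(1+s)^{-a}\chi_i(\cdot,s)\big]\,ds\ \lesssim\ C\psi_i(\cdot,t),\qquad
\int_0^t \partial_xG_i(\cdot,t-s)*\big[(1+s)^{-b}\chi_i(\cdot,s)\big]\,ds\ \lesssim\ C\psi_i(\cdot,t),
\]
valid whenever $\chi_i(\cdot,s)$ is a bump centered at $-c_is$ of the same width (resp.\ rate) as $\psi_i$ and the exponents make the $s$-integral converge with decay $(1+t)^{-1/2}$; concretely $a=3/2$ for the quartic terms and $b=1$ for the marginal quadratic term. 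These are pointwise estimates in the spirit of~\cite{BKL}.

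The heart of the proof is the estimate of the three types of nonlinear contributions to the Duhamel formula under the running bound $|u|\le\eta\psi_1$, $|v|\le\eta\psi_2$.
\emph{(i)} The pure terms $|u|^4$ and $|v|^4$ are irrelevant: $\psi_1^4\lesssim(1+s)^{-2}$ times a bump centered at $-c_1s$, so their contribution is $\lesssim C\eta^4\psi_1(\cdot,t)$ by the time-convolution estimate.
\emph{(ii)} The part of $\partial_xg_1$ bounded by $|u|^2$ is a marginal Burgers-type term: $\psi_1^2\lesssim(1+s)^{-1}$ times a bump centered at $-c_1s$, the \emph{same} drift as $G_1$, so the convolution deposits it exactly at $-c_1t$ and the factor $(t-s)^{-1/2}$ from $\partial_x$ keeps the $s$-integral finite; the contribution is $\lesssim C_B\eta^2\psi_1(\cdot,t)$ with $C_B$ a \emph{fixed} constant that gains no smallness (this is why $\delta$ must ultimately be taken small relative to $C_B$, hence relative to $M$).
\emph{(iii)} The mix-terms, bounded by $|u||v|\le\eta^2\psi_1\psi_2$, are where $c_1\ne c_2$ is decisive. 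In case~E), completing the square gives
\[
\psi_1(y,s)\psi_2(y,s)=(1+s)^{-1}\exp\!\Big(-\frac{2(y+\bar cs)^2}{M(1+s)}\Big)\exp\!\Big(-\frac{(c_1-c_2)^2s^2}{2M(1+s)}\Big),\qquad\bar c:=\tfrac{c_1+c_2}{2},
\]
so that $\psi_1\psi_2$ is, up to a constant, $e^{-\kappa s}$ (in fact $e^{-2\kappa s}$ for $s\gtrsim1$) with $\kappa:=(c_1-c_2)^2/(4M)>0$, times a Gaussian \emph{narrower} than $\psi_1$ and centered at the midpoint $-\bar cs$. Convolving with $G_1(\cdot,t-s)$ produces a Gaussian centered at $-c_1t+\tfrac{c_1-c_2}{2}s$ whose width is — and this is the decisive point, requiring $M\ge8\max\{d_1,d_2\}$ — at most $\tfrac12M(1+t)$; a careful evaluation of the $s$-integral (which is peaked at the $s$ for which the deposition point reaches $x$) then shows that the exponential suppression together with the Gaussian tail dominates $\psi_1(x,t)$ at the offset, using $2\kappa s\ge(\tfrac{c_1-c_2}{2}s)^2/(M(1+t))$ for $0\le s\le t$. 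One obtains $\lesssim C_{\mathrm{mix}}\eta^2\psi_1(\cdot,t)$ with $C_{\mathrm{mix}}$ depending only on $M$ and $c_1-c_2$. In case~A) the same mechanism operates with the exponential gain replaced by an algebraic one: the product of two algebraic bumps that are $|c_1-c_2|s$ apart is $\lesssim(1+s)^{-r}$ times a single bump, and $r\ge3$ makes the $s$-integral converge with the correct decay.

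Collecting \emph{(i)}--\emph{(iii)} and the linear estimate of $G_1(\cdot,t)*u_0$, and symmetrically for $v$, gives
\[
\eta(T)\ \le\ C_0\delta+C_1\eta(T)^2+C_2\eta(T)^4
\]
with $C_0,C_1,C_2$ independent of $T$ and $\delta$ (but depending on $M,r,d_i,c_i$). Since $\eta$ is continuous with $\eta(0)<2\delta$ and this inequality admits only the small root $\approx C_0\delta$ once $C_0\delta$ is small, a standard continuity argument yields $\eta(T)\le2C_0\delta$ for all $T<T_{\max}$ as soon as $\delta$ is small enough (in particular $\delta\lesssim1/C_1$). The uniform bound $|u|\le2C_0\delta\,\psi_1$, $|v|\le2C_0\delta\,\psi_2$ rules out blow-up in the weighted H\"older class through the local theory, so $T_{\max}=\infty$; choosing finally $\delta$ so small that $2C_0\delta\,\max\{1,\sup_t\|\psi_i(\cdot,t)\|_1\}\le\epsilon$ gives \eqref{temporaldec}, and $|u|\le2C_0\delta\,\psi_1\le\epsilon\,\psi_1$ in case~E) is \eqref{pointdec}. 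I expect the main obstacle to be step~\emph{(iii)}: one must propagate the Gaussian widths (resp.\ the algebraic rates and centers) through the convolution \emph{exactly}, since being off by a constant in the width, or in the number of extractable $\kappa$-factors, would break the comparison with $\psi_i$ — it is precisely this bookkeeping that forces the thresholds $M\ge8\max\{d_i\}$ (resp.\ $r\ge3$) and in which the different-velocities hypothesis does its work.
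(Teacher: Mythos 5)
Your proposal follows essentially the same route as the paper: Duhamel's formula for the decoupled diagonal semigroup, pointwise comparison against drifting templates, a continuity/continuous-induction argument on the weighted sup $\eta$, and — the decisive point — completing the square in the product $\psi_1\psi_2$ to extract the factor $e^{-(c_1-c_2)^2s^2/(2M(1+s))}$, with the width bookkeeping forcing $M\gtrsim\max d_i$ (the paper takes $M_0=\max\{16d_1,16d_2,1\}$ to cover the algebraic case). Your case~E) argument, including the treatment of the quartic, Burgers and mix terms and the final quadratic inequality, matches the paper's \S4.2--4.3 in all essentials. The one structural difference is the algebraic template: you use a single self-similar bump $(1+t)^{-1/2}(1+(x+c_it)^2/(1+t))^{-r/2}$, whereas the paper uses the sum $(1+|x+c_it|+\sqrt{t})^{-r}+(1+t)^{-1/2}e^{-(x+c_it)^2/(M(1+t))}$ coming from~\cite[Cor.~6.16]{JUN}. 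Your template dominates $\min\{t^{-1/2},(1+|x+c_it|)^{-r}\}$, so the linear estimate goes through, and it has uniformly bounded $L^1$ and $L^\infty$ norms, so the conclusions \eqref{temporaldec} follow; it is simply less sharp in the core (it does not record the $t^{-r/2}$ decay of the algebraic correction).

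The soft spot is your step \emph{(iii)} in case~A). First, the quantitative claim is off: the product of the two bumps separated by $|c_1-c_2|s$ gains a factor $\big(1+(c_1-c_2)^2s^2/(4(1+s))\big)^{-r/2}\sim(1+s)^{-r/2}$, not $(1+s)^{-r}$, on top of the $(1+s)^{-1}$ product of heights; for $r\geq3$ this still yields the integrable prefactor $(1+s)^{-(r+1)/2}\leq(1+s)^{-2}$, so the conclusion survives, but the threshold arithmetic should be redone. Second, and more importantly, the resulting ``single bump'' is centered at $-c_1s$ only on the half-line nearer to $-c_1s$; on the other half it is a bump drifting with velocity $c_2$, so after convolution with $G_1(\cdot,t-s)$ its mass is deposited at $-c_1t+(c_1-c_2)s$, \emph{not} at $-c_1t$. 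Your opening time-convolution estimate is stated only for sources $\chi_i$ centered at $-c_is$, so it does not apply here, and the naive sup-norm bound $\int_0^t(1+s)^{-(r+1)/2}\min\{(1+s)^{-1/2},(t-s)^{-1/2}\}\,\de s$ only gives $O(1)$, not $O(t^{-1/2})$. One must track where the off-center bump lands relative to $x$ and split into the regions $|y+c_1s|\gtrless\tfrac12|x+c_1t|$ and $s\lessgtr t/2$ — this is precisely the content of the paper's estimates on the integrals $III_j$ and $IV_j$ in \S4.4, and it is where most of the work in case~A) actually sits. The mechanism you describe does close (and the resonance analysis again uses $r>2$), but as written this step is a genuine gap in the algebraic case.
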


To exploit the difference in spatial transport between components, we use pointwise estimates to prove Theorem~\ref{mainresult1}. For exponentially localized initial data, we adopt the slowest decaying, localized solutions to the linear system~\eqref{GRDLIN}, i.e.~the Gaussians~\eqref{Gaussians}, as pointwise upper bounds in a spatio-temporal nonlinear iteration scheme, which eventually yields~\eqref{pointdec}. Hence, if the nonlinearity in~\eqref{GRD} satisfies~\eqref{nonlinearbounds1}, then the obtained pointwise decay is as predicted by the linear dynamics.

Algebraically localized initial data lead to additional algebraic correction terms in the pointwise upper bounds. The associated pointwise decay estimates are technically more involved than~\eqref{pointdec} and can be found in Remark~\ref{spatiotemporalbounds1}.

The method of pointwise estimates was developed in the setting of viscous shock waves~\cite{ZUH,HOWZUM} and has proven to be a powerful tool in a large variety of nonlinear stability problems. We refer to~\cite{BECEX} for an expository article and to~\cite[Section 6]{JUN} for a straightforward application in the setting of the nonlinear heat equation $u_t = u_{xx} + u^p$ with irrelevant nonlinearity, i.e.~with $p > 3$. In~\S\ref{overview} we give an extensive, but by no means exhaustive, overview of other methods to prove global existence of (small) solutions in reaction-diffusion systems, see also~\cite[Section 14]{SUbook}. To the author's best knowledge, the effect of different velocities in~\eqref{GRD} on the long-time dynamics of small initial data has not been investigated in literature prior to this paper.

\subsection{Including irrelevant nonlinear couplings which are not of mix-type} \label{sec:mainresult2}

If \emph{nonlinear couplings which are not of mix-type} are present in system~\eqref{GRD}, i.e.~if the $u$-equation in~\eqref{GRD} has a nonlinear term~\eqref{Gnon1} with $\alpha = 0$ or the $v$-equation possesses a term~\eqref{Gnon1} with $\beta = 0$, then the analysis breaks down as the spatial localization imposed by the nonlinear iteration scheme is too restrictive. Indeed, if $v(x,t)$ is a drifting Gaussian propagating with speed $c_2$, then a $\partial_x^j (v^\beta)$-term in the $u$-equation leads to a nonlinear contribution in Duhamel's formula (or the variation of constants formula) of the form
\begin{align} \int_0^t \frac{Ce^{-\tfrac{\left(x + t c_1 + s (c_2 - c_1)\right)^2}{M(1+t)}}}{\sqrt{1+t} (1+s)^{(\beta - 1)/2} (t-s)^{j/2}} \de s, \label{drag}\end{align}
for some $C,M > 0$, which cannot be controlled by a Gaussian propagating with speed $c_1 \neq c_2$. However, by incorporating upper bounds of the form~\eqref{drag} into the nonlinear iteration scheme, we can accommodate \emph{all} irrelevant nonlinear terms in the analysis, thus in particular all irrelevant nonlinear couplings. We establish global existence and temporal decay with rate $t^{-1/2}$ for solutions to~\eqref{GRD} with small, exponentially localized initial conditions allowing for irrelevant nonlinear terms and nonlinear mix-terms.

\begin{theorem} \label{mainresult2}
Let $\alpha \in (0,1)$ and consider for $M \geq 1$ the exponential weight $\rho_E(x) = e^{x^2/M}$. Suppose $d_i > 0$ and $c_1 \neq c_2$ in~\eqref{GRD} and assume that there exist constants $C > 1$ and $r_0 > 0$ such that the nonlinearities $f_i,g_i \in C^{2,\alpha}(\R^2,\R)$ in~\eqref{GRD} satisfy
\begin{align}
\begin{split}
|f_1(u,v)| \leq C\left(|u|^4 + |u||v| + |v|^4\right), &\qquad |g_1(u,v)| \leq C\left(|u|^2 + |u||v| + |v|^3\right),\\
|f_2(u,v)| \leq C\left(|v|^4 + |u||v| + |u|^4\right), &\qquad |g_2(u,v)| \leq C\left(|v|^2 + |u||v| + |u|^3\right),
\end{split} \qquad \text{for } |u|, |v| \leq r_0. \label{nonlinearbounds2}
\end{align}
Then, there exists $M_0 \geq 1$ such that, for all $M \geq M_0$ and $\epsilon > 0$, there exists a $\delta > 0$ such that~\eqref{GRD} together with the initial condition $(u_0,v_0) \in X_{\rho_E}^\alpha$ satisfying $\|(u_0,v_0)\|_{\rho_E} < \delta$ has a classical global solution $(u,v) \in C^{1,\frac{\alpha}{2}}\left([0,\infty),C^{2,\alpha}(\R,\R^2)\right)$ satisfying
\begin{align} \|(u, v)(\cdot,t)\|_\infty  \leq \frac{\epsilon}{\sqrt{1 + t}}, \qquad \|(u, v)(\cdot,t)\|_1 \leq \epsilon, \qquad \text{for } t \geq 0. \label{temporaldec2}\end{align}
In particular, the homogeneous rest state $(0,0)$ in~\eqref{GRD} is nonlinearly stable against small, exponentially localized perturbations from $X^\alpha_{\rho_E}$.
\end{theorem}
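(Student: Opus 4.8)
The plan is to recast~\eqref{GRD} in integrated (Duhamel) form and to run a nonlinear iteration scheme — equivalently, a continuity argument — on a family of pointwise templates adapted to the nonlinearity~\eqref{nonlinearbounds2}. Write $S_i(t)$ for the analytic semigroup generated by $d_i\partial_x^2 + c_i\partial_x$ on $\R$, whose kernel is the drifting Gaussian $(4\pi d_it)^{-1/2}e^{-\tfrac{(x-y+c_it)^2}{4d_it}}$. By standard parabolic theory in (weighted) H\"older spaces (see~\cite{LUN}), the initial value problem for $(u_0,v_0)\in X^\alpha_{\rho_E}$ admits a unique maximal classical solution $(u,v)\in C^{1,\alpha/2}\bigl([0,T_{\max}),C^{2,\alpha}(\R,\R^2)\bigr)$ solving
\begin{align*}
u(t) &= S_1(t)u_0 + \int_0^t S_1(t-s)\bigl[f_1(u,v) + \partial_x g_1(u,v)\bigr](s)\,\de s,\\
v(t) &= S_2(t)v_0 + \int_0^t S_2(t-s)\bigl[f_2(u,v) + \partial_x g_2(u,v)\bigr](s)\,\de s.
\end{align*}
Hence the entire task reduces to an a priori pointwise bound that both precludes $T_{\max}<\infty$ and yields~\eqref{temporaldec2}.

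For $M$ large I introduce for each component a template $\Phi_i(x,t)>0$ consisting of the slowest-decaying linear Gaussian mode together with finitely many \emph{drag} corrections of the type in~\eqref{drag}:
\begin{align*}
\Phi_i(x,t) &= \frac{e^{-\tfrac{(x+c_it)^2}{M(1+t)}}}{\sqrt{1+t}} + \sum_{(\beta,j)} \int_0^t \frac{e^{-\tfrac{\left(x + c_it + s(c_{3-i}-c_i)\right)^2}{M(1+t)}}}{\sqrt{1+t}\,(1+s)^{(\beta-1)/2}\,(t-s)^{j/2}}\,\de s,
\end{align*}
the finite sum running over the non-mix terms of the $i$-th equation in~\eqref{nonlinearbounds2}, i.e.~$(\beta,j)=(4,0)$ for the $|v|^4$-contribution to $f_1$ and $(\beta,j)=(3,1)$ for the $|v|^3$-contribution to $g_1$, and symmetrically for $\Phi_2$. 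Since the time integrals $\int_0^t(1+s)^{-(\beta-1)/2}(t-s)^{-j/2}\,\de s$ stay bounded uniformly in $t$ for these $(\beta,j)$, completing the square yields $\Phi_i(x,t)\lesssim(1+t)^{-1/2}$ and $\|\Phi_i(\cdot,t)\|_1\lesssim 1$ uniformly in $t$. Now define
\begin{align*}
\Theta(T) &:= \sup_{t\in[0,T]} \left( \left\|\frac{u(\cdot,t)}{\Phi_1(\cdot,t)}\right\|_\infty + \left\|\frac{v(\cdot,t)}{\Phi_2(\cdot,t)}\right\|_\infty \right),
\end{align*}
which is finite, continuous on $[0,T_{\max})$, and satisfies $\Theta(0)\le C_0\|(u_0,v_0)\|_{\rho_E}$, the last point requiring $M$ so large that $S_i(t)$ maps $e^{-x^2/M}$ below a fixed multiple of the drifting Gaussian in $\Phi_i$.

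The crux is the nonlinear estimate $\Theta(T)\le C_1\|(u_0,v_0)\|_{\rho_E} + C_2\Theta(T)^2$, valid while $\Theta(T)\le r_0$. One establishes it by inserting $|u|\le\Theta(T)\Phi_1$ and $|v|\le\Theta(T)\Phi_2$ into each Duhamel integral and bounding the result by $C\Theta(T)^2\Phi_i(x,t)$. Three mechanisms enter: (i) for the \emph{mix-terms} $|u||v|$, $\partial_x(|u||v|)$ the product of a speed-$c_1$ and a speed-$c_2$ Gaussian produces, on completing the square, the factor $e^{-(c_1-c_2)^2 t/(C(1+t))}$, which makes the $s$-integral converge and keeps the outcome under a speed-$c_i$ Gaussian — this is precisely where $c_1\ne c_2$ is used; (ii) the \emph{self-terms} $|u|^4$ and $\partial_x(|u|^2)$ in the $u$-equation (and symmetrically in the $v$-equation) are controlled by standard moving heat-kernel estimates, the divergence-form term $\partial_x(|u|^2)$ being as harmless as in the scalar Burgers equation and the quartic term as in $u_t=u_{xx}+u^p$, $p>3$, cf.~\cite[Section 6]{JUN}; (iii) the \emph{non-mix irrelevant terms} $|v|^4$, $\partial_x(|v|^3)$ in the $u$-equation, fed from $\Phi_2$ and convolved against $S_1(t-s)$, reproduce exactly the drag corrections built into $\Phi_1$. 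Granted this estimate, the standard continuity argument closes the proof: for $\delta$ small, $\Theta(0)\le C_1\delta$ together with $\Theta(T)\le C_1\delta + C_2\Theta(T)^2$ forces $\Theta(T)\le 2C_1\delta < r_0$ on all of $[0,T_{\max})$; thus $(u,v)$ remains pointwise dominated by $2C_1\delta\,(\Phi_1,\Phi_2)$, whence $T_{\max}=\infty$, and the uniform bounds $\Phi_i\lesssim(1+t)^{-1/2}$, $\|\Phi_i(\cdot,t)\|_1\lesssim 1$ give~\eqref{temporaldec2} after shrinking $\delta$ to absorb constants into $\epsilon$; nonlinear stability of $(0,0)$ is a restatement.

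I expect the main obstacle to be the self-consistency of the drag family under iteration: one must check that the finitely many drag profiles are closed under every convolution the scheme generates — in particular that a drag term of $\Phi_1$, routed back through the mix-term $|u||v|$ into the $v$-equation, still produces (through the $c_1\ne c_2$ gain) something dominated by $\Phi_2$, and that the nested $s$-integrals never generate a drag profile with too small a time weight. Pinning down a single $M_0$ large enough to absorb all the Gaussian-convolution constants simultaneously, while controlling the logarithmic losses that surface in some of the $s$-integrals, is the remaining technical bookkeeping.
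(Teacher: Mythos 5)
Your overall architecture coincides with the paper's: Duhamel formulation, pointwise templates consisting of a drifting Gaussian plus drag corrections of type~\eqref{drag}, a quadratic key inequality $\Theta\le C(\delta+\Theta^2)$, and continuous induction; you also correctly locate where $c_1\ne c_2$ enters. The genuine gap is exactly the point you defer as ``remaining technical bookkeeping'': the closure of the drag family under iteration. This is not bookkeeping — it is the core of the proof — and the specific templates you wrote down are provably not closed. Your drag weights are $(1+s)^{-3/2}$ (from $|v|^4$) and $(1+s)^{-1}(t-s)^{-1/2}$ (from $\partial_x(|v|^3)$); these only account for the \emph{first generation} of drag terms, i.e.\ non-mix couplings acting on the Gaussian part of the other component. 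But once a drag term sits inside $\Phi_i$ it re-enters every nonlinearity: the self-term $\partial_x(u^2)$ with one factor of $u$ bounded by a drag profile produces an output drag term with weight $(1+r)^{-3/4}r^{-1/2}\sim(1+r)^{-5/4}$ (the paper's $J_3$, see~\eqref{estJ3}); the mix-term $uv$ with one or both factors of drag type produces the same weight (see~\eqref{estJ4} and~\eqref{estJ53}); and $|v|^4$ with a drag factor gives weight $(1+\tilde r)^{-1}\bigl(\tilde r^{-1/2}+(t-\tilde r)^{-1/2}\bigr)$ (see~\eqref{estJ2}). At $s\sim t/2$ these outputs are of size $t^{-5/4}$ while your two weights are both of size $t^{-3/2}$, so no constant multiple of your $\Phi_i$ dominates the iteration output, and the inequality $\Theta(T)\le C_1\delta+C_2\Theta(T)^2$ does not follow from the estimates you describe.

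The repair is to choose the drag weight as a (near-)fixed point of the iteration from the outset: the paper takes $\tfrac{1}{1+r}\bigl(\tfrac{\sqrt[4]{1+r}}{\sqrt{r}}+\tfrac{1}{\sqrt{s-r}}\bigr)$, i.e.\ $(1+r)^{-3/4}r^{-1/2}+(1+r)^{-1}(s-r)^{-1/2}$, and then verifies through the estimates $J_1,\dots,J_6$ in \S\ref{sec:proofMR2} that every convolution the scheme generates — Gaussian against drag, drag against drag, and drag routed through the mix-term into the other component — lands back under this profile, using $c_1\ne c_2$ repeatedly via~\eqref{integralID4}--\eqref{expest} to control the nested integrals over the intermediate times. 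That verification is the bulk of the proof. Your treatment of the linear term, of the pure mix- and self-terms acting on Gaussians, and the concluding continuity argument are correct and match the paper.
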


\begin{figure}[b!]
\begin{subfigure}{.48 \textwidth}
\centering
\includegraphics[width=0.98\linewidth]{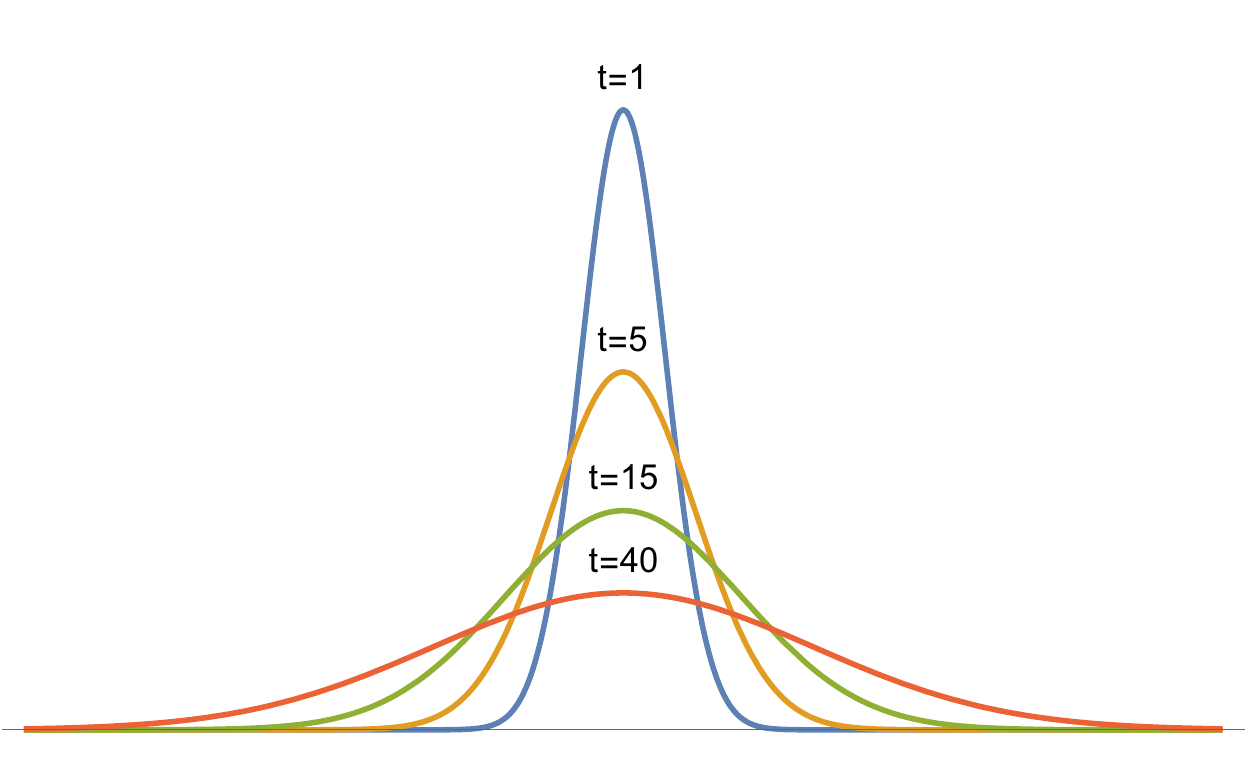}
\caption{Diffusive decay of a Gaussian $\theta(x,t) = \frac{e^{-\tfrac{x^2}{M(1+t)}}}{\sqrt{1+t}}$ with $M = 40$. }
\end{subfigure}
\hspace{.05 \textwidth}
\begin{subfigure}{.48 \textwidth}
\centering
\includegraphics[width=0.98\linewidth]{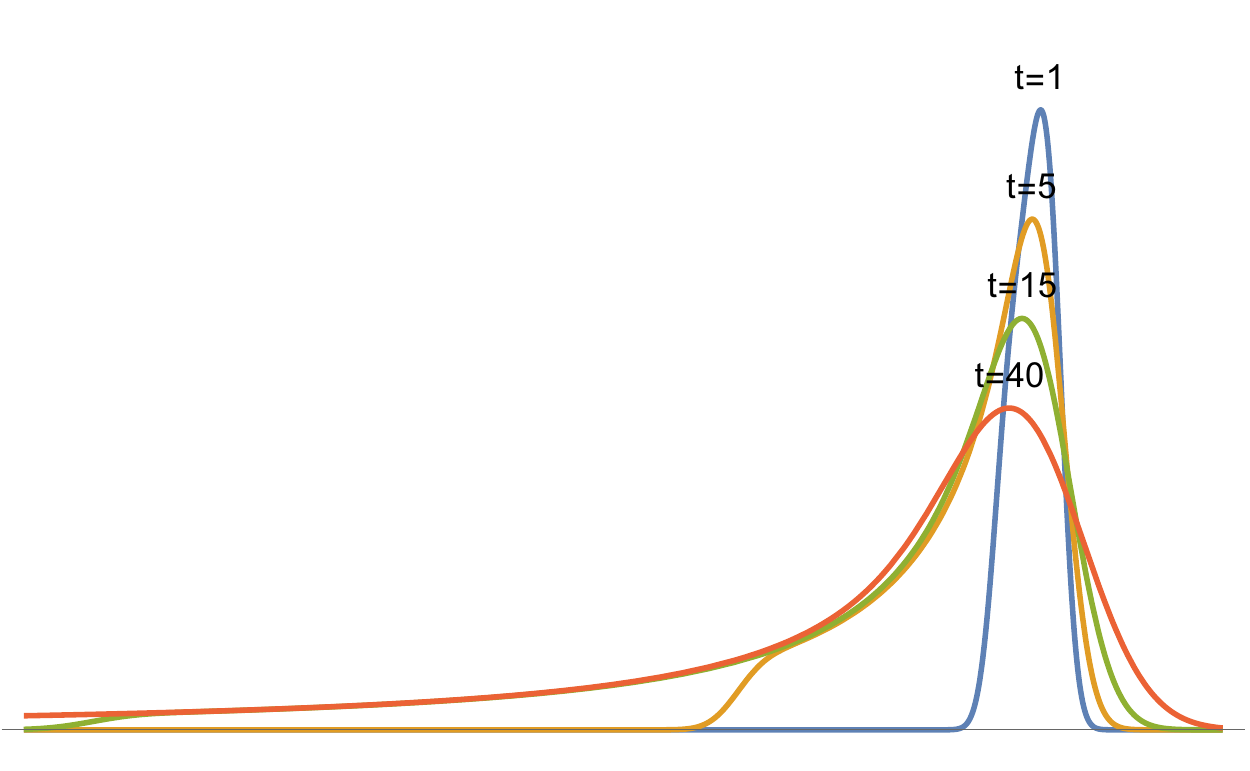}
\caption{Non-Gaussian-like decay as exhibited by the function~\eqref{drag} with $c_1 = 0$, $c_2 = 10$, $\beta = 4$, $j = 0$ and $M = 4$.}
\end{subfigure}
\caption{}\label{decaytypes}
\end{figure}

The pointwise nonlinear iteration scheme employed in the proof of Theorem~\ref{mainresult2} provides more detailed (pointwise) decay estimates on the solution than~\eqref{temporaldec2}. Due to their technicality, these decay estimates are provided in Remark~\ref{dragboundrem}. We note that the decay is no longer captured by drifting Gaussians only, as in~\eqref{pointdec}. Instead, terms of the form~\eqref{drag}, which exhibit non-Gaussian-like decay (see Figure~\ref{decaytypes}), need to be incorporated.

We prove Theorem~\ref{mainresult2} only for exponentially localized initial data. Algebraically localized initial conditions introduce additional algebraic correction terms, which would make the analysis technically more involved. Since our main goal is to capture the effect of different velocities rather than to study the precise localization properties of the initial data, we chose to work with exponential weights only in Theorem~\ref{mainresult2} for clarity of exposition.

\subsection{Including relevant or marginal couplings which are not of mix-type}

As the difference $c_2 - c_1$ is occurring in the bound~\eqref{drag}, one might expect that a difference in velocities can even be exploited to control relevant or marginal nonlinear couplings which are not of mix-type. Yet, it turns out that, in general, such relevant or marginal nonlinear couplings cannot be included in Theorem~\ref{mainresult2} and can, in fact, lead to growth of small, localized initial data. We demonstrate this by showing that any nontrivial solution with (pointwise) nonnegative initial conditions in the system
\begin{align}
\begin{split}
u_t &= d_1u_{xx} + c_1 u_x + v^2,\\
v_t &= d_2v_{xx} + c_2 v_x + u^2,
\end{split} \qquad t \geq 0, x \in \R, \label{CAS2}
\end{align}
with relevant nonlinear couplings admits lower bounds in the $L^1$- and $L^\infty$-norms that are growing over time, even in the case $c_1 \neq c_2$.

\begin{theorem} \label{mainresult3}
Let $\alpha \in (0,1)$ and let $d_i > 0$, $c_i \in \R$ in~\eqref{CAS2}. Take $(u_0,v_0) \in C^{0,\alpha}(\R,\R^2) \setminus \{0\}$ such that $u_0(x),v_0(x) \geq 0$ holds for all $x \in \R$. Then, there exists $c > 0$ such that the solution $(u,v) \in C^{1,\frac{\alpha}{2}}\left([0,T),C^{2,\alpha}(\R,\R^2)\right)$ to~\eqref{CAS2} with initial condition $(u_0,v_0)$ satisfies
\begin{align} \|(u,v)(\cdot,t)\|_\infty \geq c\log(1+t), \quad \|(u,v)(\cdot,t)\|_1 \geq ct \qquad t \in [0,T), \label{LB}\end{align}
where $[0,T)$, with $T \in (0,\infty]$, is its maximal interval of existence. In particular, the homogeneous rest state $(0,0)$ in~\eqref{GRD} is nonlinearly unstable against small, exponentially localized perturbations from $X^\alpha_{\rho_E}$ with $\rho_E(x) = e^{x^2/M}$ and $M \geq 1$.
\end{theorem}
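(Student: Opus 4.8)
The plan is to exploit the sign structure of~\eqref{CAS2}: with $u_0,v_0\ge0$ and source terms $v^2,u^2$ that are squares, Duhamel's formula $u(\cdot,t)=S_1(t)u_0+\int_0^tS_1(t-s)\bigl(v(\cdot,s)^2\bigr)\de s$ (and symmetrically for $v$), where $S_i(t)=e^{t(d_i\partial_x^2+c_i\partial_x)}$ is the positivity-preserving Gaussian semigroup, immediately gives $u,v\ge0$ on $[0,T)\times\R$. Since the system is invariant under swapping $(u,v)$ together with $(d_1,c_1)\leftrightarrow(d_2,c_2)$, we may assume $u_0\not\equiv0$, and after a translation in $x$ that $u_0\ge\eta>0$ on $[-\delta_0,\delta_0]$ for some $\eta,\delta_0>0$. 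Bounding the Gaussian integral of $S_1(t)$ against this indicator from below by a single Gaussian then gives a drifting-Gaussian lower bound $u(x,t)\ge\kappa\,\Theta(x,t)$ for $t\ge1$, with $\Theta(x,t)=(1+t)^{-1/2}e^{-(x+c_1t)^2/(K(1+t))}$ and suitable $\kappa,K>0$.

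Next I would feed this through the Duhamel formula for $v$ once. A direct convolution of Gaussians shows that $S_2(t-s)\bigl[\Theta(\cdot,s)^2\bigr](x)$ is bounded below by $c'\bigl((1+s)(1+t)\bigr)^{-1/2}$ times a Gaussian in $x$ centred at $-c_2t+(c_2-c_1)s$ of width $\asymp\sqrt{1+t}$. When $c_1\ne c_2$ this centre sweeps, as $s$ runs over $[1,t]$, an interval of length $\asymp|c_1-c_2|t$; restricting the $s$-integral to a window of width $\asymp\sqrt{1+t}$ about the zero $s_*=s_*(x,t)=(x+c_2t)/(c_2-c_1)$ of the exponent then yields the graded ``plume'' lower bound $v(y,s)\ge c''\bigl(1+s_*(y,s)\bigr)^{-1/2}$, valid for $s_*(y,s)=(y+c_2s)/(c_2-c_1)\in[\tilde c\sqrt{1+s},\,s]$. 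In particular $v(\cdot,s)\ge c''(1+s)^{-1/2}$ on a spatial interval of length $\asymp|c_1-c_2|s$, so $\|v(\cdot,s)\|_2^2\ge c_0>0$ for all large $s$. The $L^1$-estimate now follows: it is vacuous unless $(u_0,v_0)\in L^1(\R)^2$, and otherwise, using $\frac{\de}{\de t}\int_\R u(\cdot,t)\,\de x=\|v(\cdot,t)\|_2^2$ together with the monotonicity $\frac{\de}{\de t}\bigl(\|u\|_1+\|v\|_1\bigr)\ge0$ and integrating in time, one obtains $\|(u,v)(\cdot,t)\|_1\ge ct$ on all of $[0,T)$ after shrinking $c$ to absorb an initial compact interval.

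For the $L^\infty$-estimate I would iterate once more, feeding the plume bound into the Duhamel formula for $u$ and evaluating at $x=-c_1t$; there the change of variables $y\mapsto s_*$ converts the kernel into a Gaussian in $s_*-s$ and leaves the amplitude $v(\cdot,s)^2\ge(c'')^2(1+s_*)^{-1}$. Keeping only $s\in[s_0,\sqrt t]$ and $s_*\in[\tilde c\sqrt{1+s},s]$ one arrives at
\[ u(-c_1t,t)\ \gtrsim\ t^{-1/2}\int_{s_0}^{\sqrt t}\Bigl(\int_{\tilde c\sqrt{1+s}}^{\,s}\frac{\de s_*}{1+s_*}\Bigr)\de s\ \gtrsim\ t^{-1/2}\int_{s_0}^{\sqrt t}\log(1+s)\,\de s\ \gtrsim\ \log(1+t), \]
so that $\|(u,v)(\cdot,t)\|_\infty\ge c\log(1+t)$ for large $t$, with the remaining compact interval (and, if $T<\infty$, the behaviour near $T$) dealt with by continuity and the $L^\infty$-blow-up of the maximal classical solution. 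I expect this second iteration to be the main obstacle: a crude treatment of the $y$-integral produces only a bounded lower bound, and the logarithm survives only if one keeps the full graded plume — including its weak $(1+t)^{-1/4}$ amplitude near the $v$-front — so that its square $(1+s_*)^{-1}$ gets integrated over a range reaching up to $s_*\asymp s$, which is possible precisely because $c_1\ne c_2$ spreads the quadratic source over a set of length $\asymp t$.

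It remains to treat the degenerate case $c_1=c_2$, where no plume forms. Running the same two-step scheme in the co-moving frame $x\mapsto x-c_1t$ — in which the convolved Gaussians stay centred at the origin — gives $v(\cdot,t)\ge c'$ on an interval of width $\asymp\sqrt{1+t}$ after the first iteration and $u(0,t)\ge c(1+t)$ after the second, whence both parts of~\eqref{LB} hold a fortiori; alternatively one may invoke the known finite-time blow-up of this Fujita-type system, which renders $[0,T)$ bounded and~\eqref{LB} trivial there. Finally, the nonlinear-instability assertion is immediate: for any fixed $M\ge1$ and any compactly supported $(\phi_0,\psi_0)\in X^\alpha_{\rho_E}\setminus\{0\}$ with $\phi_0,\psi_0\ge0$, the rescaled data $\delta(\phi_0,\psi_0)$ have $X^\alpha_{\rho_E}$-norm tending to $0$ as $\delta\to0$, while by~\eqref{LB} the corresponding solution satisfies $\sup_{t\in[0,T_\delta)}\|(u,v)(\cdot,t)\|_\infty=\infty$ for every $\delta>0$ — through $\log(1+t)\to\infty$ if $T_\delta=\infty$, and through blow-up if $T_\delta<\infty$.
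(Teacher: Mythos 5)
Your proposal is correct in its essentials and follows the same overall strategy as the paper: positivity from Duhamel's formula, an initial drifting-Gaussian lower bound on $u$, one iteration producing a ``plume'' for $v$ that sweeps a region of length $\asymp|c_1-c_2|t$, a second iteration extracting the logarithm for $u$, and a separate (easier) treatment of $c_1=c_2$. The execution differs in three ways worth noting. First, the paper keeps the $v$-lower bound in integral form (the analogue of your plume is the exact Gaussian integral~\eqref{LB1}) and squares it via Jensen's inequality inside Duhamel's formula, evaluating everything with the identity~\eqref{integralid1}; you instead collapse the plume to the pointwise bound $v\gtrsim(1+s_*)^{-1/2}$ on $s_*\in[\tilde c\sqrt{1+s},s]$ before squaring, which avoids Jensen at the cost of discarding the Gaussian tails — this is legitimate and arguably more transparent, and the two routes extract the logarithm from slightly different regions of the $(s,s_*)$-domain (the paper from $s\in[\sqrt t,t]$ via $\int\de s/(1+s)$, you from $s\le\sqrt t$ via the inner $\int\de s_*/(1+s_*)$). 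Second, your $L^1$-bound via $\|u(\cdot,t)\|_1\ge\int_0^t\|v(\cdot,s)\|_2^2\,\de s$ is a genuinely different and shorter argument than the paper's (which integrates the pointwise lower bound~\eqref{keyLB} over $x$); just replace the formal identities $\tfrac{\de}{\de t}\int u=\|v\|_2^2$ and $\tfrac{\de}{\de t}(\|u\|_1+\|v\|_1)\ge0$ — which presuppose integrability and vanishing boundary terms — by the rigorous inequality obtained from Duhamel's formula, Tonelli for nonnegative integrands, and mass conservation of the positive semigroup; the same observation ($\|u(\cdot,t)\|_1\ge\|S_1(t)u_0\|_1=\|u_0\|_1$) shows the claim is trivially true, rather than ``vacuous'', when $u_0\notin L^1$. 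Third, your plume even gives $\|v(\cdot,s)\|_2^2\gtrsim\log(1+s)$, so this route in fact yields the slightly stronger bound $\|u(\cdot,t)\|_1\gtrsim t\log(1+t)$ for $c_1\ne c_2$. With these small repairs the argument is complete.
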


In~\cite{ESCHER} it is shown that any nontrivial, nonnegative solution to~\eqref{CAS2} blows up in finite time in the case $d_1 = d_2$ and $c_1 = c_2$. We employ similar methods as in~\cite{ESCHER} to prove Theorem~\ref{mainresult3} for general velocities $c_1,c_2 \in \R$ and diffusion coefficients $d_1,d_2 > 0$. Thus, we iteratively feed pointwise lower bounds on the solution into Duhamel's formula and estimate from below using Jensen's inequality. These lower bounds are no longer diffusive Gaussians as in~\cite{ESCHER} but of the type~\eqref{drag}.

\subsection{Admissible relevant and marginal nonlinearities}

All in all, in the case of different velocities $c_1 \neq c_2$, nonlinearities consisting of mix-terms and irrelevant terms in~\eqref{GRD} do not alter the temporal decay dictated by the linear system~\eqref{GRDLIN} for solutions with small, localized initial conditions; thus, yielding nonlinear stability of the rest state $(u,v) = (0,0)$ in~\eqref{GRD}, see Theorem~\ref{mainresult2}. On the other hand, relevant or marginal nonlinearities in~\eqref{GRD} that are not of mix-type, even if they are couplings between the $u$- and $v$-component, can lead to growth of small, localized initial data; thus, yielding instability of the rest state $(u,v) = (0,0)$, see Theorem~\ref{mainresult3} but also Remark~\ref{scalar}.

The question which relevant or marginal nonlinear terms yield stability and which can lead to instability of the rest state in~\eqref{GRD} is rather subtle. In the scalar setting, it suffices to determine the leading-order coefficient of the power series expansion of the nonlinearity, see Remark~\ref{scalar}. On the contrary, we demonstrate that, in the case of multiple components, it is insufficient to look at the leading-order (or most dangerous) term in the nonlinearity; in general, \emph{other} (higher-order) terms need to be taken into account. As an example, we consider the system
\begin{align}
\begin{split}
u_t &= d_1u_{xx} + c_1 u_x + \alpha uv + \beta u^3,\\
v_t &= d_2v_{xx} + c_2 v_x + \gamma \left( u^2\right)_x,
\end{split} \qquad t \geq 0, x \in \R, \label{CAS3}
\end{align}
with coefficients $\alpha,\beta, \gamma \in \R$ and $c_1 \neq c_2$. Intuitively, one expects that the mix-term $\alpha uv$ is harmless, since the components propagate with different velocities. One is then inclined to neglect this seemingly unimportant mix-term such that the $u$-component in~\eqref{CAS3} decouples. We arrive at the scalar system $u_t =  d_1u_{xx} + c_1 u_x + \beta u^3$ in which the sign of $\beta$ determines whether solutions exists globally and decay over time or blow up in finite time, see Remark~\ref{scalar}. Yet, the following results shows this formal way of reasoning leads to incorrect conclusions.

\begin{theorem} \label{mainresult4}
Consider for $M \geq 1$ the exponential weight $\rho_E(x) = e^{x^2/M}$. Suppose $d_i > 0$, $c_1 \neq c_2$ and the sign condition
\begin{align}\beta - \frac{\gamma \alpha}{c_2 - c_1} < 0,\label{expreta}\end{align}
holds for the coefficients $\alpha,\beta,\gamma,c_i,d_i \in \R$ in~\eqref{CAS3}. Then, there exists $M_0 \geq 1$ such that, for all $M \geq M_0$ and $\epsilon > 0$, there exists a $\delta > 0$ such that~\eqref{GRD} together with the initial condition $(u_0,v_0) \in H^2(\R,\R^2)$ satisfying
\begin{align} \left\|(u_0,v_0)\right\|_{H^2} + \left\|v_0\rho_E\right\|_\infty + \sqrt{M\pi}\ \left\|u_0\rho_E\right\|_\infty + \left\|u_0' \rho_E\right\|_\infty < \delta, \label{initcond4}\end{align}
has a classical global solution $(u,v) \in C^1\left([0,\infty),C^2(\R,\R^2)\right)$ satisfying
\begin{align} \|(u, v)(\cdot,t)\|_\infty  \leq \frac{\epsilon}{\sqrt{1 + t}}, \qquad \|(u, v)(\cdot,t)\|_1 \leq \epsilon, \qquad t \geq 0. \label{temporaldec3}\end{align}
\end{theorem}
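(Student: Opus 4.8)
The plan is to perform a near-identity change of variables that eliminates the dangerous cubic term $\beta u^3$ by trading it, modulo irrelevant remainders, against the combination $\alpha uv$ and $\gamma(u^2)_x$; the sign condition~\eqref{expreta} is exactly the condition under which the resulting effective cubic coefficient is negative, so that the transformed system falls into the scope of Theorem~\ref{mainresult2} (or a mild variant of it). Concretely, observe that the $v$-equation in~\eqref{CAS3} is \emph{linear} in $v$: writing it as $v_t = d_2 v_{xx} + c_2 v_x + \gamma(u^2)_x$, we may solve for $v$ via Duhamel in terms of $u$, and the leading contribution of $v$ on the fast time scale is, to leading order, $v \approx \frac{\gamma}{c_1 - c_2}(u^2)_x + (\text{higher order})$ — this is the ``quasi-steady'' balance obtained because the advection terms dominate the interaction when $c_1 \neq c_2$. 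Substituting this into the mix-term $\alpha uv$ produces $\frac{\gamma\alpha}{c_1 - c_2} u(u^2)_x = \frac{\gamma\alpha}{3(c_1-c_2)}(u^3)_x$, a term of Burgers'/marginal type, \emph{plus} a genuine cubic contribution whose sign is governed by $\beta - \frac{\gamma\alpha}{c_2 - c_1}$. I would make this rigorous not by a quasi-steady approximation but by an exact substitution $v = w + \frac{\gamma}{c_1 - c_2}(u^2)_x$ (or an appropriately mollified/antiderivative version to keep derivative counts under control), and then compute the equations satisfied by $(u, w)$.

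The key steps, in order: (1) Introduce the new variable $w := v - \psi(u)$ where $\psi$ is chosen so that the worst cubic resonance cancels; a natural first guess is $\psi(u) = \tfrac{\gamma}{c_1-c_2}(u^2)_x = \tfrac{2\gamma}{c_1-c_2} u u_x$, which is why the initial-data condition~\eqref{initcond4} controls $\|u_0'\rho_E\|_\infty$ and places $(u_0,v_0)$ in $H^2$ — one needs one extra derivative of $u$ to define $w$ and to run the energy/pointwise estimates. (2) Derive the transformed system for $(u,w)$: the $u$-equation becomes $u_t = d_1 u_{xx} + c_1 u_x + \alpha u w + \big(\beta - \tfrac{\gamma\alpha}{c_2-c_1}\big)u^3 + (\text{terms that are irrelevant or of mix-type in }(u,w))$, and the $w$-equation becomes a reaction-diffusion-advection equation driven by $u$ with only irrelevant/mix-type nonlinearities (the substitution introduces terms like $u_{xx}u_x$, $u u_{xxx}$, etc., all of which carry at least three powers of $u$ and derivatives, hence are irrelevant in the classification of the excerpt, modulo the need to integrate by parts to avoid third derivatives). (3) Check that the structural hypotheses of Theorem~\ref{mainresult2} — or rather a version allowing the marginal term $\tfrac{\gamma\alpha}{3(c_1-c_2)}(u^3)_x$ and the cubic $(\beta - \tfrac{\gamma\alpha}{c_2-c_1})u^3$ with negative coefficient — are satisfied by the $(u,w)$ system; here one invokes the scalar Burgers-type result philosophy (Remark~\ref{scalar}: $\alpha u^3$ with $\alpha \leq 0$ plus $\beta u u_\xi$ is controllable) combined with the mix-term machinery of Theorem~\ref{mainresult1}/\ref{mainresult2} for the coupling to $w$. (4) Run the pointwise nonlinear iteration scheme on $(u,w)$, using a Gaussian ansatz with speed $c_1$ for $u$ and the appropriate (Gaussian-with-speed-$c_2$ plus drag-type) ansatz for $w$, together with an $H^2$ energy estimate to close the loss-of-derivative coming from the $(u^3)_x$-type and substitution-generated terms. (5) Translate the decay of $(u,w)$ back to $(u,v)$ via $v = w + \psi(u)$; since $\psi(u)$ is quadratic in $u$ with one derivative it decays at rate $t^{-1}$ or faster, so the $t^{-1/2}$ rate and $L^1$ bound in~\eqref{temporaldec3} survive (one needs the pointwise Gaussian bound on $u_x$ as well, which the iteration on the $H^2$-regularized scheme or a parabolic smoothing estimate provides).

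The main obstacle I anticipate is \textbf{handling the marginal term $(u^3)_x$ together with the loss of derivatives introduced by the substitution}. Unlike the clean situation of Theorems~\ref{mainresult1}--\ref{mainresult2}, where all nonlinear couplings are irrelevant or mix-type and a pure pointwise iteration in $X^\alpha_{\rho_E}$ closes, here the transformed $u$-equation contains a genuinely marginal Burgers'-type term whose good sign must be exploited — this is presumably why the statement requires $H^2$ initial data and an energy-type norm in~\eqref{initcond4}, rather than the purely pointwise-localized data of the earlier theorems. I would therefore expect the proof to couple a pointwise iteration (for the Gaussian spatio-temporal profile, exploiting $c_1 \neq c_2$ to kill the $uw$ mix-term) with an $L^2$-based energy estimate in $H^2$ (to absorb $uu_{xx}$, $u_xu_{xx}$, $(u^3)_x$ after integration by parts, using $\beta - \tfrac{\gamma\alpha}{c_2-c_1} < 0$ to get a damping contribution), and the delicate point will be making these two estimates compatible on overlapping bootstrap intervals — in particular verifying that the energy estimate does not destroy the $t^{-1/2}$ pointwise decay and that the substitution $\psi(u)$ does not reintroduce uncontrolled growth in either norm. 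A secondary technical nuisance is that $\psi(u) = \tfrac{2\gamma}{c_1-c_2}uu_x$ is not small merely because $u_0$ is small in sup-norm; one needs the weighted bound on $u_0'$ in~\eqref{initcond4} precisely to guarantee $w_0 = v_0 - \psi(u_0)$ is an admissible small datum for the transformed system.
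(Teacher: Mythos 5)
Your overall strategy---a normal form / slaving transformation that trades the Burgers' coupling and the mix-term against the cubic, reducing to the good-sign case---is exactly the paper's strategy, but your specific substitution is off by one spatial derivative, and this is fatal to the argument. The quasi-steady balance in the $v$-equation reads $(c_1-c_2)v_x \approx \gamma (u^2)_x$ (since $v$ slaved to $u$ satisfies $v_t \approx c_1 v_x$), which upon integration gives $v \approx \tfrac{\gamma}{c_1-c_2}\,u^2$, \emph{not} $v \approx \tfrac{\gamma}{c_1-c_2}(u^2)_x$. The paper accordingly sets $z = v + \tfrac{\gamma}{c_2-c_1}u^2$ (in the frame $\zeta = x+c_1t$), which (i) exactly removes $\gamma(u^2)_x$ from the $v$-equation, leaving only irrelevant remainders $(u^2)_{\zeta\zeta}$, $u_\zeta^2$, $u^2z$, $u^4$ controlled by an $H^2$ damping estimate, and (ii) turns the mix-term into $\alpha uv = \alpha uz - \tfrac{\gamma\alpha}{c_2-c_1}u^3$, so that the cubic coefficient becomes $\beta - \tfrac{\gamma\alpha}{c_2-c_1} = -\mu < 0$ by~\eqref{expreta}. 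With your $\psi(u)=\tfrac{2\gamma}{c_1-c_2}uu_x$ neither happens: the transport mismatch of $uu_x$ produces $\gamma(u^2)_{xx}$ rather than $\gamma(u^2)_x$, so the Burgers' term is not cancelled, and $\alpha u\psi(u) = \tfrac{\gamma\alpha}{3(c_1-c_2)}(u^3)_x$ is a pure divergence with no genuine $u^3$ contribution---so the sign condition~\eqref{expreta} never enters and the dangerous term $\beta u^3$ (with possibly $\beta>0$) survives untouched. Your own step (2) silently assumes the outcome of the derivative-free substitution, which is inconsistent with the $\psi$ you defined.

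A second, independent gap: even after the correct transformation, the resulting cubic $-\mu u^3$ with $\mu>0$ is \emph{marginal}, and neither an $L^2$ energy estimate (which only gives a nonpositive contribution $-\mu\|u^2\|_2^2$, hence boundedness but no decay rate) nor an appeal to the "scalar philosophy" of Remark~\ref{scalar} suffices to preserve the $t^{-1/2}$ decay inside a pointwise iteration. The paper handles this by decomposing $u = \sigma(\cdot,t)A(t) + R$ with $A(t)=\hat u(0,t)$ and $\hat R(0,t)=0$, deriving the ODE $A' = -\tfrac{\nu}{1+t}A^3 + \dots$ with $\nu>0$, which yields the logarithmic gain $|A(t)| \lesssim (\log(1+t))^{-1/2}$ needed to close the iteration against the cubic, while the vanishing of $\hat R$ at $k=0$ gives extra algebraic decay for the remainder. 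This mechanism is absent from your outline. Your remaining ingredients (the $H^2$ damping estimate for the substitution-generated derivative terms, the pointwise treatment of the mix-term $\alpha uz$ exploiting $c_1\neq c_2$, and the role of~\eqref{initcond4}) are in line with the paper.
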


Thus, the seemingly unimportant mix-term $\alpha uv$ and Burgers' term $\gamma (u^2)_x$ cannot be neglected. In fact, these terms can compensate for the cubic term $\beta u^3$. Indeed, in contrast to the scalar setting, we can establish global existence and decay of small, localized initial data even for $\beta > 0$.

In the proof of Theorem~\ref{mainresult4} we first apply a normal form transform to remove the Burgers' term $\gamma (u^2)_x$ from the $v$-equation in~\eqref{CAS3}. The normal form transform introduces higher order terms to the $v$-equation which can be controlled via damping estimates by the $L^2$-norm of the solution and the $H^2$-norm of the initial data. In addition, we employ pointwise estimates to control the mix-term and we use a decomposition in Fourier space into short and long-wavelength modes to deal with the cubic term. Again, we note that our proof yields more detailed (pointwise) upper bounds than the ones provided in~\eqref{temporaldec3}; we refer the interested reader to~\S\ref{sec:proofMR4}.

It is worthwhile to emphasize that the occurrence of the difference $c_2 - c_1$ in~\eqref{expreta} underlines the fact that velocities can be decisive for the long-term dynamics. Furthermore, the expression~\eqref{expreta} suggests that, in the presence of quadratic mix-terms, a \emph{coupling} of Burgers'-type might affect the temporal decay of small, localized initial data even if the `dangerous' cubic term $\beta u^3$ is absent, whereas, in the scalar setting, a Burgers' term does not alter the temporal decay (only the limiting profile), as described in~\cite{BKL}.

In this paper, system~\eqref{CAS3} serves as an example to illustrate that the \emph{full} nonlinearity can be decisive for the global behavior of small solutions, instead of just the leading-order (or `most dangerous') term of the nonlinearity. However, its importance goes beyond the current setting: we expect that system~\eqref{CAS3} governs the critical dynamics in the stability analysis of wave trains at the Eckhaus boundary, see~\S\ref{sec:discussion}.

\subsection{Set-up}

This paper is structured as follows. First we establish local existence and uniqueness of solutions to~\eqref{GRD} in~\S\ref{sec:local}. In~\S\ref{sec:ill} we introduce the method of pointwise estimates in a simple setting and we illustrate how this method is employed to exploit differences in velocities in the proofs of our main results. The proofs of our main results, Theorems~\ref{mainresult1} and~\ref{mainresult2}, are then provided in~\S\ref{sec:proofMIX} and~\S\ref{sec:proofMR2}, respectively. Subsequently, the proof of Theorem~\ref{mainresult3}, showing growth of small initial data in system~\eqref{CAS2}, can be found in~\S\ref{sec:proofgrowth}, whereas the analysis of system~\eqref{CAS3} and the proof of Theorem~\ref{mainresult4} are the contents of~\S\ref{sec:proofMR4}. Finally, we comment on possible applications and future extensions of our work in~\S\ref{sec:discussion}.

\subsection{Other methods to prove global existence of small solutions in reaction-diffusion systems} \label{overview}

The problem of global existence of (small) solutions to nonlinear evolution equations is classical and various techniques have been developed over the past decades to address this issue. For reaction-diffusion systems on unbounded domains the first results~\cite{FUJI,FUJI2,WEISS} are obtained in the setting of the nonlinear heat equation $u_t = u_{xx} + f(u)$, where $f \colon \R \to \R$ is an irrelevant nonlinearity, typically $f(u) = u^p$. In these early papers global existence is established by iterative estimates of the linear and nonlinear terms in Duhamel's formula. In~\cite{FUJI,FUJI2} such iterative estimates are obtained using time-dependent Gaussian weights, whereas in~\cite{WEISS} the estimates are established in $L^q$-spaces. In essence, the method of pointwise estimates, used in this paper, also relies on spatio-temporal weights and the analysis in~\cite{FUJI,FUJI2} can be regarded as an early version.

It was observed in~\cite{SCHONB} that global existence and temporal decay can be established using entropy inequalities in systems of conservation laws, i.e.~in reaction-diffusion systems with nonlinearities of the form $(f(u))_x$. We emphasize that marginal Burgers'-type terms of the form $(u^2)_x$ can be included in the nonlinearity. Nowadays, there is a vast literature, see for instance~\cite{DESV,PIER} and references therein, on global existence in reaction-diffusion systems in which entropy or mass can be controlled, which sometimes even leads to a gradient structure~\cite{MIELG}.

Global existence of small solutions in general reaction-diffusion systems with irrelevant nonlinearities $f(u,u_x,u_{xx})$ was first obtained in~\cite{KLAI1} using energy estimates and the abstract Nash-Moser-H\"ormander iteration scheme. This result was improved in~\cite{PONC,ZHENG} by combining the $L^2$-energy estimates with weighted a priori estimates in $L^q$-space. The assumptions on the (irrelevant) nonlinearities were further relaxed in~\cite{CUIS}.

The first global existence results with relevant nonlinearities have been developed in the setting of the nonlinear heat equation with absorption $u_t = u_{xx} - u^p$ with $p > 1$. Using comparison principles one readily obtains that solutions decay over time~\cite{GMIR}. In fact, such comparison or maximum principles can be used to prove that solutions converge in the long time limit to self-similar solutions~\cite{GALAK,GMIR,KAMPEL}.

The renormalization group (RG) method~\cite{BK92,BKL} transforms the long time limit to a fixed time problem through an iterative scaling process, which reveals the large time (self-similar) asymptotics of solutions. In contrast to comparison principles, an extension of the RG method to \emph{systems} of reaction-diffusion equations is natural, see for instance~\cite{BK92,BKX}. In addition, the RG method has been successfully applied in multiple spatial dimensions~\cite{UEC9} and to reaction-diffusion systems with relevant or marginal nonlinear terms~\cite{BKL}. Since the renormalization procedure takes the spatio-temporal dynamics of solutions into account, we expect that, instead of the method of pointwise estimates, the RG method could also have been used in this paper to exploit the difference in velocities in~\eqref{GRD}. However, since our main interest is global existence and temporal decay of small solutions, instead of their large time asymptotics, we refrain from using (the technically more involved) RG method.

\section{Local existence and uniqueness} \label{sec:local}

Local existence and uniqueness of solutions in semilinear parabolic problems with initial data in spaces of bounded, H\"older continuous functions is well-established, see for instance~\cite{LUN}. Local existence and uniqueness has been verified for the current case of reaction-diffusion-advection systems on the real line in~\cite[Section 11.3]{ZUH} using the so-called parametrix method. In the following proposition we collect the facts which are needed to prove the main results of this paper.

\begin{proposition} \label{proplocal}
Let $\alpha \in (0,1)$. Suppose the coefficients in~\eqref{GRD} satisfy $d_i > 0$, $c_i \in \R$ and $f_i,g_i \in C^{2,\alpha}(\R^2,\R)$ with $f_i(0), g_i(0) = 0$ for $i = 1,2$. Let $(u_0,v_0) \in L^\infty(\R,\R^2) \cap C^{0,\alpha}(\R,\R^2)$. Then, there exists a classical solution $(u,v) \in C^{1,\frac{\alpha}{2}}\left([0,T),C^{2,\alpha}(\R,\R^2) \cap L^\infty(\R,\R^2)\right)$ to~\eqref{GRD} on a maximal time interval $[0,T)$, with $T \in (0,\infty]$, having initial condition $(u_0,v_0)$. If $T < \infty$, then it holds
\begin{align}\lim_{t \uparrow T} \|(u,v)(\cdot,t)\|_\infty = \infty.\label{blowuplinfty}\end{align}
\end{proposition}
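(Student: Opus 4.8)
The plan is to recast~\eqref{GRD} as a fixed-point problem for Duhamel's formula, solve it by a contraction argument in a space of bounded, uniformly continuous functions, and then bootstrap to a classical solution; this is the standard route for semilinear parabolic problems, and for the present advection-diffusion system the assertions are essentially contained in~\cite{LUN} and in the parametrix construction of~\cite[Section 11.3]{ZUH}. I would first record the properties of the linear part $\mathcal{L} := D\partial_x^2 + C\partial_x$ needed below. The operator $\mathcal{L}$ generates a strongly continuous, analytic semigroup $\big(e^{t\mathcal{L}}\big)_{t \geq 0}$ on $BUC(\R,\R^2)$, acting componentwise as convolution against the shifted heat kernels $\tfrac{1}{\sqrt{4\pi d_i t}}\,e^{-(x + c_i t)^2/(4 d_i t)}$. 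From the explicit kernels one reads off the smoothing bounds
\begin{align*}
\big\|e^{t\mathcal{L}} z\big\|_\infty \leq C\|z\|_\infty, \qquad \big\|\partial_x e^{t\mathcal{L}} z\big\|_\infty \leq C\, t^{-1/2}\|z\|_\infty, \qquad t \in (0,1],
\end{align*}
together with the analogous estimates $\|e^{t\mathcal{L}} z\|_{C^{0,\alpha}} \leq C t^{-\alpha/2}\|z\|_\infty$ and $\|\partial_x e^{t\mathcal{L}} z\|_{C^{0,\alpha}} \leq C t^{-(1+\alpha)/2}\|z\|_\infty$, all of whose time singularities are integrable near $t = 0$ since $\alpha \in (0,1)$.

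For the contraction, abbreviate $F := (f_1,f_2)$, $G := (g_1,g_2)$, and note that $L^\infty(\R,\R^2) \cap C^{0,\alpha}(\R,\R^2) \subset BUC(\R,\R^2)$. I would look for a fixed point of
\begin{align*}
\mathcal{T}[(u,v)](t) := e^{t\mathcal{L}}(u_0,v_0) + \int_0^t e^{(t-s)\mathcal{L}}\Big[F\big((u,v)(s)\big) + \partial_x G\big((u,v)(s)\big)\Big]\,\de s
\end{align*}
in the ball of radius $1$ about $t \mapsto e^{t\mathcal{L}}(u_0,v_0)$ inside $C\big([0,T], BUC(\R,\R^2)\big)$. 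Since $F, G \in C^{2,\alpha}$ vanish to first order at the origin, the associated Nemytskii operators are bounded and Lipschitz on bounded subsets of $BUC(\R,\R^2)$; combined with the smoothing bounds above (one integrates the factors $1$ and $(t-s)^{-1/2}$, both of which are harmless), this shows that for $T > 0$ sufficiently small, depending only on an upper bound for $\|(u_0,v_0)\|_\infty$, the map $\mathcal{T}$ is a self-map and a contraction on that ball. The Banach fixed-point theorem yields a mild solution $(u,v) \in C\big([0,T], BUC(\R,\R^2)\big)$ with $(u,v)(0) = (u_0,v_0)$, and a Gr\"onwall estimate shows it is the unique mild solution in $C\big([0,T], L^\infty(\R,\R^2)\big)$.

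Next, parabolic regularity upgrades the mild solution to a classical one. For $t > 0$ the smoothing bounds place $(u,v)(t)$ in $C^{0,\alpha}(\R,\R^2)$, and a standard bootstrap of Duhamel's formula together with interior parabolic Schauder estimates --- equivalently, optimal regularity for analytic semigroups in H\"older spaces, see~\cite{LUN}, or the parametrix method of~\cite[Section 11.3]{ZUH} --- yields $(u,v) \in C^{1,\alpha/2}\big((0,T), C^{2,\alpha}(\R,\R^2) \cap L^\infty(\R,\R^2)\big)$ solving~\eqref{GRD} pointwise, with the datum attained as $t \downarrow 0$; this is the meaning of the regularity statement in the proposition. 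Conversely, any classical solution in the stated class is in particular a mild solution in $C\big([0,T],L^\infty(\R,\R^2)\big)$, so uniqueness follows from the Gr\"onwall estimate above.

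Finally, concatenating local solutions produces a solution on a maximal interval $[0,T)$, $T \in (0,\infty]$. The crucial point is that the local existence time constructed above depends only on an upper bound for the $L^\infty$-norm of the datum at the starting time. Hence, if $T < \infty$ and there were a sequence $t_n \uparrow T$ with $\sup_n \|(u,v)(\cdot,t_n)\|_\infty < \infty$, one could continue the solution past each $t_n$ by a fixed positive amount of time, and therefore past $T$, contradicting maximality; this forces $\lim_{t\uparrow T}\|(u,v)(\cdot,t)\|_\infty = \infty$. I expect the only real (but routine) obstacle to lie in the second and third steps --- checking that the Nemytskii operators behave well between the relevant spaces and carefully tracking the time singularities produced when $\partial_x$ hits the semigroup, so that they stay integrable near $s = 0$, which is exactly where $\alpha \in (0,1)$ enters, and then executing the bootstrap into the parabolic H\"older class $C^{1,\alpha/2}((0,T),C^{2,\alpha})$.
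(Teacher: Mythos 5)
Your proposal is correct, but it follows a genuinely different route from the paper, which simply quotes the parametrix-based local existence theory of~\cite[Corollary 11.4]{ZUH} rather than rebuilding it. The more substantive difference lies in the blow-up criterion~\eqref{blowuplinfty}. You obtain it from the observation that the contraction time in your fixed-point argument depends only on an upper bound for the $L^\infty$-norm of the datum, so that a bounded subsequence $t_n \uparrow T$ would allow continuation by a uniform amount past $T$; note that this argument directly yields the full limit in~\eqref{blowuplinfty} (ruling out bounded subsequences), not merely $\limsup = \infty$. The paper instead argues by assuming $\|(u,v)(\cdot,t)\|_\infty$ bounded on all of $[0,T)$, rewriting the nonlinear system as a \emph{linear} system $z_t = Dz_{xx} + (G(x,t)z)_x + F(x,t)z$ with bounded coefficients (using $f_i(0), g_i(0) = 0$ and the integral mean value theorem), and invoking the Green's function bounds of~\cite[Proposition 11.3]{ZUH} for that frozen-coefficient problem to extend $z$ continuously up to $t = T$ into $L^\infty \cap C^{0,\alpha}$, whence local existence restarts the solution past $T$. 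Your approach is self-contained and makes the dependence of the existence time on the data explicit, which is exactly what the continuation argument needs; the paper's approach reuses machinery from~\cite{ZUH} that is needed elsewhere anyway and avoids redoing the fixed-point and Nemytskii estimates, at the cost of the extra linearization step. Both arguments rely on uniqueness to glue the continued solution to the original one, which you correctly supply via the Gr\"onwall estimate for mild solutions.
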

\begin{proof}
By~\cite[Corollary 11.4]{ZUH} there exists a solution $(u,v) \in C^{1,\frac{\alpha}{2}}\left([0,T),C^{2,\alpha}(\R,\R^2) \cap L^\infty(\R,\R^2)\right)$ to~\eqref{GRD} on a maximal interval $[0,T)$, with $T \in (0,\infty]$, having initial condition $(u_0,v_0)$. Assume by contradiction that $T < \infty$ and $t \mapsto \|(u,v)(\cdot,t)\|_\infty$ is bounded on $[0,T)$. We observe that $z(x,t) = (u,v)(x,t)$ satisfies the linear system,
\begin{align}
z_t = Dz_{xx} + (G(x,t) z)_x + F(x,t) z, \label{linsys}
\end{align}
with $D := \text{diag}(d_1,d_2)$ and use $f_i(0), g_i(0) = 0$ for $i = 1,2$ to write
\begin{align*} G(x,t) &:= \begin{pmatrix} c_1 & 0 \\ 0 & c_2\end{pmatrix} + \int_0^1 \begin{pmatrix} \partial_u g_1 \!&\! \partial_v g_1 \\ \partial_u g_2 \!&\! \partial_v g_2 \end{pmatrix}[\gamma z(x,t)] \de\gamma,\qquad F(x,t):= \int_0^1 \begin{pmatrix} \partial_u f_1 \!&\! \partial_v f_1 \\ \partial_u f_2 \!&\! \partial_v f_2 \end{pmatrix}[\gamma z(x,t)] \de\gamma.
\end{align*}
By~\cite[Proposition 11.3]{ZUH} the Green's function $G(x,y,t,s)$ associated to~\eqref{linsys} is continuous and differentiable with respect to $x$. Moreover, it enjoys the estimate
\begin{align*} \left\|D_x^j G(x,y,t,s)\right\| \leq Ct^{-\tfrac{j+1}{2}} e^{-\tfrac{(x-y)^2}{N(t-s)}}, \qquad x,y \in \R, T_0 \leq s \leq t < T, j = 0,1.\end{align*}
for some $C,N > 0$ and $T_0 \in (0,T)$. This estimate and the fact that $t \mapsto \|z(\cdot,t)\|_\infty$ is bounded on $[0,T)$ imply that
\begin{align*} z(x,t) = \int_\R G(x,y,t,T_0) z(y,T_0)\de y, \end{align*}
can be extended from $\R \times [T_0,T)$ to $\R \times [T_0,T]$ such that $z(\cdot,T)$ is bounded and continuously differentiable on $\R$. In particular, $z(\cdot,T)$ lies in $L^\infty(\R,\R^2) \cap C^{0,\alpha}(\R,\R^2)$ and can therefore be extended by~\cite[Corollary 11.4]{ZUH} to a solution $z(\cdot,t)$ in $L^\infty(\R,\R^2) \cap C^{2,\alpha}(\R,\R^2)$ on some interval $[0,T+\tau)$ with $\tau > 0$, which contradicts the maximality of $T$. Thus, the blow-up~\eqref{blowuplinfty} must hold if $T < \infty$.
\end{proof}

\section{Illustration of the main ideas} \label{sec:ill}

In this section we give a short introduction to the method of pointwise estimates and we explain how this method can be applied to exploit differences in velocities. In particular, we aim to illustrate in a simple setting how the estimates in the proofs of our main results arise. Thus, we consider the toy model
\begin{align}
\begin{split}
u_t &= d_1u_{xx} + c_1 u_x + u^4 + uv,\\
v_t &= d_2v_{xx} + c_2 v_x,
\end{split} \qquad t \geq 0, x \in \R, \label{TOY}
\end{align}
with $d_i > 0$ and $c_i \in \R$ with $c_1 \neq c_2$. We take small, exponentially localized initial data, i.e.~we take $u_0,v_0 \in L^\infty(\R)$ satisfying
\begin{align}\left|u_0(x)e^{\tfrac{x^2}{M}}\right|, \left|v_0(x)e^{\tfrac{x^2}{M}}\right| \leq \delta, \qquad x \in \R, \label{initb}\end{align}
for some $M > 0$, where $\delta > 0$ is sufficiently small. We assume local existence and uniqueness of a classical solution $(u(t),v(t))$ to~\eqref{TOY} with initial condition $(u_0,v_0)$.

We want to derive bounds on the solution $(u(t),v(t))$ yielding global existence and decay. Such bounds follow from iterative estimates using the Duhamel formulation (or variation of constants formula). Thus, integrating the $u$-equation in~\eqref{TOY} gives
\begin{align}
u(t) &= e^{\El t} u_0 + \int_0^t e^{\El (t-s)} \N(u(s),v(s))\de s, \label{Duhamelclas}
\end{align}
where $\El$ is the differential operator $\El = d_1\partial_{xx} + c_1 \partial_x$ and $\N$ denotes the nonlinearity $\N(u,v) = u^4 + uv$. With the aid of the Fourier transform one finds
\begin{align}
\left[e^{\El t} u_0\right](x) = \int_\R \frac{e^{-\tfrac{(x-y+c_1t)^2}{4d_1t}}}{\sqrt{4\pi d_1t}} u_0(x) \de y. \label{Elop}
\end{align}

To bound the linear and nonlinear terms in~\eqref{Duhamelclas} one can first use~\eqref{Elop} to compute (or bound) the operator norm of $e^{\El t} \colon X \to Y$, where $X$ and $Y$ are suitable function spaces, and then estimate $u(t)$ in the $Y$-norm via~\eqref{Duhamelclas} using bounds on $u_0$ and $\N(u(s),v(s))$ in the $X$-norm. Thus, one obtains
\begin{align}
\|u(t)\|_Y \leq \|e^{\El t}\|_{X \to Y} \|u_0\|_X + \int_0^t \|e^{\El(t-s)}\|_{X \to Y} \|\N(u(s),v(s))\|_X \de s. \label{opnorm}
\end{align}
For the nonlinear heat equation $u_t = u_{xx} + u^p$ with $p > 3$ suitable spaces are for instance $L^1(\R)$ and $L^\infty(\R)$, see~\cite{MSU} and~\cite[Section 14]{SUbook}. We note that the renormalization group method and the approaches using bounds in $L^q$-spaces, which are mentioned in~\S\ref{overview}, are based on estimates of the form~\eqref{opnorm}.

On the contrary, the method of pointwise estimates does not rely on the operator norm of $e^{\El t}$ between suitable function spaces. Instead, the functions $e^{\El t} u_0$ and $e^{\El (t-s)} \N(u(s),v(s))$ in~\eqref{Duhamelclas} are estimated \emph{pointwise} using~\eqref{Elop}. In the case of exponential weights, all these estimates utilize the integral identity
\begin{align}
\int_\R e^{-ay^2 + by + c} \de y = \frac{\sqrt{\pi} e^{\tfrac{b^2 + 4ac}{4a}}}{\sqrt{a}}, \label{integralid1}
\end{align}
with $a > 0$ and $b, c \in \R$, which follows from the standard Gaussian integral by completing the square.

To bound the linear term $e^{\El t} u_0$ in~\eqref{Duhamelclas}, we take $M \geq 4d_1$, plug the bound~\eqref{initb} into~\eqref{Elop} and use~\eqref{integralid1} to yield
\begin{align}
\left|\left[e^{\El t} u_0\right](x)\right| \leq \delta \int_{\R} \frac{e^{-\tfrac{(x-y+c_1t)^2}{Mt}-\tfrac{y^2}{M}}}{\sqrt{4 \pi d_1 t}}\de y
= \frac{\delta \sqrt{M} e^{-\tfrac{(x+c_1t)^2}{M (1+t)}}}{2 \sqrt{d_1 (1+t)}}. \label{linboundTOY}
\end{align}
Thus, without the presence of the nonlinear terms in~\eqref{TOY}, solutions with exponentially localized initial conditions decay as
\begin{align} |u(x,t)| \leq C \frac{e^{-\tfrac{(x+c_1t)^2}{M (1+t)}}}{\sqrt{1+t}}, \qquad |v(x,t)| \leq C \frac{e^{-\tfrac{(x+c_2t)^2}{M (1+t)}}}{\sqrt{1+t}}, \label{assump} \end{align}
where $C>0$ is some $x$- and $t$-independent constant.

The method of pointwise estimates now employs the bounds~\eqref{assump} as spatio-temporal weights in the nonlinear iteration. Thus, one assumes that~\eqref{assump} holds and obtains
\begin{align*}\left|\N(u(y,s),v(y,s))\right| \leq C\left(\frac{e^{-\tfrac{(y+c_1s)^2}{M (1+s)}}}{(1+s)^2} + \frac{e^{-\tfrac{(y+c_1s)^2}{M (1+s)} - \tfrac{(y+c_2s)^2}{M (1+s)}}}{1+s}\right),\end{align*}
where $C> 0$ is some $y$- and $s$-independent constant. We take $M \geq 8d_1$, use~\eqref{Elop} to plug the latter estimate into $e^{\El (t-s)} \N(u(s),v(s))$ and establish
\begin{align*}
\left|\left[e^{\El (t-s)} \N(u(s),v(s)) \right](x)\right| \leq C \int_{\R} \left(\frac{e^{-\tfrac{(x-y+c_1(t-s))^2}{M(t-s)} -\tfrac{(y+c_1s)^2}{M (1+s)}}}{\sqrt{4\pi d_1(t-s)}(1+s)^2} + \frac{e^{-2\tfrac{(x-y+c_1(t-s))^2}{M(t-s)}-\tfrac{(y+c_1s)^2}{M (1+s)} - \tfrac{(y+c_2s)^2}{M (1+s)}}}{\sqrt{4\pi d_1(t-s)}(1+s)}\right) \de y.
\end{align*}
Although the obtained integrals over $y$ look complicated at first sight, they are of the form~\eqref{integralid1} with constants $a,b$ and $c$ depending on $s,x$ and $t$, see Remark~\ref{mathematica}. Hence, using~\eqref{integralid1}, we compute
\begin{align}
\int_{\R} \frac{e^{-\tfrac{(x-y+c_1(t-s))^2}{M(t-s)} - \tfrac{(y + c_1s)^2}{M (1+s)}}}{\sqrt{4\pi d_1(t-s)}(1+s)^2} \de y &= \frac{\sqrt{M} e^{-\tfrac{(x+c_1t)^2}{M (1+t)}}}{\sqrt{d_1(1+t)}(1+s)^{3/2}}, \label{eval1}
\end{align}
and
\begin{align}
\int_{\R} \frac{e^{-2\tfrac{(x-y+c_1(t-s))^2}{M(t-s)}-\tfrac{(y+c_1s)^2}{M (1+s)} - \tfrac{(y+c_2s)^2}{M (1+s)}}}{\sqrt{4\pi d_1(t-s)}(1+s)} \de y = \frac{\sqrt{M} e^{- \tfrac{(x+c_1t)^2}{M(1+t)} - \tfrac{s^2 (t-s) (c_1-c_2)^2}{2 M (1+s) (1+t)} - \tfrac{(x + c_1t + s(c_2-c_1))^2}{M (1+t)}}}{2\sqrt{2 d_1(1+t)(1+s)}}. \label{latter}
\end{align}
We emphasize that, in order to obtain the exponentially decaying term $e^{- \frac{s^2 (t-s) (c_1-c_2)^2}{2 M (1+s) (1+t)}}$ on the right hand side of~\eqref{latter}, taking $M \geq 8d_1$, instead of $M \geq 4d_1$, is crucial. Indeed, one readily verifies that, without the $2$ before the coefficient $\tfrac{(x-y+c_1(t-s))^2}{M(t-s)}$ in~\eqref{latter}, such exponential decay in time is not achieved uniformly in space. Intuitively, this $2$ balances the \emph{two} exponentials coming from the pointwise bound on the mix-term $uv$, whereas in~\eqref{eval1} only one exponential arises when bounding the $u^4$-term.

To close the nonlinear iteration scheme and apply continuous induction, we want to recover our original assumption~\eqref{assump} after one iteration of Duhamel's formula. Thus, in addition to the linear term $e^{\El t} u_0$, we also want to bound the nonlinear term $\int_0^t e^{\El (t-s)} \N(u(s),v(s)) \de s$ in~\eqref{Duhamelclas} by an $x$- and $t$-independent multiple of the drifting Gaussian
\begin{align} \frac{e^{-\tfrac{(x+c_1t)^2}{M (1+t)}}}{\sqrt{1+t}}. \label{weight}\end{align}
Integrating~\eqref{eval1} over $s$ clearly yields such a bound, since the integral $\int_0^t (1+s)^{-3/2} \de s$ is uniformly bounded for $t \geq 0$. On the other hand, exploiting the difference in velocities, we establish
\begin{align*}
\begin{split}
\int_0^t \frac{e^{- \tfrac{s^2 (t-s) (c_1-c_2)^2}{2 M (1+s) (1+t)}}}{\sqrt{1+s}} \de s  &\leq C\left(\int_0^1 \de s + \int_{1}^{\frac{t}{2}} \frac{\sqrt{1+s}(1+t)}{s^2 (t-s)} \de s + \int_{\frac{t}{2}}^{t} \frac{e^{-\tfrac{t^2 (t - s) (c_1 - c_2)^2}{8 M (1 + t)^2}}}{\sqrt{1+t}} \de s\right),
\end{split}
\end{align*}
for some $t$-independent constant $C > 0$, where the right hand side is uniformly bounded for $t \geq 2$. It is not hard to see that the left hand side is also bounded for $t \in [0,2]$. So, we obtain
\begin{align*} \left|\int_0^t \left[e^{\El (t-s)} \N(u(s),v(s))\right](x) \de s\right| \leq C \frac{e^{-\tfrac{(x+c_1s)^2}{M (1+t)}}}{\sqrt{1+t}},
\end{align*}
for some $x$- and $t$-independent constant $C> 0$. Combining the latter with~\eqref{linboundTOY}, we have recovered our original assumption~\eqref{assump} after an iteration with Duhamel's formula. Hence, we can expect to close the nonlinear iteration scheme and apply continuous induction to establish global existence and Gaussian-like decay. We will make the latter rigorous in the proofs of our main results in the upcoming sections.

\begin{remark}{\upshape \label{remspatiotemp}
In the case of polynomially localized initial conditions, one obtains a different pointwise bound than~\eqref{linboundTOY} on the linear term $e^{\El t} u_0$ in Duhamel's formula. Therefore, also the chosen spatio-temporal weight is different for polynomially localized initial data, see~\S\ref{estexplinM} in the proof of Theorem~\ref{mainresult1}.

However, it is not only the pointwise bound on the linear term $e^{\El t} u_0$ that determines the spatio-temporal weight. For instance, the presence of a $v^4$-term in the $u$-component of~\eqref{TOY} would lead to the contribution
\begin{align*}
\int_{\R} \frac{e^{-\tfrac{(x-y+c_1(t-s))^2}{M(t-s)} - \tfrac{(y + c_2s)^2}{M (1+s)}}}{\sqrt{4\pi d_1(t-s)}(1+s)^2} \de y = \frac{\sqrt{M} e^{-\tfrac{(x+c_1t + s(c_2-c_1))^2}{M (1+t)}}}{\sqrt{d_1(1+t)}(1+s)^{3/2}},
\end{align*}
in $\left[e^{\El (t-s)}\N(u(s),v(s))\right](x)$. Due to this contribution, the integral $\int_0^t e^{\El (t-s)}\N(u(s),v(s)) \de s$ in~\eqref{Duhamelclas} can no longer be bounded by an $x$- and $t$-independent multiple of a drifting Gaussian~\eqref{weight}, see Figure~\ref{decaytypes}. Consequently, one cannot close the nonlinear iteration scheme and needs to adapt the spatio-temporal weight. Thus, our choice of spatio-temporal weight is inspired by the pointwise bounds on both the linear terms and the nonlinear terms. For instance, the presence of nonlinear coupling terms, which are not of mix-type, leads, in the proof of Theorem~\ref{mainresult2}, to an additional contribution in the chosen spatio-temporal weight.
}\end{remark}

\begin{remark}{\upshape \label{mathematica}
In the upcoming proofs, the pointwise estimation of $e^{\El (t-s)} \N(u(s),v(s))$ in~\eqref{Duhamelclas} relies, as in the above, on the integral identity~\eqref{integralid1} in the case of exponentially localized initial data. As additional contributions are introduced to the spatio-temporal weight in the proof of Theorem~\ref{mainresult2}, the expressions for $a,b$ and $c$ in~\eqref{integralid1} become more involved. However, since the integral identity~\eqref{integralid1} is standard, our calculations can be (and have been) verified using a symbolic computation program such as \textsf{Mathematica}.
}\end{remark}

\section{Proof of Theorem~\ref{mainresult1}} \label{sec:proofMIX}

In this proof, $C>1$ denotes a constant, which is independent of $\delta,x$ and $t$ and that will be taken larger if necessary.

\subsection{Plan of proof} \label{sec:planM}

Take $M_0 = \max\{16d_1,16d_2,1\}$ and let $M \geq M_0$ and $r \geq 3$. By Proposition~\ref{proplocal} there exists a classical solution $(u,v) \in C^{1,\frac{\alpha}{2}}\left([0,T),C^{2,\alpha}(\R,\R^2)\right)$ to~\eqref{GRD} on a maximal time interval $[0,T)$, with $T \in (0,\infty]$, having initial condition $(u_0,v_0) \in X_{\rho_i}^\alpha \subset C^{0,\alpha}(\R,\R^2) \cap L^\infty(\R,\R^2)$ for $i = E,A$. Therefore, the functions $\eta_E, \eta_A \colon [0,T) \to [0,\infty)$ given by
\begin{align*}
 \eta_E(t) = \sup_{\begin{smallmatrix} x \in \R \\ 0 \leq s \leq t \end{smallmatrix}} &\sqrt{1+s} \left(\left|u(x,s)\right|e^{\tfrac{(x+c_1s)^2}{M(1+s)}} + \left|v(x,s)\right|e^{\tfrac{(x+c_2s)^2}{M(1+s)}}\right),
 \end{align*}
and
\begin{align*}
 \eta_A(t) = \sup_{\begin{smallmatrix} x \in \R \\ 0 \leq s \leq t \end{smallmatrix}} &\left(\left|u(x,s)\right| \left[\frac{1}{\left(1 + |x+c_1s| + \sqrt{s}\right)^{r}} + \frac{e^{-\tfrac{(x+c_1s)^2}{M(1+s)}}}{\sqrt{1+s}}\right]^{-1} \right.\\
& \left. \qquad \qquad\qquad\qquad\qquad + \left|v(x,s)\right| \left[\frac{1}{\left(1 + |x+c_2s| + \sqrt{s}\right)^{r}} + \frac{e^{-\tfrac{(x+c_2s)^2}{M(1+s)}}}{\sqrt{1+s}}\right]^{-1}\right),
\end{align*}
are continuous. In addition, if $T < \infty$, then $\eta_A(t)$ and $\eta_E(t)$ must blow up as $t \uparrow T$ by~\eqref{blowuplinfty}. Our aim is to show that, if we have $\|(u_0,v_0)\|_{\rho_i} < \delta$ and $t \in [0,T)$ is such that $\eta_i(t)$ is bounded by the constant $r_0$ in Theorem~\ref{mainresult1}, then $\eta_i(t)$ satisfies an inequality of the form
\begin{align} \eta_i(t) \leq C\left(\delta + \eta_i(t)^2\right), \quad i = E,A. \label{etaest}\end{align}
Since $\eta_i$ must be continuous as long as it remains bounded, we can apply continuous induction using~\eqref{etaest}. Thus, taking $\delta < \min\{\frac{1}{4C^2},\frac{r_0}{2C}\}$, it follows $\eta_i(t) \leq 2C\delta \leq r_0$ for \emph{all} $t \geq 0$, which proves global existence, i.e.~$T = \infty$. Moreover, taking $\delta = \min\{\epsilon/(2C(1+\sqrt{M\pi})),1/(4C^2),r_0/(2C)\}$ it holds
\begin{align}\eta_i(t) \leq 2C\delta = \frac{\epsilon}{1+\sqrt{M\pi}} \leq \epsilon, \label{etaest2} \end{align}
for all $t \geq 0$, which yields the desired temporal decay in~\eqref{temporaldec} in the $L^\infty$-norm. The bound in~\eqref{temporaldec} of the $L^1$-norm follows from~\eqref{etaest2} after integration of the associated spatio-temporal weight:
\begin{align} \int_\R \frac{e^{-\tfrac{x^2}{M(1+s)}}}{\sqrt{1+s}}\de x = \sqrt{M \pi}, \qquad \int_\R \frac{1}{\left(1 + |x+c_1s| + \sqrt{s}\right)^{r}} \de x = \frac{2}{(r-1)\left(1 + \sqrt{s}\right)^{r - 1}} \leq 1, \label{L1integral}\end{align}
for $t \geq 0$. Finally,~\eqref{etaest2} proves the pointwise decay estimates~\eqref{pointdec} in the case of exponentially localized initial data.

Take $z(x,t) = (u,v)(x,t)$ and denote
\begin{align*} \theta(x,t) := \frac{e^{-\tfrac{(x+c_1t)^2}{4d_1t}}}{\sqrt{4\pi d_1t}}.\end{align*}
Integrating the $u$-equation in~\eqref{GRD} and applying integration by parts we obtain
\begin{align}
\begin{split}
u(x,t) &= \int_{\R} \theta(x-y,t) u_0(y)\de y\\ &\qquad + \int_0^t  \int_{\R} \left[\theta(x-y,t-s) f_1(z(y,s)) + \theta_x(x-y,t-s)g_1(z(y,s))\right]\de y\de s,
\end{split} \label{Duhamel}
\end{align}
for $x \in \R$ and $t \in [0,T)$. A similar integral formulation can be obtained for the $v$-component in~\eqref{GRD}. Our plan is to prove the key inequality~\eqref{etaest}, provided $\eta_i(t) \leq r_0$, by estimating the linear and nonlinear terms in the integral formulations for the $u$- and $v$-component.

\begin{remark} \label{spatiotemporalbounds1}
{\upshape
For algebraically localized initial data $(u_0,v_0) \in X_{\rho_A}^\alpha$ satisfying $\|(u_0,v_0)\|_{\rho_A} < \delta$,~\eqref{etaest2} yields
\begin{align*} \left|u(x,t)\right| &\leq \epsilon\left[\frac{1}{\left(1 + |x+c_1t| + \sqrt{t}\right)^{r}} + \frac{e^{-\tfrac{(x+c_1t)^2}{M(1+t)}}}{\sqrt{1+t}}\right], \quad \left|v(x,t)\right| \leq \epsilon\left[\frac{1}{\left(1 + |x+c_2t| + \sqrt{t}\right)^{r}} + \frac{e^{-\tfrac{(x+c_2t)^2}{M(1+t)}}}{\sqrt{1+t}}\right],
\end{align*}
for all $x \in \R$ and $t \geq 0$. Hence, the spatio-temporal decay of the solutions to~\eqref{GRD} is, as in~\eqref{pointdec}, componentwise controlled by drifting Gaussians, which exhibit the decay as predicted by the linear system, and by traveling algebraically localized correction terms, which exhibit faster temporal decay.
}\end{remark}

\subsection{Linear estimates} \label{estexplinM}

Using $M \geq 4d_1$ and taking exponentially localized initial conditions $(u_0,v_0) \in X_{\rho_E}^\alpha$ the linear term in~\eqref{Duhamel} enjoys the bound
\begin{align}
\left|\int_{\R}  \theta(x-y,t) u_0(y)\de y\right| &\leq C\|u_0\|_{\rho_E} \int_{\R} \frac{e^{-\tfrac{(x-y+c_1t)^2}{Mt}-\tfrac{y^2}{M}}}{\sqrt{t}}\de y \leq  C \delta \frac{e^{-\tfrac{(x+c_1 t)^2}{M(1+t)}}}{\sqrt{1+t}}.\label{linexp} \end{align}
For algebraically localized initial conditions $(u_0,v_0) \in X_{\rho_A}^\alpha$ we apply~\cite[Corollary 6.16]{JUN} and obtain
\begin{align}
\left|\int_{\R}  \theta(x-y,t) u_0(y)\de y\right| &\leq C\|u_0\|_{\rho_A} \int_{\R} \frac{e^{-\tfrac{(x-y+c_1t)^2}{Mt}}}{(1+|y|)^r \sqrt{t}}\de y \leq  C \delta \left[\frac{1}{\left(1 + |x+c_1t| + \sqrt{t}\right)^{r}} + \frac{e^{-\tfrac{(x+c_1t)^2}{M(1+t)}}}{\sqrt{1+t}}\right]. \label{linalg}\end{align}

\subsection{Nonlinear estimates for exponential weights} \label{estexpM}

Provided $\eta_E(t) \leq r_0$, the nonlinear term in~\eqref{Duhamel} can be estimated, using~\eqref{nonlinearbounds1}, $M \geq 8d_1$, $\eta_E(t) \leq r_0$ and the boundedness of $x \mapsto xe^{-x^2}$ on $\R$, as follows
\begin{align}
\begin{split}
&\left|\int_0^t  \int_{\R} \left(\theta(x-y,t-s) f_1(z(y,s)) + \theta_x(x-y,t-s)g_1(z(y,s))\right)\de y\de s\right| \\
&\qquad \leq C\int_0^t\int_\R \left[\theta(x-y,t-s) |u(y,s)|^4 + |\theta_x(x-y,t-s)| |u(y,s)|^2\right.\\
&\qquad \phantom{\leq C\int_0^t\int_\R}\left.\phantom{|u_x(y,s)|^4} + \left(\theta(x-y,t-s) + |\theta_x(x-y,t-s)|\right)|u(y,s)||v(y,s)|\right] \de y \de s\\
&\qquad \leq C\eta_E(t)^2 \left(I_* + I_0 + I_1\right),
\end{split} \label{estM3}
\end{align}
with
\begin{align}
I_* &= \int_0^t\int_{\R} \frac{e^{-\tfrac{(x-y+c_1(t-s))^2}{M(t-s)} - \tfrac{(y+c_1s)^2}{M(1+s)}}}{1+s} \left(\frac{1}{(1+s)\sqrt{t-s}} + \frac{1}{t-s}\right)\de y\de s, \label{defI*}\\
I_j &= \int_0^t\int_{\R} \frac{e^{-\tfrac{2(x-y+c_1(t-s))^2}{M(t-s)} - \tfrac{(y+c_1s)^2}{M(1+s)} - \tfrac{(y+c_2s)^2}{M(1+s)}}}{(1+s)(t-s)^{(1+j)/2}}\de y\de s, \quad j = 0,1. \label{defIj}
\end{align}

\paragraph*{Estimate on $I_*$.} Using~\eqref{integralid1}, we compute the inner integral in $I_*$ and establish
\begin{align}
I_* &\leq C \frac{e^{-\tfrac{(x+c_1t)^2}{M(1+t)}}}{\sqrt{1+t}}\int_0^t \left( \frac{1}{(1+s)^{3/2}} + \frac{1}{\sqrt{1+s}\sqrt{t-s}}\right) \de s \leq C\frac{e^{-\tfrac{(x+c_1t)^2}{M(1+t)}}}{\sqrt{1+t}}. \label{estM2}
\end{align}

\paragraph*{Estimate on $I_j$.} Using~\eqref{integralid1}, we calculate the integral over $y$ in $I_j$ and obtain
\begin{align}
I_j &\leq C \frac{e^{-\tfrac{(x+c_1t)^2}{M(1+t)}}}{\sqrt{1+t}}\int_0^t \frac{e^{-\tfrac{s^2 (t-s) (c_1-c_2)^2}{2 M (1+s) (1+t)}-\tfrac{(x + c_1t + s(c_2-c_1))^2}{M (1+t)}}}{\sqrt{1+s}(t-s)^{j/2}} \de s\leq C\frac{e^{-\tfrac{(x+c_1t)^2}{M(1+t)}}}{\sqrt{1+t}}\int_0^t \frac{e^{-\tfrac{s^2 (t - s) (c_1 - c_2)^2}{2 M (1 + s) (1 + t)}}}{\sqrt{1+s}(t-s)^{j/2}} \de s, \label{estEM1}
\end{align}
for $j = 0, 1$. For $t \geq 2$, we bound the integral in~\eqref{estEM1} using $c_1 \neq c_2$ and the integral identity
\begin{align}
\int_0^\infty \frac{e^{-z^2 a}}{\sqrt{z}}\de z &= \frac{2 \Gamma \left(\frac{5}{4}\right)}{\sqrt[4]{a}}, \qquad a > 0, \label{integralID5}
\end{align}
where $\Gamma$ denotes the Gamma function. Thus, we establish
\begin{align}
\begin{split}
\int_0^t \frac{e^{-\tfrac{s^2 (t - s) (c_1 - c_2)^2}{2 M (1 + s) (1 + t)}}}{\sqrt{1+s}(t-s)^{j/2}} \de s &\leq C\left(\int_0^1 \frac{1}{(t-s)^{j/2}} \de s + \int_{1}^{\frac{t}{2}} \frac{\sqrt{1+s}(1+t)}{s^2 (t-s)^{1+j/2}} \de s\right.\\ &\quad \left. + \int_{\frac{t}{2}}^{t-1} \frac{e^{-\tfrac{t^2 (t - s) (c_1 - c_2)^2}{8 M (1 + t)^2}}}{\sqrt{1+t}(t-s)^{j/2}} \de s + \int_{t-1}^t \frac{1}{\sqrt{1+t}(t-s)^{j/2}} \de s\right) \leq \frac{C}{(1+t)^{j/2}},
\end{split} \label{innerest}
\end{align}
for $j = 0,1$ and $t \geq 2$. On the other hand, for $t \leq 2$ we establish the bound
\begin{align*} \int_0^t \frac{e^{-\tfrac{s^2 (t - s) (c_1 - c_2)^2}{2 M (1 + s) (1 + t)}}}{\sqrt{1+s}(t-s)^{j/2}} \de s \leq \int_0^2 \frac{1}{(t-s)^{j/2}} \de s \leq C.\end{align*}
Thus, we obtain
\begin{align} I_j \leq C\frac{e^{-\tfrac{(x+c_1t)^2}{M(1+t)}}}{(1+t)^{(1+j)/2}}, \qquad j = 0,1. \label{estM4}\end{align}

\paragraph*{Final nonlinear estimate.} Combining~\eqref{estM3},~\eqref{estM2} and~\eqref{estM4} yields the bound
\begin{align}\left|\int_0^t  \int_{\R} \left(\theta(x-y,t-s) f_1(z(y,s)) + \theta_x(x-y,t-s)g_1(z(y,s))\right)\de y\de s\right| \leq C\eta_E(t)^2\frac{e^{-\tfrac{(x+c_1t)^2}{M(1+t)}}}{\sqrt{1+t}},\label{exponentialNL}
\end{align}
on the nonlinear term in~\eqref{Duhamel}.

\subsection{Nonlinear estimates for algebraic weights} \label{sec:nonlalg}

Using~\eqref{nonlinearbounds1}, $M \geq 16d_1$, $r \geq 3$, $\eta_A(t) \leq r_0$ and the fact that $x \mapsto xe^{-x^2}$ is bounded on $\R$, the nonlinear term in~\eqref{Duhamel} enjoys the bound
\begin{align}
\begin{split}
&\left|\int_0^t  \int_{\R} \left(\theta(x-y,t-s) f_1(z(y,s)) + \theta_x(x-y,t-s)g_1(z(y,s))\right)\de y\de s\right| \\
&\qquad \leq C\int_0^t\int_\R \left[\theta(x-y,t-s) |u(y,s)|^4 + |\theta_x(x-y,t-s)| |u(y,s)|^2\right.\\
&\qquad \phantom{\leq C\int_0^t\int_\R}\left.\phantom{|u_x(y,s)|^4} + \left(\theta(x-y,t-s) + |\theta_x(x-y,t-s)|\right)|u(y,s)||v(y,s)|\right] \de y \de s\\
&\qquad \leq C\eta_A(t)^2 \left(I_* + I_0 + I_1 + II_0 + II_1 + III_0 + III_1 + IV_0 + IV_1\right),
\end{split} \label{estM5}
\end{align}
provided $\eta_A(t) \leq r_0$, where $I_*,I_0$ and $I_1$ are defined in~\eqref{defI*} and~\eqref{defIj} and
\begin{align*}
II_j &= \int_0^t\int_{\R} \frac{e^{-\tfrac{4(x-y+c_1(t-s))^2}{M(t-s)}}}{\left(1 + |y+c_1s| + \sqrt{s}\right)^{2r} (t-s)^{(j+1)/2}} \de y\de s,\\
III_j &= \int_0^t \int_{\R} \frac{e^{-\tfrac{4(x-y+c_1(t-s))^2}{M(t-s)}}}{\left(1 + |y+c_1s| + \sqrt{s}\right)^{r} \left(1 + |y+c_2s| + \sqrt{s}\right)^{r} (t-s)^{(j+1)/2}} \de y\de s,\\
IV_j &= \int_0^t \int_{\R} \frac{e^{-\tfrac{4(x-y+c_1(t-s))^2}{M(t-s)} - \tfrac{(y+c_2s)^2}{M(1+s)}}}{\sqrt{1+s} \left(1 + |y+c_1s| + \sqrt{s}\right)^{r}(t-s)^{(j+1)/2}} \de y\de s,
\end{align*}
with $j = 0,1$. Estimates on $I_*$, $I_j$ have already been obtained in~\eqref{estM2} and~\eqref{estM4}, respectively.

\paragraph*{Estimates on $II_j$ and $III_j$.} We estimate $III_j$ for arbitrary $c_1,c_2 \in \R$, which also provides an estimate on $II_j$ by taking $c_1=c_2$. For $j = 0,1$, we split the integral $III_j$ as follows
\begin{align*} III_j &= \int_0^{\tfrac{t}{2}} \int_{|y+c_1s| \geq \frac{|x+c_1t|}{2}} \ldots \de y \de s + \int_0^{t} \int_{|y+c_1s| \leq \frac{|x+c_1t|}{2}} \ldots \de y \de s + \int_{\tfrac{t}{2}}^t \int_{|y+c_1s| \geq \frac{|x+c_1t|}{2}} \ldots \de y \de s\\ 
&= \widetilde{I}_j + \widetilde{II}_j + \widetilde{III}_j, \end{align*}
and estimate the terms separately. Thus, using $r \geq 3$, we obtain
\begin{align*}
\widetilde{I}_j &\leq \frac{C}{\sqrt{t}} \int_0^{\tfrac{t}{2}} \int_{|y+c_1s| \geq \frac{|x+c_1t|}{2}} \frac{1}{\left(1+|y+c_1s|\right)^r \left(1+\sqrt{s}\right)^{r}(t-s)^{j/2}} \de y \de s \\
&\qquad \qquad \qquad \qquad \leq \frac{C}{\sqrt{t}} \int_0^{\tfrac{t}{2}} \frac{1}{\left(1+\sqrt{s}\right)^{r}(t-s)^{j/2}} \int_{\R}\frac{1}{\left(1+|y+c_1s|\right)^r} \de y\de s \leq \frac{C}{\sqrt{t}},
\end{align*}
but also
\begin{align*}
\widetilde{I}_j &\leq \frac{C}{\left(1 + |x+c_1t|\right)^{r}} \int_0^{\tfrac{t}{2}} \int_{|y| \geq \frac{|x+c_1t|}{2}} \frac{e^{-\tfrac{(x-y+c_1(t-s))^2}{M(t-s)}}}{\left(1+\sqrt{s}\right)^{r}(t-s)^{(1+j)/2}} \de y \de s\\
&\leq \frac{C}{\left(1 + |x+c_1t|\right)^{r}} \int_0^{\tfrac{t}{2}} \frac{1}{\left(1+\sqrt{s}\right)^{r}(t-s)^{j/2}} \int_{\R} \frac{e^{-\tfrac{(x-y+c_1(t-s))^2}{M(t-s)}}}{\sqrt{t-s}} \de y \de s \leq \frac{C}{\left(1 + |x +c_1t|\right)^{r}}.
\end{align*}
Thus, applying the inequality derived in~\cite[Corollary 6.16]{JUN}, we establish
\begin{align}
 \widetilde{I}_j \leq C \min\left\{\frac{1}{\sqrt{t}}, \frac{1}{\left(1+|x+c_1t|\right)^r}\right\} \leq C \left[\frac{1}{\left(1+|x+c_1t|+\sqrt{t}\right)^{r}} + \frac{e^{-\tfrac{(x+c_1t)^2}{M(1+t)}}}{\sqrt{1+t}}\right], \qquad j = 0,1. \label{JUNeq}
\end{align}
Subsequently, using $r \geq 3$, we estimate
\begin{align*}
&\widetilde{II}_j \leq Ce^{-\tfrac{(x+c_1t)^2}{M(1+t)}} \int_0^t \int_{|y+c_1s| \leq \frac{|x+c_1t|}{2}} \frac{e^{-\tfrac{2(x-y+c_1(t-s))^2}{M(t-s)}}}{\left(1 + |y+c_1s|\right)^{r} \left(1+\sqrt{s}\right)^{r}(t-s)^{(1+j)/2}} \de y \de s\\
 &\leq Ce^{-\tfrac{(x+c_1t)^2}{M(1+t)}} \int_0^t \frac{1}{\left(1+\sqrt{s}\right)^{r}\sqrt{t-s}} \left(\int_{\R} \frac{e^{-\tfrac{2(x-y+c_1(t-s))^2}{M(t-s)}}}{\sqrt{t-s}} \de y + \int_\R \frac{1}{\left(1 + |y+c_1s|\right)^{r}} \de y\right) \de s \leq C \frac{e^{-\tfrac{(x+c_1t)^2}{M(1+t)}}}{\sqrt{1+t}}.
\end{align*}
Finally, using $r \geq 3$, it holds
 \begin{align*}
\widetilde{III}_j &\leq \frac{C}{\left(1 + |x+c_1t| + \sqrt{t}\right)^{r}} \int_{\tfrac{t}{2}}^t \int_{|y+c_1s| \geq \frac{|x+c_1t|}{2}} \frac{e^{-\tfrac{(x-y+c_1(t-s))^2}{M(t-s)}}}{\left(1 + \sqrt{s}\right)^{r}(t-s)^{(1+j)/2}} \de y \de s\\
 &\leq \frac{C}{\left(1 + |x+c_1t| + \sqrt{t}\right)^{r}} \int_{\tfrac{t}{2}}^t \frac{1}{\left(1+\sqrt{s}\right)^{r}(t-s)^{j/2}} \int_{\R} \frac{e^{-\tfrac{(x-y+c_1(t-s))^2}{M(t-s)}}}{\sqrt{t-s}} \de y \de s \leq \frac{C}{\left(1 + |x+c_1t| + \sqrt{t}\right)^{r}}.
 \end{align*}
This concludes the estimation of the integral $III_j$ (and thus $II_j$ by taking $c_1 = c_2$ in the above). We have obtained
\begin{align} II_j + III_j \leq C \left[\frac{1}{\left(1+|x+c_1t|+\sqrt{t}\right)^{r}} + \frac{e^{-\tfrac{(x+c_1t)^2}{M(1+t)}}}{\sqrt{1+t}}\right], \qquad j = 0,1. \label{estL2}\end{align}

\paragraph*{Estimate on $IV_j$.} We continue with estimating the integral $IV_j$, which we again split as follows
\begin{align*} IV_j &= \int_0^{\tfrac{t}{2}} \int_{|y+c_1s| \geq \frac{|x+c_1t|}{2}} \ldots \de y \de s + \int_0^{t} \int_{|y+c_1s| \leq \frac{|x+c_1t|}{2}} \ldots \de y \de s + \int_{\tfrac{t}{2}}^t \int_{|y+c_1s| \geq \frac{|x+c_1t|}{2}} \ldots \de y \de s \\&= \widehat{I}_j + \widehat{II}_j + \widehat{III}_j, \end{align*}
for $j = 0,1$. Using~\eqref{L1integral}, we obtain
\begin{align*}
\widehat{I}_j &\leq \frac{C}{\sqrt{t}} \int_0^{\tfrac{t}{2}} \frac{1}{\sqrt{1+s}(t-s)^{j/2}} \int_{|y+c_1s| \geq \frac{|x+c_1t|}{2}} \frac{1}{\left(1+|y+c_1s|+\sqrt{s}\right)^r } \de y \de s \\&\qquad\qquad\qquad\qquad\qquad\qquad\qquad\qquad \leq \frac{C}{\sqrt{t}} \int_0^{\frac{t}{2}} \frac{1}{\left(1+\sqrt{s}\right)^{r}(t-s)^{j/2}} ds \leq \frac{C}{\sqrt{t}}.
\end{align*}
On the other hand,~\eqref{integralid1} yields
\begin{align}
\int_{\R}  e^{-\tfrac{(x-y+c_1(t-s))^2}{M(t-s)} - \tfrac{(y+c_2s)^2}{M(1+s)}}dy = e^{-\tfrac{(x + c_1t + (c_2-c_1)s)^2}{M (1 + t)}} \sqrt{\frac{\pi M (1 + s) (t-s)}{1 + t}}. \label{integralID3}
\end{align}
for $s \in [0,t]$. So, we also have the estimate
\begin{align*}
\widehat{I}_j &\leq \frac{C}{\left(1 + |x+c_1t|\right)^{r}} \int_0^{\tfrac{t}{2}} \int_{|y+c_1s| \geq \frac{|x+c_1t|}{2}}  \frac{e^{-\tfrac{(x-y+c_1(t-s))^2}{M(t-s)} - \tfrac{(y+c_2s)^2}{M(1+s)}}}{\sqrt{1+s}(t-s)^{(1+j)/2}} \de y \de s\\
&\leq \frac{C}{\left(1 + |x+c_1t|\right)^{r}} \left(\int_\R \frac{e^{-\tfrac{(x+c_1 t + (c_2-c_1)s)^2}{M(1+t)}}}{\sqrt{1+t}}ds + \int_{\max\{t-1,0\}}^t \frac{1}{(t-s)^{j/2}} \de s\right) \leq \frac{C}{\left(1 + |x+c_1t|\right)^{r}}.
\end{align*}
Thus, as in~\eqref{JUNeq} we establish
\begin{align*}
\widehat{I}_j \leq C \left[\frac{1}{\left(1+|x+c_1t|+\sqrt{t}\right)^{r}} + \frac{e^{-\tfrac{(x+c_1t)^2}{M(1+t)}}}{\sqrt{1+t}}\right], \qquad j = 0,1.
\end{align*}
Subsequently, using $r \geq 3$ and~\eqref{L1integral}, we estimate
\begin{align*}
\widehat{II}_j &\leq C e^{-\tfrac{(x+c_1t)^2}{M(1+t)}} \int_0^t \int_{|y+c_1s| \leq \frac{|x+c_1t|}{2}} \frac{e^{-\tfrac{2(x-y+c_1(t-s))^2}{M(t-s)}}}{\left(1 + |y+c_1s| + \sqrt{s}\right)^{r} \sqrt{1+s} (t-s)^{(1+j)/2}} \de y \de s\\
&\leq Ce^{-\tfrac{(x+c_1t)^2}{M(1+t)}} \int_0^t \frac{1}{\sqrt{1+s}\sqrt{t-s}} \left(\int_{\R} \frac{e^{-\tfrac{2(x-y+c_1(t-s))^2}{M(t-s)}}}{\sqrt{t-s}\left(1+\sqrt{s}\right)^r} \de y + \int_\R \frac{1}{\left(1 + |y+c_1s| + \sqrt{s}\right)^{r}} \de y\right) \de s \\ &\qquad\qquad\qquad\qquad\qquad\qquad\qquad\qquad\quad\leq C e^{-\tfrac{(x+c_1t)^2}{M(1+t)}} \int_0^t \frac{1}{\sqrt{t-s} \left(1+\sqrt{s}\right)^{r}} ds \leq C \frac{e^{-\tfrac{(x+c_1t)^2}{M(1+t)}}}{\sqrt{1+t}}.
\end{align*}
Finally, using $r \geq 3$ and~\eqref{integralID3}, it holds
\begin{align*}
&\widehat{III}_j \leq \frac{C}{\left(1 + |x+c_1t| + \sqrt{t}\right)^{r}} \int_{\tfrac{t}{2}}^t \int_{|y+c_1s| \geq \frac{|x+c_1t|}{2}} \frac{e^{-\tfrac{(x-y+c_1(t-s))^2}{M(t-s)} - \tfrac{(y+c_2s)^2}{M(1+s)}}}{\sqrt{1+s}(t-s)^{(1+j)/2} }\de y\de s\\
&\leq \frac{C}{\left(1 + |x+c_1t| + \sqrt{t}\right)^{r}} \left(\int_\R \frac{e^{-\tfrac{(x+c_1 t + (c_2-c_1)s)^2}{M(1+t)}}}{\sqrt{1+t}}ds + \int_{\max\{t-1,0\}}^t \frac{1}{(t-s)^{j/2}} \de s\right) \leq \frac{C}{\left(1 + |x+c_1t| + \sqrt{t}\right)^{r}}.
\end{align*}
This concludes the estimation of the last integral $IV_j$. We have obtained
\begin{align} IV_j \leq C \left[\frac{1}{\left(1+|x+c_1t|+\sqrt{t}\right)^{r}} + \frac{e^{-\tfrac{(x+c_1t)^2}{M(1+t)}}}{\sqrt{1+t}}\right], \qquad j = 0,1. \label{estL3}\end{align}

\paragraph*{Final nonlinear estimate.} Combining~\eqref{estM2},~\eqref{estM4},~\eqref{estM5},~\eqref{estL2} and~\eqref{estL3} yields the bound
\begin{align}\begin{split}&\left|\int_0^t  \int_{\R} \left(\theta(x-y,t-s) f_1(z(y,s)) + \theta_x(x-y,t-s)g_1(z(y,s))\right)\de y\de s\right|\\ &\qquad \leq C\eta_A(t)^2\left[\frac{1}{\left(1+|x+c_1t|+\sqrt{t}\right)^{r}} + \frac{e^{-\tfrac{(x+c_1t)^2}{M(1+t)}}}{\sqrt{1+t}}\right],\end{split} \label{algebraicNL}\end{align}
on the nonlinear term in~\eqref{Duhamel}.

\subsection{Conclusion}

For exponentially localized initial conditions $(u_0,v_0) \in X_{\rho_E}^\alpha$ and $t \in [0,T)$ such that $\eta_E(t) \leq r_0$, estimates~\eqref{linexp} and~\eqref{exponentialNL} on the linear and nonlinear terms in~\eqref{Duhamel} yield
\begin{align*}
\left|u(x,t)e^{\tfrac{(x+c_1t)^2}{M(1+t)}}\right| \leq C\left(\delta + \eta_E(t)^2\right), \qquad x \in \R.
\end{align*}
Analogously, one obtains
\begin{align*}
\left|v(x,t)e^{\tfrac{(x+c_2t)^2}{M(1+t)}}\right| \leq C\left(\delta + \eta_E(t)^2\right), \qquad x \in \R,
\end{align*}
and we conclude that~\eqref{etaest} holds for $i = E$, which proves, as explained in~\S\ref{sec:planM}, Theorem~\ref{mainresult1} for exponentially localized initial data. Similarly, we obtain from the estimates~\eqref{linalg} and~\eqref{algebraicNL} on the linear and nonlinear terms in~\eqref{Duhamel} that~\eqref{etaest} holds for $i = A$, which yields Theorem~\ref{mainresult1} for algebraically localized initial data $(u_0,v_0) \in X_{\rho_A}^\alpha$. $\hfill \Box$

\section{Proof of Theorem~\ref{mainresult2}} \label{sec:proofMR2}

In this proof, $C>1$ denotes a constant, which is independent of $\delta,x$ and $t$ and that will be taken larger if necessary.

\subsection{Plan of proof} \label{sec:planIRR}

In contrast to Theorem~\ref{mainresult1}, we allow in Theorem~\ref{mainresult2} for (irrelevant) nonlinear couplings which are not of mix-type. As explained in~\S\ref{sec:mainresult2} and Remark~\ref{remspatiotemp}, we have to incorporate non-Gaussian upper bounds of the form~\eqref{drag} into our spatio-temporal weight to accommodate such nonlinear terms.

Thus, in contrast to Theorem~\ref{mainresult1}, we define our spatio-temporal weight $\eta \colon [0,T) \to [0,\infty)$ this time by
\begin{align*}
 \eta(t) &= \sup_{\begin{smallmatrix} x \in \R \\ 0 \leq s \leq t \end{smallmatrix}} \left(\left|u(x,s)\right| \left[\frac{e^{-\tfrac{(x+c_1s)^2}{M(1+s)}}}{\sqrt{1+s}} + \int_0^s \frac{e^{-\tfrac{\left(x + s c_1 + r (c_2 - c_1)\right)^2}{M(1+s)}}}{\sqrt{1+s} (1+r)}\left(\frac{\sqrt[4]{r+1}}{\sqrt{r}} + \frac{1}{\sqrt{s-r}}\right)\de r\right]^{-1}\right.\\
 &\qquad \qquad \qquad + \left.\left|v(x,s)\right| \left[\frac{e^{-\tfrac{(x+c_2s)^2}{M(1+s)}}}{\sqrt{1+s}} + \int_0^s \frac{e^{-\tfrac{\left(x + s c_2 + r (c_1 - c_2)\right)^2}{M(1+s)}}}{\sqrt{1+s} (1+r)}\left(\frac{\sqrt[4]{r+1}}{\sqrt{r}} + \frac{1}{\sqrt{s-r}}\right)\de r \right]^{-1}\right).
 \end{align*}
The further set-up is the same as in the proof of Theorem~\ref{mainresult1} and is skipped to avoid unnecessary repetitions. Thus, as in Theorem~\ref{mainresult1}, the result follows by establishing the key inequality
\begin{align} \eta(t) \leq C\left(\delta + \eta(t)^2\right), \label{etaest3}\end{align}
for all $t \in [0,T)$ with $\eta(t) \leq r_0$. We prove~\eqref{etaest3} by estimating the nonlinear terms in integral formulation~\eqref{Duhamel} for the $u$-component in~\eqref{GRD} and, analogously, for the $v$-component. The estimate~\eqref{linexp} on the linear term in~\eqref{Duhamel} has already been obtained in~\S\ref{estexplinM}.

\begin{remark} \label{dragboundrem}
{\upshape
We note that the proof of Theorem~\ref{mainresult2} yields more detailed, spatio-temporal estimates than the bounds~\eqref{temporaldec2}. For exponentially localized initial data $(u_0,v_0) \in X_{\rho_E}^\alpha$ satisfying $\|(u_0,v_0)\|_{\rho_E} < \delta$ we infer
\begin{align*} \left|u(x,t)\right| &\leq \epsilon \left[\frac{e^{-\tfrac{(x+c_1t)^2}{M(1+t)}}}{\sqrt{1+t}} + \int_0^t \frac{e^{-\tfrac{\left(x + t c_1 + s (c_2 - c_1)\right)^2}{M(1+t)}}}{\sqrt{1+t} (1+s)}\left(\frac{\sqrt[4]{s+1}}{\sqrt{s}} + \frac{1}{\sqrt{t-s}}\right)\de s\right], \\
\left|v(x,t)\right| &\leq \epsilon \left[\frac{e^{-\tfrac{(x+c_2t)^2}{M(1+t)}}}{\sqrt{1+t}} + \int_0^t \frac{e^{-\tfrac{\left(x + t c_2 + s (c_1 - c_2)\right)^2}{M(1+t)}}}{\sqrt{1+t} (1+s)}\left(\frac{\sqrt[4]{s+1}}{\sqrt{s}} + \frac{1}{\sqrt{t-s}}\right)\de s\right],\end{align*}
for all $x \in \R$ and $t \geq 0$. Hence, the solutions to~\eqref{GRD} are, as in~\eqref{pointdec}, controlled by drifting Gaussians, which exhibit the diffusive decay as predicted by the linear system~\eqref{GRDLIN}, and by terms of the form~\eqref{drag}. which exhibit algebraic, non-Gaussian-like decay, see Figure~\ref{decaytypes}. We emphasize that the occurrence of such non-Gaussian upper bounds is not artificial. In fact, bounds of the form~\eqref{drag} can be attained. For instance,
\begin{align*} u(x,t) = \frac{e^{-\tfrac{(x+c_1t)^2}{4(1+t)}}}{\sqrt{4 \pi (1+t)}}, \qquad v(x,t) = \int_0^t \frac{e^{-\tfrac{(x+ tc_2 + s(c_1-c_2))^2}{1+t}}}{16 \pi ^2 (1+s)^{3/2} \sqrt{1+t}} \de s,\end{align*}
is an exact solution with exponentially localized initial data to the nonlinearly coupled system
\begin{align*}
\begin{split}
u_t &= u_{xx} + c_1 u_x, \\
v_t &= \tfrac{1}{4} v_{xx} + c_2 v_x + u^4,
\end{split} \qquad t \geq 0, x \in \R.
\end{align*}
}\end{remark}

\begin{remark} 
{\upshape
Terms of the form~\eqref{drag} also occur in the estimates on page 340 in the nonlinear stability analysis~\cite{HOWZUM} of viscous under-compressive shocks satisfying a system of conservation laws. Since, on the linear level, different components might exhibit different velocities in such systems, upper bounds of the form~\eqref{drag} arise in the pointwise estimates. Instead of incorporating such upper bounds into the spatio-temporal weights as in the current analysis, terms of the form~\eqref{drag} are estimated in~\cite{HOWZUM} by a sum of drifting Gaussians and algebraic correction terms. In particular, each component is estimated by the same upper bound and, therefore, differences in velocities are not exploited in~\cite{HOWZUM}.
}\end{remark}

\subsection{Nonlinear estimates}

Assume $t \in [0,T)$ is such that $\eta(t) \leq r_0$. By~\eqref{nonlinearbounds2} the nonlinear term in~\eqref{Duhamel} can be estimated as follows
\begin{align}
&\left|\int_0^t  \int_{\R} \left(\theta(x-y,t-s) f_1(z(y,s)) + \theta_x(x-y,t-s)g_1(z(y,s))\right)\de y\de s\right| \nonumber \\
&\ \leq C\int_0^t\int_\R \left[\theta(x-y,t-s)|v(y,s)|^4 + |\theta_x(x-y,t-s)| |v(y,s)|^3 + \theta(x-y,t-s)|u(y,s)|^4 \right. \label{estI3}\\
&\phantom{\int_0^t\int_\R} \left.\phantom{|u_x|^4}  + |\theta_x(x-y,t-s)| |u(y,s)| \left(|u(y,s)| + |v(y,s)|\right) + \theta(x-y,t-s)|u(y,s)| |v(y,s)|\right] \de y \de s. \nonumber
\end{align}
In the following, we estimate the convolutions in~\eqref{estI3} in three steps. First, we estimate the integrals coming from irrelevant couplings which are not of mix-type. Then, we estimate all remaining irrelevant terms and marginal terms in divergence form. Finally, we estimate the remaining relevant and marginal mix-terms, where we exploit the difference in velocities.

\subsubsection{Irrelevant couplings which are not of mix-type} \label{sec:irrelevantcoupling}
First, we use $M \geq 4d_1$, $\eta(t) \leq r_0$ and the boundedness of $x \mapsto xe^{-x^2}$ on $\R$ and we observe
\begin{align}
\int_0^t\int_\R \theta(x-y,t-s)|v(y,s)|^4 + |\theta_x(x-y,t-s)| |v(y,s)|^3 \de y \de s \leq C\eta(t)^2\left(J_1 + J_2\right), \label{estI5}
\end{align}
with
\begin{align*}
J_1 &= \int_0^t\int_{\R} \frac{e^{-\tfrac{(x-y+c_1(t-s))^2}{M(t-s)} - \tfrac{(y+c_2s)^2}{M(1+s)}}}{(1+s)^{3/2}} \left(\frac{1}{\sqrt{1+s}\sqrt{t-s}} + \frac{1}{t-s}\right)\de y\de s, \\
J_2 &= \int_0^t\int_0^s\int_{\R} \frac{e^{-\tfrac{(x-y+c_1(t-s))^2}{M(t-s)} - \tfrac{(y+c_2s+r(c_1-c_2))^2}{M(1+s)}}}{(1+s)^{3/2}(1+r)} \left(\frac{\sqrt[4]{1+r}}{\sqrt{r}} + \frac{1}{\sqrt{s-r}}\right)\left(\frac{1}{\sqrt{1+s}\sqrt{t-s}} + \frac{1}{t-s}\right)\de y\de r\de s.
\end{align*}

\paragraph*{Estimate on $J_1$.} Using~\eqref{integralid1}, we calculate the inner integral in $J_1$ and obtain
\begin{align}
\begin{split}
J_1 &\leq C \int_0^t \frac{e^{-\tfrac{(x+c_1t + s(c_2-c_1))^2}{M(1+t)}}}{\sqrt{1+t}(1+s)} \left(\frac{1}{\sqrt{1+s}} + \frac{1}{\sqrt{t-s}}\right)\de y\de s. 
 \end{split} \label{estJ1}
\end{align}

\paragraph*{Estimate on $J_2$.} We use~\eqref{integralid1} again to compute the inner integral in $J_2$ and establish
\begin{align*}
J_2 &\leq \int_0^t\int_0^s \frac{e^{-\tfrac{(x+c_1t + (s-r)(c_2-c_1))^2}{M(1+t)}}}{\sqrt{1+t}(1+r)(1+s)} \left(\frac{\sqrt[4]{1+r}}{\sqrt{r}} + \frac{1}{\sqrt{s-r}}\right)\left(\frac{1}{\sqrt{1+s}} + \frac{1}{\sqrt{t-s}}\right)\de r\de s \\
 &\leq C\int_0^t \frac{e^{-\tfrac{(x+c_1t + \tilde{r}(c_2-c_1))^2}{M(1+t)}}}{\sqrt{1+t}(1+\tilde{r})} \left[\int_{\tilde{r}}^t \frac{1}{(1+s-\tilde{r})^{3/4} \sqrt{s-\tilde{r}}} \left(\frac{1}{\sqrt{1+\tilde{r}}} + \frac{1}{\sqrt{t-s}}\right)\de s\right.\\ &\qquad \qquad \qquad \qquad \qquad \qquad +\left. \frac{1}{\sqrt{\tilde{r}}} \int_{\tilde{r}}^t \frac{1}{(1+s-\tilde{r})}\left(\frac{1}{\sqrt{1+s}} + \frac{1}{\sqrt{t-s}}\right) \de s\right]\de \tilde{r}.
\end{align*}
Employing the estimate
\begin{align*}
\int_{\tilde{r}}^t \frac{(1+s-\tilde{r})^{-3/4}}{\sqrt{t-s}\sqrt{s-\tilde{r}}} \de s \leq C\left(\int_{\tilde{r}}^{\frac{t+\tilde{r}}{2}} \frac{(1+s-\tilde{r})^{-3/4}}{\sqrt{t-r}\sqrt{s-\tilde{r}}} \de s + \int_{\frac{t+\tilde{r}}{2}}^t \frac{(1+s-\tilde{r})^{-3/4}}{\sqrt{t-s}\sqrt{}} \de s\right) \leq \frac{C}{\sqrt{t-\tilde{r}}},
\end{align*}
in the above yields
\begin{align}
J_2 &\leq C\left[\int_0^t \frac{e^{-\tfrac{\left(x + t c_1 + \tilde{r} (c_2 - c_1)\right)^2}{M(1+t)}}}{\sqrt{1+t} (1+\tilde{r})}\left(\frac{1}{\sqrt{\tilde{r}}} + \frac{1}{\sqrt{t-\tilde{r}}}\right)\de \tilde{r}\right].\label{estJ2}
 \end{align}

\subsubsection{Other irrelevant terms and marginal terms in divergence form}
We use $M \geq 4d_1$, $\eta(t) \leq r_0$ and the boundedness of $x \mapsto xe^{-x^2}$ on $\R$ and we continue with estimating the second nonlinear term in~\eqref{estI3}:
\begin{align}
\begin{split}
\int_0^t\int_\R & \left[\theta(x-y,t-s)|u(y,s)|^4 + |\theta_x(x-y,t-s)| |u(y,s)| \left(|u(y,s)| + |v(y,s)|\right)\right] \de y \de s\\ &\leq C \eta(t)^2 \left(I_* + J_3\right), \end{split}\label{estI4}
\end{align}
with $I_*$ defined in~\eqref{defI*} and
\begin{align*}
J_3 &= \int_0^t\int_0^s\int_{\R} \frac{e^{-\tfrac{(x-y+c_1(t-s))^2}{M(t-s)} - \tfrac{(y+c_1s+r(c_2-c_1))^2}{M(1+s)}}}{(1+s)(1+r)} \left(\frac{\sqrt[4]{1+r}}{\sqrt{r}} + \frac{1}{\sqrt{s-r}}\right)\left(\frac{1}{(1+s)\sqrt{t-s}} + \frac{1}{t-s}\right)\de y\de r\de s.
\end{align*}
Note that an estimate on $I_*$ has been obtained in~\eqref{estM2}.

\paragraph*{Estimate on $J_3$.} Employing~\eqref{integralid1}, we compute the inner integral in $J_3$ and we establish the bound
\begin{align}
\begin{split}
J_3 &\leq C\int_0^t\int_0^s \frac{e^{-\tfrac{(x + c_1t + r (c_2-c_1))^2}{M(1+t)}}}{\sqrt{1+t}(1+r)\sqrt{1+s}} \left(\frac{\sqrt[4]{1+r}}{\sqrt{r}} + \frac{1}{\sqrt{s-r}}\right)\left(\frac{1}{(1+s)} + \frac{1}{\sqrt{t-s}}\right)\de r\de s \\
&= C\int_0^t \frac{e^{-\tfrac{(x + c_1t + r (c_2-c_1))^2}{M(1+t)}}}{\sqrt{1+t}} \left[\frac{1}{(1+r)^{3/4}\sqrt{r}} \int_r^t \left(\frac{1}{(1+s)^{3/2}} + \frac{1}{\sqrt{1+s} \sqrt{t-s}}\right) \de s\right. \\
&\quad \left. + \frac{1}{(1+r)^{3/2}} \int_r^t \left(\frac{1}{(1+s)\sqrt{s-r}} + \frac{1}{\sqrt{t-s}\sqrt{s-r}}\right)\de s\right]\de r \ \leq \ C\int_0^t \frac{e^{-\tfrac{\left(x + t c_1 + r (c_2 - c_1)\right)^2}{M(1+t)}}}{\sqrt{1+t} \sqrt{r} (1+r)^{3/4}} \de r.
\end{split}
\label{estJ3}
\end{align}

\subsubsection{Relevant and marginal mix-terms}
We use $M \geq 8d_1$ and estimate the remaining nonlinear term in~\eqref{estI3} as follows
\begin{align}
\int_0^t\int_\R \theta(x-y,t-s) |v(y,s)||u(y,s)| \de y \de s \leq C\eta(t)^2\left(I_0 + J_4 + J_5 + J_6\right), \label{estI6}
\end{align}
with $I_0$ defined in~\eqref{defIj} and
\begin{align*}
J_4 &= \int_0^t \int_0^s \int_\R \frac{e^{-\tfrac{2(x-y+c_1(t-s))^2}{M(t-s)} - \tfrac{(y+c_1s+r(c_2-c_1))^2}{M(1+s)}-\tfrac{(y+c_2s)^2}{M(1+s)}}}{(1+s)\sqrt{t-s}(1+r)} \left(\frac{\sqrt[4]{1+r}}{\sqrt{r}} + \frac{1}{\sqrt{s-r}}\right)\de y \de r \de s,\\
J_5 &= \int_0^t \int_0^s \int_0^s \int_\R \frac{e^{-\tfrac{2(x-y+c_1(t-s))^2}{M(t-s)} - \tfrac{(y+c_1s+r(c_2-c_1))^2}{M(1+s)}-\tfrac{(y+c_2s+p(c_1-c_2))^2}{M(1+s)}}}{(1+s)\sqrt{t-s}(1+r)(1+p)} \left(\frac{\sqrt[4]{1+r}}{\sqrt{r}} + \frac{1}{\sqrt{s-r}}\right) \\
& \qquad \qquad \qquad \qquad  \cdot \left(\frac{\sqrt[4]{1+p}}{\sqrt{p}} + \frac{1}{\sqrt{s-p}}\right)\de y \de p \de r \de s,\\
J_6 &= \int_0^t \int_0^s \int_\R \frac{e^{-\tfrac{2(x-y+c_1(t-s))^2}{M(t-s)} - \tfrac{(y+c_1s)^2}{M(1+s)}-\tfrac{(y+c_2s + r(c_1 -c_2))^2}{M(1+s)}}}{(1+s)\sqrt{t-s}(1+r)} \left(\frac{\sqrt[4]{1+r}}{\sqrt{r}} + \frac{1}{\sqrt{s-r}}\right)\de y \de r \de s.
\end{align*}
Note that an estimate on $I_0$ has been obtained in~\eqref{estM4}. In the following, we estimate $J_4,J_5$ and $J_6$.

\paragraph*{Estimate on $J_4$.} Using~\eqref{integralid1} and Young's inequality, we calculate the inner integral in $J_4$ and obtain
\begin{align}
\begin{split}
J_4 &\leq C \int_0^t \int_0^s \frac{e^{-\tfrac{\left(x + c_1 t + r (c_2-c_1)\right)^2}{2 M(1+t)}-\tfrac{\left(x+ c_1 t + s (c_2-c_1)\right)^2}{2 M (1+t)} - \tfrac{\left(x+c_1t+\frac{1}{2} (r+s) (c_2-c_1)\right)^2}{M (1+t)}-\tfrac{(s-r)^2 (2t + 1 - s) (c_1-c_2)^2}{4 M (1+s) (1+t)}}}{\sqrt{1+t}\sqrt{1+s}(1+r)}\\
&\qquad \qquad \qquad \qquad\qquad\qquad\qquad\qquad\qquad \cdot \left(\frac{\sqrt[4]{1+r}}{\sqrt{r}} + \frac{1}{\sqrt{s-r}}\right) \de r \de s \leq C(J_{41} + J_{42}),
\end{split} \label{estJ42}
\end{align}
with
\begin{align*}
J_{41} &= \int_0^t \frac{e^{-\tfrac{\left(x + c_1 t + r (c_2-c_1)\right)^2}{M(1+t)}}}{\sqrt{1+t}(1+r)} \int_r^t \frac{e^{-\tfrac{(s-r)^2 (c_1-c_2)^2}{4 M (1+s)}}}{\sqrt{1+s}}\left(\frac{\sqrt[4]{1+r}}{\sqrt{r}} + \frac{1}{\sqrt{s-r}}\right) \de s \de r,\\
J_{42} &= \int_0^t \frac{e^{-\tfrac{\left(x + c_1 t + s (c_2-c_1)\right)^2}{M(1+t)}}}{\sqrt{1+t}\sqrt{1+s}} \int_0^s \frac{e^{-\tfrac{(s-r)^2 (c_1-c_2)^2}{4 M (1+s)}}}{1+r}\left(\frac{\sqrt[4]{1+r}}{\sqrt{r}} + \frac{1}{\sqrt{s-r}}\right) \de r \de s.
\end{align*}
We compute
\begin{align}
\int_r^\infty \frac{e^{-\tfrac{(s-r)^2 a}{1+s}}}{\sqrt{1+s}}\de s &= \frac{\sqrt{\pi } \left(e^{4 a (r+1)} \text{erfc}\left(\sqrt{4 a (r+1)}\right)+1\right)}{2 \sqrt{a}}, \label{integralID4}
\end{align}
for $a > 0$ and $r \geq 0$, where $\text{erfc}(x) = 1 - \text{erf}(x)$ denotes the complementary error function. By applying the Chernoff bound to the Gaussian distribution, we find $\text{erfc}(x) \leq 2e^{-x^2}$ for $x \geq 0$. Hence, using~\eqref{integralID5},~\eqref{integralID4} and $c_1 \neq c_2$, we bound
\begin{align} \int_r^\infty  \frac{e^{-\tfrac{(s-r)^2 (c_1-c_2)^2}{4 M (1+s)}}}{\sqrt{1+s}\sqrt{s-r}}\de s \leq \int_r^{2r}  \frac{e^{-\tfrac{(s-r)^2 (c_1-c_2)^2}{4 M (1+2r)}}}{\sqrt{1+r}\sqrt{s-r}} \de s + \int_{2r}^\infty  \frac{e^{-\tfrac{(s-r)^2 (c_1-c_2)^2}{4 M (1+s)}}}{\sqrt{1+s}\sqrt{r}}\de s \leq C \frac{\sqrt[4]{1+r}}{\sqrt{r}}, \label{integralID6} \end{align}
for $0 \leq r \leq s$. So, with the aid of~\eqref{integralID4} and~\eqref{integralID6} we obtain
\begin{align}
J_{41} \leq C\int_0^t \frac{e^{-\tfrac{\left(x + c_1 t + r (c_2-c_1)\right)^2}{M(1+t)}}}{\sqrt{1+t} (1+r)^{3/4} \sqrt{r}} \de r. \label{estJ41}
\end{align}
We proceed by estimating the inner integral in $J_{42}$. First, the fact that $y^be^{-y} \leq 1$ for $y \geq 0$ implies
\begin{align} e^{-\frac{a x^2}{1+s}} 
\leq \frac{(1+s)^b}{\max\left\{1,a x^{2}\right\}^b} \leq \frac{2^b (1+s)^b}{\left(1 + a x^2\right)^b} \leq 2^{b} \max\left\{1,a^{-b}\right\} \frac{(1+s)^b}{(1+x)^{2b}} ,\label{expest}\end{align}
for $b \in (0,1], a > 0$ and $x, s \geq 0$. Hence, using $c_1 \neq c_2$,~\eqref{integralID5} and~\eqref{expest}, we obtain
\begin{align*}
\int_0^s &\frac{e^{-\tfrac{(s-r)^2 (c_1-c_2)^2}{4 M (1+s)}}}{1+r}\left(\frac{\sqrt[4]{1+r}}{\sqrt{r}} + \frac{1}{\sqrt{s-r}}\right) \de r \leq C\int_0^{\frac{s}{2}} \frac{e^{-\tfrac{s^2 (c_1-c_2)^2}{16 M (1+s)}}}{1+r}\left(\frac{\sqrt[4]{1+r}}{\sqrt{r}} + \frac{1}{\sqrt{s-r}}\right) \de r \\&\qquad\qquad\qquad\qquad\qquad\qquad\qquad+ C\int_{\frac{s}{2}}^s \frac{e^{-\tfrac{(s-r)^2 (c_1-c_2)^2}{4 M (1+s)}}}{1+s}\left(\frac{\sqrt[4]{1+s}}{\sqrt{s}} + \frac{1}{\sqrt{s-r}}\right) \de r \leq C \frac{1}{\sqrt[4]{1+s} \sqrt{s}},
\end{align*}
for $r \geq 0$. Thus, $J_{42}$ enjoys the bound
\begin{align*}
J_{42} \leq C\int_0^t \frac{e^{-\tfrac{\left(x + c_1 t + s (c_2-c_1)\right)^2}{M(1+t)}}}{\sqrt{1+t} (1+s)^{3/4} \sqrt{s}} \de s.
\end{align*}
Plugging the latter and~\eqref{estJ41} into~\eqref{estJ42} yields
\begin{align}
J_4 \leq C\int_0^t \frac{e^{-\tfrac{\left(x + c_1 t + s (c_2-c_1)\right)^2}{M(1+t)}}}{\sqrt{1+t} (1+s)^{3/4} \sqrt{s}} \de s. \label{estJ4}
\end{align}

\paragraph*{Estimate on $J_5$.} Using~\eqref{integralid1} and Young's inequality, we compute the inner integral in $J_5$ and establish
\begin{align}
\begin{split}
J_5 &\leq C \int_0^t \int_0^s \int_0^s \frac{e^{-\tfrac{\left(x + c_1 t + r (c_2-c_1)\right)^2}{2 M(1+t)}-\tfrac{\left(x+ c_1 t + (s-p) (c_2-c_1)\right)^2}{2 M (1+t)} - \tfrac{\left(x + c_1 t - \frac{1}{2} (c_1-c_2) (r+s-p)\right)^2}{M (1+t)}-\tfrac{(p+r-s)^2 (2t+1-s) (c_1-c_2)^2 }{4 M (1+s) (1+t)}}}{\sqrt{1+t}\sqrt{1+s}(1+r)(1+p)}\cdot\\
&\qquad \qquad \qquad \qquad \qquad \left(\frac{\sqrt[4]{1+r}}{\sqrt{r}} + \frac{1}{\sqrt{s-r}}\right)\left(\frac{\sqrt[4]{1+p}}{\sqrt{p}} + \frac{1}{\sqrt{s-p}}\right) \de p \de r \de s \leq C(J_{51} + J_{52}),
\end{split} \label{estJ52}
\end{align}
with
\begin{align*}
J_{51}\!&=\!\int_0^t \frac{e^{-\tfrac{\left(x + c_1 t + r (c_2-c_1)\right)^2}{M(1+t)}}}{\sqrt{1+t}(1+r)} \int_r^t\! \int_0^s \frac{e^{-\tfrac{(p+r-s)^2 (c_1-c_2)^2}{4 M (1+s)}}}{(1+p)\sqrt{1+s}}\left(\frac{\sqrt[4]{1+r}}{\sqrt{r}} + \frac{1}{\sqrt{s-r}}\right)\left(\frac{\sqrt[4]{1+p}}{\sqrt{p}} + \frac{1}{\sqrt{s-p}}\right) \de p \de s \de r,\\
J_{52}\!&=\! \int_0^t \frac{e^{-\tfrac{\left(x + c_1 t + \tp (c_2-c_1)\right)^2}{M(1+t)}}}{\sqrt{1+t}} \int_{\tp}^t \! \int_0^s \frac{e^{-\tfrac{(r-\tp)^2 (c_1-c_2)^2}{4 M (1+s)}}}{(1+r)(1+s-\tp)\sqrt{1+s}}\left(\frac{\sqrt[4]{1+r}}{\sqrt{r}} + \frac{1}{\sqrt{s-r}}\right)\cdot \\
&\qquad \qquad \qquad \qquad \qquad \qquad \qquad \qquad \qquad \qquad \qquad \qquad  \left(\frac{\sqrt[4]{1+s-\tp}}{\sqrt{s-\tp}} + \frac{1}{\sqrt{\tp}}\right) \de r \de s \de \tp.
\end{align*}
We split the inner integral in $J_{51}$ in five parts. First, as in~\eqref{integralID4} and~\eqref{integralID6}, we estimate
\begin{align*}
\int_r^t \int_0^{\frac{s-r}{2}} & \frac{e^{-\tfrac{(p+r-s)^2 (c_1-c_2)^2}{4 M (1+s)}}}{(1+p)\sqrt{1+s}}\left(\frac{\sqrt[4]{1+r}}{\sqrt{r}} + \frac{1}{\sqrt{s-r}}\right)\left(\frac{\sqrt[4]{1+p}}{\sqrt{p}} + \frac{1}{\sqrt{s-p}}\right) \de p \de s\\ &\leq C\int_r^t \frac{e^{-\tfrac{(s-r)^2 (c_1-c_2)^2}{16 M (1+s)}}}{\sqrt{1+s}} \left(\frac{\sqrt[4]{1+r}}{\sqrt{r}} + \frac{1}{\sqrt{s-r}}\right) \de s \leq C\frac{\sqrt[4]{1+r}}{\sqrt{r}},
\end{align*}
for $r \in [0,t]$. Second, using $c_1 \neq c_2$, we obtain
\begin{align*}
\int_r^t \int_{\frac{s-r}{2}}^s \frac{e^{-\tfrac{(p+r-s)^2 (c_1-c_2)^2}{4 M (1+s)}}}{\sqrt{1+s}(1+p)^{3/4} \sqrt{p}} \de p \de s &\leq C\int_r^t \int_\R \frac{e^{-\tfrac{(p+r-s)^2 (c_1-c_2)^2}{4 M (1+s)}}}{\sqrt{1+s}(1+s-r)^{3/4}\sqrt{s-r}} \de p \de s
\leq C,
\end{align*}
for $r \in [0,t]$. Third, using~\eqref{expest} and $c_1 \neq c_2$, we establish
\begin{align*}
&\int_r^t \int_{\frac{s-r}{2}}^s \frac{e^{-\tfrac{(p+r-s)^2 (c_1-c_2)^2}{4 M (1+s)}} (1+s)^{-1/2} }{(1+p) \sqrt{s-p}} \de p \de s \leq C\int_r^t \left(\int_{\frac{s-r}{2}}^{s-r} \frac{(1+s)^{-1/8}}{(1+s-r-p)^{3/4} (1+s-r) \sqrt{s-r-p} } \de p \right.\\&\qquad \qquad \qquad \qquad \left. + \int_{s-r}^s \frac{(1+s)^{-1/4}}{\sqrt{1+p-(s-r)} (1+s-r)\sqrt{s-p}} \de p \right)\de s \leq C\int_r^t \frac{1}{(1+s-r)^{9/8}} \de s \leq C,
\end{align*}
for $r \in [0,t]$. Fourth, using~\eqref{expest} and $c_1 \neq c_2$ again, we bound
\begin{align*}
&\int_r^t \int_{\frac{s-r}{2}}^s \frac{e^{-\tfrac{(p+r-s)^2 (c_1-c_2)^2}{4 M (1+s)}}(1+s)^{-1/2}}{(1+p)^{3/4} \sqrt{p} \sqrt{s-r}} \de p \de s \leq C\int_r^t \left(\int_{0}^{s-r} \frac{(1+s)^{-1/4}}{(1+s-r)^{3/4} \sqrt{1+s-r-p} \sqrt{p}} \de p \right.\\
&\qquad \qquad\qquad \qquad \left. + \int_{s-r}^s \frac{(1+s)^{-1/4}}{(1+s-r)^{5/8} \sqrt{p-(s-r)} (1+p-(s-r))^{5/8}} \de p \right)\frac{1}{\sqrt{s-r}}\de s
\\& \qquad \leq C\int_r^t \frac{(1+r)^{-1/4}}{(1+s-r)^{5/8}\sqrt{s-r}} \de s \leq \frac{C}{\sqrt[4]{1+r}}.
\end{align*}
for $r \in [0,t]$. Finally, we estimate the fifth part, using~\eqref{expest} and $c_1 \neq c_2$:
\begin{align*}
&\int_r^t \int_{\frac{s-r}{2}}^s \frac{e^{-\tfrac{(p+r-s)^2 (c_1-c_2)^2}{4 M (1+s)}}(1+s)^{-1/2}}{(1+p) \sqrt{s-p} \sqrt{s-r}} \de p \de s \leq C\int_r^t \left(\int_0^{s-r} \frac{(1+s)^{-3/8}}{(1+s-r)^{3/4} \sqrt[4]{1+p} (s-r-p)^{3/4}} \de p \right.\\&\qquad \qquad \qquad \qquad \left. + \int_{s-r}^s \frac{(1+s)^{-3/8}}{\sqrt{1+p-s+r} (1+s-r)^{3/4} \sqrt{s-p}} \de p \right)\frac{1}{\sqrt{s-r}}\de s
\\& \qquad \leq C\int_r^t \frac{(1+r)^{-3/8}}{(1+s-r)^{3/4}\sqrt{s-r}} \de s\leq \frac{C}{(1+r)^{5/8}},
\end{align*}
for $r \in [0,t]$. We conclude $J_{51}$ is estimated by
\begin{align} J_{51} \leq C\int_0^t \frac{e^{-\tfrac{\left(x + c_1 t + r (c_2-c_1)\right)^2}{M(1+t)}}}{\sqrt{1+t}(1+r)^{3/4} \sqrt{r}} \de r. \label{estJ51}\end{align}
Subsequently, we split the inner integral in $J_{52}$ in four parts. First, using~\eqref{expest} and $c_1 \neq c_2$, we estimate
\begin{align*}
\int_{\tp}^t \int_0^{\frac{\tp}{2}} & \frac{e^{-\tfrac{(r-\tp)^2 (c_1-c_2)^2}{4 M (1+s)}}}{(1+r)(1+s-\tp)\sqrt{1+s}}\left(\frac{\sqrt[4]{1+r}}{\sqrt{r}} + \frac{1}{\sqrt{s-r}}\right) \left(\frac{\sqrt[4]{1+s-\tp}}{\sqrt{s-\tp}} + \frac{1}{\sqrt{\tp}}\right) \de r \de s \\
&\leq C\int_{\tp}^t \frac{e^{-\frac{\tp^2 (c_1-c_2)^2}{16 M (1+s)}}}{(1+s-\tp)\sqrt{1+s}} \left(\frac{\sqrt[4]{1+s-\tp}}{\sqrt{s-\tp}} + \frac{1}{\sqrt{\tp}}\right) \de s\\
&\leq C\left(\int_{\tp}^{2\tp} \frac{e^{-\frac{\tp^2 (c_1-c_2)^2}{16 M (1+2\tp)}}}{(1+s-\tp)\sqrt{1+s}} \left(\frac{\sqrt[4]{1+s-\tp}}{\sqrt{s-\tp}} + \frac{1}{\sqrt{\tp}}\right) \de s + \int_{2\tp}^\infty \frac{1}{(1+s-\tp)^{3/4}(1+\tp)\sqrt{s-\tp}} \de s\right. \\ &\qquad \qquad \qquad \qquad\qquad\qquad\qquad\quad\left. + \int_{2\tp}^\infty \frac{1}{(1+s-\tp)^{5/4}\sqrt{1+\tp}\sqrt{\tp}} \de s \right) \leq \frac{C}{(1+\tp)^{3/4}\sqrt{\tp}},
\end{align*}
for $\tp \in [0,t]$. Second, we establish
\begin{align*}
\int_{\tp}^t \int_{\frac{\tp}{2}}^s \frac{e^{-\tfrac{(r-\tp)^2 (c_1-c_2)^2}{4 M (1+s)}}(1+r)^{-3/4} r^{-1/2}}{ (1+s-\tp)^{3/4} \sqrt{s-\tp} \sqrt{1+s}} \de r \de s
&\leq C\int_{\tp}^t \int_\R \frac{e^{-\tfrac{(r-\tp)^2 (c_1-c_2)^2}{4 M (1+s)}}(1+\tp)^{-3/4} {\tp}^{-1/2} }{(1+s-\tp)^{3/4} \sqrt{s-\tp} \sqrt{1+s}} \de r \de s\\
\leq C\int_{\tp}^t \frac{(1+\tp)^{-3/4} {\tp}^{-1/2}}{(1+s-\tp)^{3/4} \sqrt{s-\tp}} \de s &\leq \frac{C}{(1+\tp)^{3/4}\sqrt{\tp}}.
\end{align*}
for $\tp \in [0,t]$, using $c_1 \neq c_2$. Third, using~\eqref{expest} and $c_1 \neq c_2$, we bound
\begin{align*}
\int_{\tp}^t \int_{\frac{\tp}{2}}^s  \frac{e^{-\tfrac{(r-\tp)^2 (c_1-c_2)^2}{4 M (1+s)}}(1+r)^{-3/4} {r}^{-1/2}}{ (1+s-\tp) \sqrt{\tp} \sqrt{1+s}} \de r \de s
&\leq C\int_{\tp}^t \int_{\frac{\tp}{2}}^s \frac{(1+\tp)^{-3/4}{\tp}^{-1/2}}{(1+s-\tp) (1+|\tp - r|)^{3/4} \sqrt{r} \sqrt[8]{1+s}} \de r \de s \\
\leq C\int_{\tp}^t \frac{(1+\tp)^{-3/4} {\tp}^{-1/2}}{(1+s-\tp)^{9/8}} \de s &\leq \frac{C}{(1+\tp)^{3/4}\sqrt{\tp}},
\end{align*}
for $\tp \in [0,t]$. Finally, the fourth term is, using~\eqref{integralID5},~\eqref{expest} and $c_1 \neq c_2$, estimated by
\begin{align*}
 \int_{\tp}^t &\int_{\frac{\tp}{2}}^s  \frac{e^{-\tfrac{(r-\tp)^2 (c_1-c_2)^2}{4 M (1+s)}}(1+r)^{-1}}{ (1+s-\tp) \sqrt{s-r} \sqrt{1+s}} \left(\frac{\sqrt[4]{1+s-\tp}}{\sqrt{s-\tp}} + \frac{1}{\sqrt{\tp}}\right) \de r \de s\\
&\leq C\int_{\tp}^t\frac{(1+\tp)^{-1}}{1+s-\tp} \left(\int_{\frac{\tp}{2}}^{\tp} \frac{e^{-\tfrac{(r-\tp)^2 (c_1-c_2)^2}{4 M (1+s)}}}{ \sqrt{\tp-r} \sqrt{1+s}}\de r + \int_{\tp}^s \frac{1}{\sqrt{s-r} \sqrt{r-\tp} \sqrt[4]{1+s}} \de r\right) \left(\frac{\sqrt[4]{1+s-\tp}}{\sqrt{s-\tp}} + \frac{1}{\sqrt{\tp}}\right)\de s \\
&\leq C\int_{\tp}^t \frac{(1+\tp)^{-1}}{(1+s-\tp)\sqrt[4]{1+s}} \left(\frac{\sqrt[4]{1+s-\tp}}{\sqrt{s-\tp}} + \frac{1}{\sqrt{\tp}}\right) \de s \leq \frac{C}{(1+\tp)^{3/4}\sqrt{\tp}},
\end{align*}
for $\tp \in [0,t]$. We conclude $J_{52}$ is estimated by
\begin{align*} J_{52} \leq C\int_0^t \frac{e^{-\tfrac{\left(x + c_1 t + \tp (c_2-c_1)\right)^2}{M(1+t)}}}{\sqrt{1+t}(1+\tp)^{3/4} \sqrt{\tp}} \de \tp.\end{align*}
Substituting the latter and~\eqref{estJ51} into~\eqref{estJ52} yields
\begin{align} J_5 \leq C\int_0^t \frac{e^{-\tfrac{\left(x + c_1 t + s (c_2-c_1)\right)^2}{M(1+t)}}}{\sqrt{1+t}(1+s)^{3/4} \sqrt{s}} \de s. \label{estJ53}\end{align}

\paragraph*{Estimate on $J_6$.} All that remains is establishing a bound on $J_6$. Using~\eqref{integralid1},~\eqref{innerest},~\eqref{integralID5} and $c_1 \neq c_2$, we calculate the inner integral in $J_6$ and estimate
\begin{align}
\begin{split}
&J_6 \leq C \frac{e^{-\tfrac{(x+c_1t)^2}{M(1+t)}}}{\sqrt{1+t}} \int_0^t \int_0^s \frac{e^{-\tfrac{(x+c_1 t + (s-r) (c_2-c_1))^2}{M (1+t)}-\tfrac{(s-r)^2 (t-s) (c_1-c_2)^2}{2 M (1+s) (1+t)}}}{\sqrt{1+s}(1+r)} \left(\frac{\sqrt[4]{1+r}}{\sqrt{r}} + \frac{1}{\sqrt{s-r}}\right) \de r \de s\\
&\ \leq C\frac{e^{-\tfrac{(x+c_1t)^2}{M(1+t)}}}{\sqrt{1+t}} \int_0^t \left(\int_0^{\frac{s}{2}} \frac{e^{-\tfrac{s^2 (t-s) (c_1-c_2)^2}{8 M (1+s) (1+t)}}}{\sqrt{1+s}(1+r)} \left(\frac{\sqrt[4]{1+r}}{\sqrt{r}} + \frac{1}{\sqrt{s-r}}\right) \de r\right. \\ & \qquad \qquad  \qquad \qquad  \qquad \left. + \int_{\frac{s}{2}}^s \frac{e^{-\tfrac{(s-r)^2 (t-s) (c_1-c_2)^2}{2 M (1+s) (1+t)}}}{\sqrt{1+s}(1+s)} \left(\frac{\sqrt[4]{1+s}}{\sqrt{s}} + \frac{1}{\sqrt{s-r}}\right)\de r \right) \de s\\
&\ \leq C\frac{e^{-\tfrac{(x+c_1t)^2}{M(1+t)}}}{\sqrt{1+t}} \int_0^t \left(\frac{e^{-\tfrac{s^2 (t-s) (c_1-c_2)^2}{8 M (1+s) (1+t)}}}{\sqrt{1+s}} + \frac{\sqrt{1+t}}{(1+s)^{3/4} \sqrt{s}\sqrt{t-s}} + \frac{\sqrt[4]{1+t}}{(1+s)^{5/4}\sqrt[4]{t-s}}\right) \de s \leq \frac{Ce^{-\tfrac{(x+c_1t)^2}{M(1+t)}}}{\sqrt{1+t}}.
\end{split}\label{estJ6}
\end{align}

\subsubsection{Final nonlinear estimate}
Finally, we are in the position to bound the nonlinear term in~\eqref{Duhamel}. By~\eqref{estI3},~\eqref{estI5},~\eqref{estI4} and~\eqref{estI6} the convolutions in~\eqref{Duhamel} are bounded by
\begin{align*}\left|\int_0^t  \int_{\R} \left(\theta(x-y,t-s) f_1(z(y,s)) + \theta_x(x-y,t-s)g_1(z(y,s))\right)\de y\de s\right| \leq C\eta(t)^2\left(I_* + I_0 + \sum_{i = 1}^6 J_i\right).\end{align*}
We bound the terms $I_*,I_0, J_i, i = 1,\ldots,6$ by~\eqref{estM2},~\eqref{estM4},~\eqref{estJ1},~\eqref{estJ2},~\eqref{estJ3},~\eqref{estJ4},~\eqref{estJ53} and~\eqref{estJ6}, respectively, and establish
\begin{align}\begin{split} &\left|\int_0^t  \int_{\R} \left(\theta(x-y,t-s) f_1(z(y,s)) + \theta_x(x-y,t-s)g_1(z(y,s))\right)\de y\de s\right|\\
&\qquad \leq C\eta(t)^2\left[\frac{e^{-\tfrac{(x+c_1t)^2}{M(1+t)}}}{\sqrt{1+t}} + \int_0^t \frac{e^{-\tfrac{\left(x + t c_1 + s (c_2 - c_1)\right)^2}{M(1+t)}}}{\sqrt{1+t} (1+s)}\left(\frac{\sqrt[4]{1+s}}{\sqrt{s}} + \frac{1}{\sqrt{t-s}}\right)\de s\right].\end{split}\label{irrelevantNL} \end{align}

\subsection{Conclusion}

For initial conditions $(u_0,v_0) \in X_{\rho_E}^\alpha$ and $t \in [0,T)$ such that $\eta(t) \leq r_0$, estimates~\eqref{linexp} and~\eqref{irrelevantNL} on the linear and nonlinear terms in~\eqref{Duhamel} provide the bound
\begin{align*}
\left|u(x,t)\right|\left[\frac{e^{-\tfrac{(x+c_1t)^2}{M(1+t)}}}{\sqrt{1+t}} + \int_0^t \frac{e^{-\tfrac{\left(x + t c_1 + s (c_2 - c_1)\right)^2}{M(1+t)}}}{\sqrt{1+t} (1+s)}\left(\frac{\sqrt[4]{1+s}}{\sqrt{s}} + \frac{1}{\sqrt{t-s}}\right)\de s\right]^{-1} \leq C\left(\delta + \eta(t)^2\right),
\end{align*}
for $x \in \R$. Analogously, one establishes for the other component
\begin{align*}
\left|v(x,t)\right|\left[\frac{e^{-\tfrac{(x+c_2t)^2}{M(1+t)}}}{\sqrt{1+t}} + \int_0^t \frac{e^{-\tfrac{\left(x + t c_2 + s (c_1 - c_2)\right)^2}{M(1+t)}}}{\sqrt{1+t} (1+s)}\left(\frac{\sqrt[4]{1+s}}{\sqrt{s}} + \frac{1}{\sqrt{t-s}}\right)\de s\right]^{-1} \leq C\left(\delta + \eta(t)^2\right),
\end{align*}
and we conclude that~\eqref{etaest3} holds, which proves, as explained in~\S\ref{sec:planIRR}, Theorem~\ref{mainresult2}. $\hfill \Box$

\section{Proof of Theorem~\ref{mainresult3}} \label{sec:proofgrowth}

Take initial conditions $(u_0,v_0) \in C^{0,\alpha}(\R,\R^2) \setminus \{0\}$ such that $u_0(x),v_0(x) \geq 0$ for all $x \in \R$. Without loss of generality, we assume $u_0 \neq 0$. By Proposition~\ref{proplocal} there exists a classical solution $(u,v) \in C^{1,\frac{\alpha}{2}}\left([0,T),C^{2,\alpha}(\R,\R^2)\right)$ to~\eqref{CAS2} on some maximal time interval $[0,T)$, with $T \in (0,\infty]$, having initial condition $(u_0,v_0)$.  We integrate~\eqref{CAS2} and obtain the Duhamel formulation
\begin{align}
u(x,t) &= \int_{\R} \frac{e^{-\tfrac{(x-y+c_1 t)^2}{4 d_1 t}}}{\sqrt{4\pi d_1 t}} u_0(y)\de y + \int_0^t  \int_{\R} \frac{e^{-\tfrac{(x-y+c_1 (t-s))^2}{4 d_1 (t-s)}}}{\sqrt{4\pi d_1 (t-s)}} v(y,s)^2 \de y\de s, \label{DuhamelU}\\
v(x,t) &= \int_{\R} \frac{e^{-\tfrac{(x-y+c_2 t)^2}{4 d_2 t}}}{\sqrt{4\pi d_2 t}} v_0(y)\de y + \int_0^t  \int_{\R} \frac{e^{-\tfrac{(x-y+c_2 (t-s))^2}{4 d_2 (t-s)}}}{\sqrt{4\pi d_2 (t-s)}} u(y,s)^2 \de y\de s, \label{DuhamelV}
\end{align}
for $x \in \R$ and $t \in [0,T)$.

We exploit that~\eqref{CAS2} is translational invariant in time and space, i.e.~that $(u,v)(\cdot+X,\cdot+T)$ is also a solution to~\eqref{CAS2} for each fixed $X, T \in \R$, to relax our assumption on the initial datum $u_0$. First, due to translational invariance in space and since $u_0 \neq 0$ is pointwise nonnegative and continuous, there exist, without loss of generality, some $r,\nu > 0$ such that $u_0(x) \geq \nu$ for $|x| \leq r$. Feeding this lower bound into~\eqref{DuhamelU} yields
\begin{align*}
u(x,t_0) \geq \nu \int_{-r}^r \frac{e^{-\tfrac{(x-y+c_1 t_0)^2}{4 d_1 t_0}}}{\sqrt{4\pi d_1 t_0}} \de y \geq \nu e^{-\tfrac{x^2}{2d_1 t_0}} \int_{-r}^r \frac{e^{-\tfrac{(y-c_1t)^2}{2d_1 t_0}}}{\sqrt{4\pi d_1 t_0}} \de y,
\end{align*}
for $x \in \R$ and $t_0 \in (0,T)$. Hence, there exist ($t_0$-dependent) constants $\nu_0,\alpha > 0$ such that $u(x,t_0) \geq \nu_0 e^{-\alpha x^2}$ for all $x \in \R$. Thus, due to translational invariance in time, we may, without loss of generality, assume $u_0(x) \geq \nu_0e^{-\alpha x^2}$ for all $x \in \R$.

We denote $d_m := \min\{d_1,d_2\} > 0$. Feeding this lower bound into~\eqref{DuhamelU} and integrating with the aid of~\eqref{integralid1} yields
\begin{align*}
u(x,t) \geq \frac{\nu_0 \sqrt{d_m}}{\sqrt{d_1}} \int_\R \frac{e^{-\tfrac{(x-y+c_1 t)^2}{4 d_m t} - \alpha y^2}}{\sqrt{4\pi d_m t}} \de y = \frac{\nu_0 \sqrt{d_m} e^{-\tfrac{\alpha (x + c_1 t)^2}{1 + 4 \alpha d_m t}}}{\sqrt{d_1} \sqrt{1+ 4 \alpha d_m t}},
\end{align*}
for $x \in \R$ and $t \in [0,T)$. Substituting the latter into~\eqref{DuhamelV} and employing~\eqref{integralid1} again gives
\begin{align}
v(x,t) \geq \frac{\nu_0^2 d_m^{3/2}}{d_1 \! \sqrt{d_2}} \int_0^t \int_\R \frac{e^{-\tfrac{(x-y+c_2 (t-s))^2}{2 d_m (t-s)} - 2\tfrac{\alpha (y+c_1s)^2}{1+4\alpha d_m s}}\de y \de s}{\sqrt{4\pi d_m(t-s)}(1+4\alpha d_m s)} = \frac{\nu_0^2 d_m^{3/2}}{d_1\!\sqrt{2 d_2}} \int_0^t \frac{e^{-\tfrac{2 \alpha \left(x + c_2 t + s (c_1-c_2)\right)^2}{1 + 4 \alpha d_m t}}\de s}{\!\sqrt{(1 + 4 \alpha d_m s) (1 + 4 \alpha d_m t)}}, \label{LB1}
\end{align}
for $x \in \R$ and $t \in [0,T)$. We apply Jensen's inequality to~\eqref{DuhamelU} and establish
\begin{align*}
u(x,t) \geq \int_0^t \left(\int_\R \frac{e^{-\tfrac{(x-y+c_1 (t-s))^2}{4 d_1 (t-s)} - \alpha y^2}}{\sqrt{4\pi d_1 (t-s)}}  v(y,s)  \de y\right)^2 \de s,
\end{align*}
Plugging the lower bound~\eqref{LB1} into the latter we use~\eqref{integralid1} again to evaluate the integral over $y$ and obtain
\begin{align}
\begin{split}
u(x,t) &\geq \frac{\nu_0^4 d_m^4}{2 d_1^3 d_2} \int_0^t \left( \int_0^s \int_\R \frac{e^{-\tfrac{(x-y+c_1 (t-s))^2}{2 d_m (t-s)} - \frac{2 \alpha \left(x + c_2 s + r (c_1-c_2)\right)^2}{1 + 4 \alpha d_m s}}}{\sqrt{4\pi d_m (t-s) (1 + 4 \alpha d_m r) (1 + 4 \alpha d_m s)} } \de y \de r \right)^2 \de s\\
&\geq \frac{\nu_0^4 d_m^4}{4 d_1^3 d_2} \int_0^t \left(\int_0^s \frac{e^{-\tfrac{2 \alpha (x + c_1 t + (s-r)(c_2 - c_1))^2}{1+4 \alpha d_m t}}}{\sqrt{(1 + 4 \alpha d_m s) (1 + 4 \alpha d_m t)}} \de r \right)^2 \de s,
\end{split} \label{keyLB}
\end{align}
for $x \in \R$ and $t \in [0,T)$.

\paragraph*{Lower bound in the $L^1$-norm.} We establish the lower bound in~\eqref{LB} in the $L^1$-norm. By~\eqref{keyLB} and~\eqref{integralid1} it holds
\begin{align*}
\|u(\cdot,t)\|_1 &\geq \frac{\nu_0^4 d_m^4}{4 d_1^3 d_2} \int_\R \int_0^t \int_0^s \int_0^s \frac{e^{-\tfrac{2 \alpha (x + c_1 t + (s-r)(c_2 - c_1))^2}{1+4 \alpha d_m t}-\tfrac{2 \alpha (x + c_1 t + (s-p)(c_2 - c_1))^2}{1+4 \alpha d_m t}}}{(1 + 4 \alpha d_m s) (1 + 4 \alpha d_m t)} \de r \de p \de s \de x\\
&= \frac{\nu_0^4 d_m^4 \sqrt{\pi}}{8 d_1^3 d_2\sqrt{\alpha}} \int_0^t \int_0^s \int_0^s \frac{e^{-\tfrac{\alpha (c_1-c_2)^2 (r-p)^2}{1+4 \alpha d_m t}}}{(1+ 4 \alpha d_m s) \sqrt{1+4 \alpha d_m t}}\de r \de p \de s =: J_*,
\end{align*}
for $t \in [0,T)$. So, if $c_1 \neq c_2$, we use that $z\text{erf}(z) + \pi^{-1/2} e^{-z^2}$ is a primitive of the error function $\text{erf}(z)$ to compute
\begin{align}
\|u(\cdot,t)\|_1 &\geq J_* =  \frac{\nu_0^4 d_m^4 \sqrt{\pi}}{8 d_1^3 d_2 \sqrt{\alpha}} \int_0^t \frac{\left(e^{-\tfrac{\alpha (c_1-c_2)^2 s^2}{1+ 4 \alpha d_m t}} - 1\right)\sqrt{1+4\alpha d_m t} + \sqrt{\alpha \pi} |c_1-c_2| s \ \text{erf}\left(\frac{\sqrt{\alpha} |c_1-c_2| s}{\sqrt{1+4 \alpha d_m t}}\right)}{\alpha (c_1-c_2)^2(1 + 4 \alpha d_m s)} \de s \nonumber\\
&\geq \frac{\nu_0^4 d_m^4 \sqrt{\pi}}{16 d_1^3 d_2\alpha \sqrt{\alpha}}  \int_{\frac{t}{2}}^t \frac{-2\sqrt{1+4\alpha d_m t} + \sqrt{\alpha \pi} |c_1-c_2| t \ \text{erf}\left(\frac{\sqrt{\alpha} |c_1-c_2| t}{2\sqrt{1+4 \alpha d_m t}}\right)}{(c_1-c_2)^2(1 + 4 \alpha d_m s)} \de s \label{L1c1nc2}\\
&\geq \frac{\nu_0^4 d_m^4 \sqrt{\pi} \ t \left(-2\sqrt{1+4\alpha d_m t} + \sqrt{\alpha \pi} |c_1-c_2| t \ \text{erf}\left(\frac{\sqrt{\alpha} |c_1-c_2| t}{2\sqrt{1+4 \alpha d_m t}}\right)\right)}{32 d_1^3 d_2 \alpha^2 (c_1-c_2)^2 \sqrt{\alpha} (1+4\alpha d_m t)},\nonumber
\end{align}
for $t \in [0,T)$. The latter clearly yields the lower bound~\eqref{LB} in the $L^1$-norm for $c_1 \neq c_2$ as the right hand side of~\eqref{L1c1nc2} grows linearly with $t$ as $t \to \infty$. On the other hand, if $c_1 = c_2$, we have
\begin{align}
\|u(\cdot,t)\|_1 &\geq J_* \geq \frac{\nu_0^4 d_m^4\sqrt{\pi} }{8 d_1^3 d_2 \sqrt{\alpha }} \int_{\frac{t}{2}}^t \frac{s^2}{(1+ 4 \alpha d_m s) \sqrt{1+4 \alpha d_m t}} \de s
\geq \frac{\nu_0^4 d_m^4\sqrt{\pi} \ t^3}{64 d_1^3 d_2 \sqrt{\alpha} (1+4\alpha d_m t)^{3/2}}, \label{L1c1c2}
\end{align}
for $t \in [0,T)$. This proves the lower bound~\eqref{LB} in the $L^1$-norm for $c_1 = c_2$.

\paragraph*{Lower bound in the $L^\infty$-norm.}  Next we prove the lower bound in~\eqref{LB} in the $L^\infty$-norm. Thus, if $c_1 \neq c_2$, we employ~\eqref{keyLB} to estimate
\begin{align}
\begin{split}
\|u(\cdot,t)\|_\infty &\geq u(-c_1 t, t) \geq \frac{\nu_0^4 d_m^4 \pi}{32 \alpha d_1^3 d_2} \int_0^t \frac{\text{erf}\left(s \sqrt{\frac{2\alpha }{1 + 4 \alpha d_m t}} \left|c_2-c_1\right|\right)^2}{(c_1-c_2)^2(1 + 4 \alpha d_m s)} \de s\\
&\geq \frac{\nu_0^4 d_m^4 \pi}{32 \alpha d_1^3 d_2} \int_{\sqrt{t}}^t \frac{\text{erf}\left( \sqrt{\frac{2\alpha t}{1 + 4 \alpha d_m t}} \left|c_2-c_1)\right|\right)^2}{(c_1-c_2)^2(1 + 4 \alpha d_m s)} \de s\\
&= \frac{\nu_0^4 d_m^4 \pi \log\left(\frac{1 + 4\alpha t}{1+4\alpha \sqrt{t}}\right)\text{erf}\left( \sqrt{\frac{2\alpha t}{1 + 4 \alpha d_m t}} \left|c_2-c_1)\right|\right)^2}{128 \alpha^2 d_1 \sqrt{d_1 d_2} (c_1-c_2)^2},\end{split} \label{Linftyc1nc2}
\end{align}
for $t \in [0,T)$. This proves the lower bound~\eqref{LB} in the $L^\infty$-norm for $c_1 \neq c_2$ as the right hand side of~\eqref{Linftyc1nc2} grows logarithmically with $t$ as $t \to \infty$. On the other hand, if $c_1 = c_2$, then~\eqref{keyLB} yields
\begin{align}
\|u(\cdot,t)\|_\infty &\geq u(-c_1 t, t) \geq \frac{\nu_0^4 d_m^4}{4 d_1^3 d_2} \int_{\frac{t}{2}}^t \frac{s^2}{(1 + 4 \alpha d_m s) (1 + 4 \alpha d_m t)} \de s
\geq \frac{\nu_0^4 d_m^4 t^3}{32 d_1^3 d_2 (1+4\alpha d_m t)^2}, \label{Linftyc1c2}
\end{align}
for $t \in [0,T)$. This proves the bound~\eqref{LB} in the $L^\infty$-norm for $c_1 = c_2$ and concludes the proof. $\hfill \Box$

\begin{remark}
{\upshape
We expect that, as in~\cite{ESCHER}, one can establish a priori upper bounds on global solutions to~\eqref{CAS2}, which are incompatible with the lower bounds~\eqref{LB}. This would imply by Proposition~\ref{proplocal} that the solutions in Theorem~\ref{mainresult3} blow up in finite time, i.e.~it holds $T < \infty$.
}\end{remark}

\begin{remark}{
\upshape
It is open whether Theorem~\ref{mainresult3} can be extended to establish growth, possibly with other lower bounds than~\eqref{LB}, of nontrivial solutions with nonnegative initial conditions in systems of the form
\begin{align*}
\begin{split}
u_t &= d_1u_{xx} + c_1 u_x + v^p,\\
v_t &= d_2v_{xx} + c_2 v_x + u^q,
\end{split} \qquad t \geq 0, x \in \R,
\end{align*}
allowing for different velocities $c_1 \neq c_2$ and, as in~\cite{ESCHER}, for both relevant and marginal nonlinearities, i.e.~having $2 \leq p,q \leq 3$. The obtained lower bounds in the $L^1$- and $L^\infty$-norms for $p = q = 2$ are less strong when the velocities are different; indeed, compare~\eqref{L1c1nc2} with~\eqref{L1c1c2} and~\eqref{Linftyc1nc2} with~\eqref{Linftyc1c2}. On the one hand, this might indicate that our result does not extend to the case of marginal nonlinearities and different velocities. On the other hand, it could be that the case of different velocities just requires more iterations of estimates via Duhamel's formula to obtain the desired lower bounds.
}\end{remark}

\section{Proof of Theorem~\ref{mainresult4}} \label{sec:proofMR4}

In this proof, $C>1$ denotes a constant, which is independent of $\delta,x$ and $t$ and that will be taken larger if necessary.

\subsection{Plan of proof} \label{sec:planMR4}

In order to eliminate the Burgers' term $\gamma (u^2)_x$ from the $v$-equation in~\eqref{CAS3}, we introduce the comoving coordinate $\xt = x + c_1 t$ and make the normal form transform $\vt = v + \frac{\gamma}{c} u^2$, where we denote $c := c_2 - c_1 \neq 0$. Thus, any solution $(u,v)$ to~\eqref{CAS3} yields a solution
\begin{align} \ut(\xt,t) = u(\xt - c_1t,t), \qquad \vt(\xt,t) = v(\xt-c_1t,t) + \frac{\gamma}{c} u(\xt-c_1t,t)^2, \label{defutvt} \end{align}
satisfying system
\begin{align}
\begin{split}
\ut_t &= d_1\ut_{\xt\xt} + \alpha \ut\vt - \mu \ut^3,\\
\vt_t &= d_2\vt_{\xt\xt} + c \vt_\xt + \frac{\gamma}{c}\left((d_1-d_2)\left(\ut^2\right)_{\xt\xt} - 2d_1 \ut_\xt^2 + 2\alpha \ut^2 \vt - 2\mu \ut^4\right).
\end{split} \label{CAS32}
\end{align}
where $\mu := \frac{\gamma \alpha}{c} - \beta > 0$.

Proposition~\ref{proplocal} provides local existence and uniqueness on some maximal time interval $[0,T)$, with $T \in (0,\infty]$, of a classical solution $(u,v) \in C^{1,\frac{1}{4}}([0,T),C^{2,\frac{1}{2}}(\R,\R^2))$ to~\eqref{CAS3} having initial condition $(u_0,v_0) \in H^2(\R,\R^2) \subset C^{0,\frac{1}{2}}(\R,\R^2) \cap L^\infty(\R,\R^2)$ by Morrey's inequality. Clearly, this yields a solution $(\ut,\vt) \in C^{1,\frac{1}{4}}\left([0,T),C^{2,\frac{1}{2}}(\R,\R^2)\right)$ to~\eqref{CAS32} given by~\eqref{defutvt} with initial condition $(u_0,v_0 + \frac{\gamma}{c} u_0^2)$. Since $\|(u,v)(\cdot,t)\|_\infty$ blows up as $t \uparrow T$ by~\eqref{blowuplinfty}, $\|(\ut,\vt)(\cdot,t)\|_\infty$ must also blow up as $t \uparrow T$ by~\eqref{defutvt}.

In order to exploit the negative sign of the cubic term in the $\ut$-equation in~\eqref{CAS32}, we decompose the $\ut$-variable into an explicit, leading-order Gaussian term and a remainder whose integral vanishes. Thus, we define $R(\xt,t)$ by
\begin{align}
R(\xt,t) = \ut(\xt,t) - \sigma(\xt,t) \int_\R \ut(y,t) \de y, \qquad \sigma(\xt,t) = \frac{e^{-\tfrac{\xt^2}{4d_1(1+t)}}}{\sqrt{4\pi d_1(1+t)}}, \label{defR}
\end{align}
and observe that $\hat{R}(0,t) = 0$ holds for all $t \in [0,T)$, where~~$\hat{ }$~~denotes the Fourier transform
\begin{align}
\hat{g}(k) = \int_\R e^{-\mathrm{i} k\xt} g(\xt) \de \xt, \label{defFourier}
\end{align}
for $g \in L^2(\R)$.

We aim to control both $\ut(\xt,t)$ and $R(\xt,t)$ in the nonlinear iteration and show that the remainder $R(\xt,t)$ exhibits stronger temporal decay rate. Thus, we take $M_0 = \max\{8d_1,8d_2,1\}$, let $M \geq M_0$ and define the spatio-temporal weight $\eta \colon [0,T) \to [0,\infty)$ by
\begin{align*}
 \eta(t) &=\! \sup_{\begin{smallmatrix} \xt \in \R \\ 0 \leq s \leq t \end{smallmatrix}} \left(\sqrt{1+s}e^{\tfrac{\xt^2}{M(1+s)}}\left(\left|\ut(\xt,s)\right| + \sqrt{s}\left|\ut_\xt(\xt,s)\right|\right) + \log^{\frac{3}{4}}(s+2) \left(\left\|\hat{R}(\cdot,s)\right\|_\infty  + \sqrt{1+s}\left\|\hat{R}(\cdot,s)\right\|_1\right) \hspace{-0.2cm}\phantom{\frac{e^{\tfrac{\xt^2}{M}}}{\sqrt{s}}}\!\right.\\
 &\qquad \qquad \ \left. + \ \|\ut(\cdot,s)\|_{H^2} + \left|\vt(\xt,s)\right|\left[\frac{e^{-\tfrac{(\xt+cs)^2}{M(1+s)}}}{\sqrt{1+s}} + \int_0^s \frac{e^{-\tfrac{\left(\xt + (s-r) c)\right)^2}{M(1+s)}}}{\sqrt{1+s}\sqrt{1+r}\sqrt{r}}\left(\frac{\sqrt[8]{1+r}}{r^{3/8}} + \frac{1}{\sqrt{s-r}}\right)\de r\right]^{-1} \right).
 \end{align*}
As in the proof of Theorem~\ref{mainresult1}, see~\S\ref{sec:planM}, the result follows by showing that, if $t \in [0,T)$ is such that $\eta(t)$ is bounded by a sufficiently small constant $\eta_0 > 0$, then $\eta(t)$ satisfies the key inequality
\begin{align} \eta(t) \leq C\left(\sqrt{\delta} + \eta(t)^2\right). \label{etaest5}\end{align}
%
%

\subsection{Damping estimate}
Let $t \in [0,T)$ such that $\eta(t) \leq \eta_0$, where we will take $\eta_0 > 0$ smaller if necessary. The normal form transform introduces second derivatives to the system, which we control via an $L^2$-damping estimate. Assume the solution $\phi(s) := (\ut,\vt)(s)$ to~\eqref{CAS32} lies in $H^4(\R,\R^2)$ for $s \in [0,T)$. Taking the $L^2$-inner product on both sides in~\eqref{CAS32} with $\psi(s) = (\psi_1,\psi_2)(s) = \sum_{j = 0}^2 (-1)^j \partial_\xt^{2j} \phi(s)$ and integrating by parts yields
\begin{align}
\begin{split}
\frac{1}{2} \frac{\de}{\de s} \left(\|\phi(s)\|_{H^2}^2\right) = \langle \phi_s,\psi \rangle_2 &\leq -\theta \left\|\partial_\xt^3 \phi\right\|_2^2 + C\|\phi\|_{H^2}\|\phi\|_{H^3}\\
&\qquad + C\left[\left|\langle \partial_\xt^2 (\ut^2), \psi_2 \rangle_2\right| + \left|\langle \ut_\xt^2, \psi_2 \rangle_2\right| + \sum_{j = 1}^2 \sum_{\begin{smallmatrix} p, q \in \Z_{\geq 0} \\ 2 \leq p+q\leq 4\end{smallmatrix}} \left|\langle \ut^p\vt^q, \psi_j \rangle_2\right| \right],
\end{split}\label{eest1}
\end{align}
for $s \in [0,t]$, with $D = \text{diag}(d_1,d_2)$ and $\theta = \min\{d_1,d_2\} > 0$, suppressing the $s$-dependency of the right hand side. We have now estimated the inner products in~\eqref{eest1} coming from the linear terms in~\eqref{CAS32}. We proceed by bounding the inner products originating from nonlinear contributions. Using $\eta(s) \leq \eta(t) \leq \eta_0 \leq 1$ and the Sobolev embedding $H^2(\R) \hookrightarrow W^{1,\infty}(\R)$, we integrate by parts to obtain
\begin{align*}
\begin{split}
\left|\langle \ut^p\vt^q, \psi_j \rangle_2\right| &\leq C\|\phi\|_\infty^{p+q-1} \|\phi\|_{H^1}\|\phi\|_{H^3} \leq C\eta_0 \|\phi\|_{H^3}^2,\\
\left|\langle \partial_\xt^2 (\ut^2), \psi_2 \rangle_2\right| &\leq C\left(\|\ut\|_\infty\|\ut\|_{H^3} \|\phi\|_{H^3} + \|\ut\|_{W^{1,\infty}}\|\ut\|_{H^2}\|\phi\|_{H^3}\right)
\leq C\eta_0 \|\phi\|_{H^3}^2,\\
\left|\langle \ut_\xt^2, \psi_2 \rangle_2\right| &\leq C\|\ut\|_{W^{1,\infty}}\|\phi\|_{H^2}\|\phi\|_{H^3} \leq C\eta_0\|\phi\|_{H^3}^2,
\end{split}
\end{align*}
for $j = 0,1$ and $p,q \in \Z_{\geq 0}$ such that $2 \leq p+q \leq 4$. Now, we apply the latter estimates to~\eqref{eest1}, choose $\eta_0 > 0$ so small such that $C\eta_0 \leq \theta/4$ and we use Young's inequality to bound $C\|\phi\|_{H^2}\|\phi\|_{H^3} \leq \frac{\theta}{4} \|\partial_\xt^3 \phi\|_{2}^2 + \tilde{C}_1 \|\phi\|_{H^2}^2$ for some constant $\tilde{C}_1 > 1$, to infer
\begin{align}
\frac{1}{2} \frac{\de}{\de s} \left(\|\phi(s)\|_{H^2}^2\right) \leq -\theta \left\|\partial_\xt^3 \phi\right\|_2^2 + C\left(\|\phi\|_{H^2}\|\phi\|_{H^3} + \eta_0 \|\phi\|_{H^3}^2\right) \leq -\frac{\theta}{2}\left\|\partial_\xt^3 \phi\right\|_2^2 + \left(\tilde{C}_1 + C\eta_0\right) \|\phi\|_{H^2}^2, \label{eest3}
\end{align}
for $s \in [0,t]$. By Sobolev interpolation and Young's inequality it holds $(\frac{1}{2} + \tilde{C}_1 + C\eta_0)\|\phi\|_{H^2}^2 \leq \frac{\theta}{2} \|\partial_\xt^3 \phi\|_2^2 + \tilde{C}_2 \|\phi\|_2^2$ for some constant $\tilde{C}_2 > 1$. Thus,~\eqref{eest3} boils down to
\begin{align*}
\frac{1}{2} \frac{\de }{\de s} \left(\|\phi(s)\|_{H^2}^2\right)  \leq -\frac{1}{2}\left\|\phi(s)\right\|_{H^2}^2 + \tilde{C}_2 \|\phi(s)\|_2^2.
\end{align*}
for $s \in [0,t]$. We multiply by $e^s$, integrate from $0$ to $t$, establish
\begin{align*}
\|\phi(t)\|_{H^2}^2 \leq e^{-t} \|\phi(0)\|_{H^m}^2 + C \int_0^t e^{-(t-s)} \|\phi(s)\|_2^2 \de s,
\end{align*}
and we arrive at the damping estimate
\begin{align}
 \|\ut(t)\|_{H^2}, \|\vt(t)\|_{H^2} \leq C \left(\delta + \sup_{0 \leq s \leq t} \|\phi(s)\|_2\right). \label{boundderiv}
\end{align}
In fact,~\eqref{boundderiv} holds even without the assumption that the solution $(\ut,\vt)(s)$ to~\eqref{CAS32} lies in $H^4(\R,\R^2)$ for $s \in [0,T)$. Indeed, we know that it holds $(\ut,\vt) \in C^{1,\frac{1}{4}}\left([0,T),C^{2,\frac{1}{2}}(\R,\R^2)\right)$ and $H^4(\R,\R^2)$ lies dense in $C^{2,\frac{1}{2}}(\R,\R^2)$.

\subsection{Estimates on the \texorpdfstring{$\vt$}{ }-component}
We denote by $\V$ the drifting Gaussian
\begin{align*} \V(\xt,t) := \frac{e^{-\tfrac{(\xt+c t)^2}{4d_2t}}}{\sqrt{4\pi d_2t}},\end{align*}
and let $z_0 := v_0 + \frac{\gamma}{c} u_0^2$. We integrate the $\vt$-equation in~\eqref{CAS32} and obtain, after integration by parts, the Duhamel formulation
\begin{align}
\begin{split}
\vt(\xt,t) &= \int_{\R} \V(\xt-y,t) \vt_0(y)\de y + \frac{2\gamma(d_1-d_2)}{c} \int_0^t \int_\R \V_\xt(\xt-y,t-s) \ut_\xt(y,s)\ut(y,s)\de y \de s\\
&\qquad - \frac{2\gamma}{c} \int_0^t  \int_{\R} \V(\xt-y,t-s) \left(d_1(\ut_\xt(y,s))^2 - \alpha (\ut(y,s))^2 \vt(y,s) + \mu (\ut(y,s))^4 \right)\de y \de s,
\end{split} \label{Duhamelv}
\end{align}
for $\xt \in \R$ and $t \in [0,T)$. We estimate the linear and nonlinear terms in~\eqref{Duhamelv} one by one.

\paragraph*{Linear estimates.} For $(u_0,v_0) \in H^2(\R,\R^2)$ satisfying~\eqref{initcond4}, we proceed as in~\eqref{linexp} and use $M \geq 4d_2$ and $\delta \leq 1$ to estimate the linear term in~\eqref{Duhamelv} as
\begin{align}
\left|\int_{\R} \V(\xt-y,t) \vt_0(y)\de y \right| \leq C \delta \frac{e^{-\tfrac{(\xt+c t)^2}{M(1+t)}}}{\sqrt{1+t}}.  \label{linestv}
\end{align}

\paragraph*{Nonlinear estimates.} The first convolution in~\eqref{Duhamelv} can be estimated using the boundedness of $x \mapsto xe^{-x^2}$ on $\R$ and identity~\eqref{integralid1} to compute the integral over $y$, which yields
\begin{align}
\begin{split}
\left|\int_0^t \V_\xt(\xt-y,t-s) \ut_\xt(y,s)\ut(y,s)\de y \de s\right| &\leq C\eta(t)^2 \int_0^t\int_{\R} \frac{e^{-\tfrac{(\xt-y+c(t-s))^2}{M(t-s)} - \tfrac{y^2}{M(1+s)}}}{(t-s)(1+s)\sqrt{s}} \de y\de s\\
&\leq C\eta(t)^2 \int_0^t \frac{e^{-\tfrac{(\xt+c(t-s))^2}{M(1+t)}}}{\sqrt{1+t}\sqrt{t-s}\sqrt{1+s}\sqrt{s}} \de s.
\end{split} \label{vnl1}
\end{align}

For the following nonlinear term in~\eqref{Duhamelv} we first use the Sobolev embedding $H^2(\R) \hookrightarrow W^{1,\infty}(\R)$ to estimate
\begin{align*}|\ut_\xt(y,s)|^2 \leq |\ut_\xt(y,s)| \|\ut_\xt(\cdot,s)\|_{\infty}^{3/4} \|\ut(\cdot,s)\|_{H^2}^{1/4} &\leq C\eta(t)^2 \frac{e^{-\tfrac{y^2}{M(1+s)}}}{\left[(1+s)s\right]^{7/8}}, \end{align*}
for $s \in [0,t]$ and $y \in \R$. So, applying~\eqref{integralid1} and using $M \geq 4d_2$ and $\eta(t) \leq \eta_0$, we establish the bound
\begin{align}
\begin{split}
&\left|\int_0^t \V(\xt-y,t-s) \left(d_1 (\ut_\xt(y,s))^2 + \mu (\ut(y,s))^4\right)\de y \de s\right|\\
&\ \ \leq C\eta(t)^2 \int_0^t \int_{\R} \frac{e^{-\tfrac{(\xt-y+c(t-s))^2}{M(t-s)} - \tfrac{y^2}{M(1+s)}}}{\sqrt{t-s}\left[(1+s)s\right]^{7/8}} \de y\de s \leq C \eta(t)^2 \int_0^t \frac{e^{-\tfrac{(\xt+c(t-s))^2}{M(1+t)}}}{\sqrt{1+t}(1+s)^{3/8}s^{7/8}} \de s.
\end{split} \label{vnl2}
\end{align}

Using $M \geq 8d_2$, the remaining term in~\eqref{Duhamelv} is estimated by
\begin{align}
\left| \int_0^t  \int_{\R} 2\V(\xt-y,t-s) (\ut(y,s))^2 \vt(y,s) \de y \de s\right| \leq C\eta(t)^2(I_0 + K_1), \label{vnl3}
\end{align}
where
\begin{align*}
K_1 = \int_0^t \int_0^s \int_\R \frac{e^{-\tfrac{2(\xt-y+c(t-s))^2}{M(t-s)} - \tfrac{(y+c(s-r))^2}{M(1+s)}-\tfrac{y^2}{M(1+s)}}}{(1+s)^{3/2}\sqrt{t-s}\sqrt{1+r}\sqrt{r}}\left(\frac{\sqrt[8]{1+r}}{r^{3/8}} + \frac{1}{\sqrt{s-r}}\right) \de y \de r \de s,
\end{align*}
and $I_0$ is as in~\eqref{defIj} with $c_1 = c$ and $c_2 = 0$. We note that an estimate on $I_0$ has already been obtained in~\eqref{estM4}. Using~\eqref{integralid1} we calculate the inner integral in $K_1$ and obtain
\begin{align*}
\begin{split}
K_1 &\leq C \int_0^t \int_r^t \frac{e^{-\tfrac{\left(\xt + c(t - r)\right)^2}{M(1+t)} - \tfrac{\left(\xt+ c(t - s)\right)^2}{M (1+t)}-\tfrac{(s-r)^2 (t - s) c^2}{4 M (1+s) (1+t)}}}{\sqrt{1+t}(1+s)\sqrt{1+r}\sqrt{r}}\left(\frac{\sqrt[8]{1+r}}{r^{3/8}} + \frac{1}{\sqrt{s-r}}\right) \de s \de r\\
&\leq C\int_0^t \frac{e^{-\tfrac{\left(\xt + c(t - r)\right)^2}{M(1+t)}}}{\sqrt{1+t}(1+r)^{3/8} r^{7/8}} \left(\int_r^{\max\{r,\frac{t}{2}\}} \frac{e^{-\tfrac{(s-r)^2 t c^2}{8 M (1+s) (1+t)}}}{\sqrt{1+s}} \de s + \int_{\frac{t}{2}}^t \frac{1}{1+t} \de s + \int_r^t \frac{(1+s)^{-3/4}}{\sqrt{s-r}} \de s\right) \de r.
\end{split}
\end{align*}
We use $c \neq 0$ and~\eqref{integralID4} to estimate the first integral over $s$ in the above for $t \geq 1$ (for $0 \leq t \leq 1$ the estimate is trivial) and establish
\begin{align} K_1 &\leq C\int_0^t \frac{e^{-\tfrac{\left(\xt + c(t - r)\right)^2}{M(1+t)}}}{\sqrt{1+t}(1+r)^{3/8} r^{7/8}} \de r. \label{vnl4}\end{align}

\paragraph*{Final estimate.} Finally, applying the estimates~\eqref{estM4},~\eqref{linestv},~\eqref{vnl1},~\eqref{vnl2},~\eqref{vnl3} and~\eqref{vnl4} on the linear and nonlinear terms in~\eqref{Duhamelv} we establish
\begin{align}
|\vt(\xt,t)| \left[\frac{e^{-\tfrac{(\xt+ct)^2}{M(1+t)}}}{\sqrt{1+t}} + \int_0^t \frac{e^{-\tfrac{\left(\xt + (t-s) c)\right)^2}{M(1+t)}}}{\sqrt{1+t}\sqrt{1+s}\sqrt{s}}\left(\frac{\sqrt[8]{1+s}}{s^{3/8}} + \frac{1}{\sqrt{t-s}}\right)\de s\right]^{-1} \leq C\left(\delta + \eta(t)^2\right). \label{finalv}
\end{align}

\subsection{Estimates on the \texorpdfstring{$\ut$}{ }-component}

As explained in~\S\ref{sec:planMR4}, we control the cubic term in~\eqref{CAS32} by decomposing the $\ut$-variable into an explicit, leading-order Gaussian term and a remainder $R(\xt,t)$ whose integral vanishes, see~\eqref{defR}. Applying the Fourier transform~\eqref{defFourier} to~\eqref{defR} yields
\begin{align}
\hat{R}(k,t) = \hat{\ut}(k,t) - \hat{\sigma}(k,t) A(t), \qquad k \in \R, \label{defRFourier}
\end{align}
with $A(t) := \hat{\ut}(0,t)$ and $\hat{\sigma}(k,t) := e^{-d_1k^2 (t+1)}$. The $\ut$-equation in~\eqref{CAS32} reads in Fourier space
\begin{align}
\hat{\ut}_t &= -d_1k^2 \hat{\ut} + \frac{\alpha}{2\pi} \ \hat{\ut} \ast \hat{\vt} - \frac{\mu}{4\pi^2} \ \hat{\ut}^{\ast 3}.\label{utFourier}
\end{align}
Using~\eqref{defRFourier} and~\eqref{utFourier}, we derive the following equations for $A(t)$ and $\hat{R}(k,t)$
\begin{align}
\frac{\de A}{\de t}(t)   &= -\frac{\nu}{1+t} A(t)^3 + g(t) + B(0,t), \label{Aeq}\\
\frac{\de \hat{R}}{\de t}(k,t) &= -d_1k^2 \hat{R}(k,t) + N(k,t) - N(0,t)\hat{\sigma}(k,t) + B(k,t) - B(0,t)\hat{\sigma}(k,t), \label{Req}
\end{align}
for $k \in \R$, with $\nu := \frac{\mu}{4\sqrt{3}d_1 \pi} > 0$ and
\vspace{-0.2cm}
\begin{align*}
g(t) &:= -\frac{\mu}{4\pi^2} \left[\hat{R} \ast \left(3A \left(\hat{\sigma} \ast \hat{R}\right) + 3A^2 \hat{\sigma}^{\ast 2} + \hat{R}^{\ast 2}\right)\right](0,t),\\
N(k,t) &:= -\frac{\mu}{4\pi^2} \ \hat{\ut}^{*3}(k,t), \qquad B(k,t) := \frac{\alpha}{2\pi} \left(\hat{\ut} \ast \hat{\vt}\right)(k,t).
\end{align*}
In the following we bound the $\ut$-component by estimating the leading order part $\sigma(\xt,t)A(t)$ and the remainder $R(\xt,t)$ in~\eqref{defR} separately. By exploiting the negative sign of $\nu$ in the nonlinear ODE~\eqref{Aeq}, we derive that $A(t)$ decays logarithmically over time, whereas the fact that the nonlinearity in~\eqref{Req} vanishes at $k = 0$ yields an additional algebraic decay factor for the remainder term $R(\xt,t)$. All in all, we gain a logarithmic decay factor for $\ut(\xt,t)$ which is enough to control the cubic term $-\mu \ut^3$ in~\eqref{CAS32}.

\subsubsection{Analysis of the \texorpdfstring{$A$}{ }-equation}  Define the set
\begin{align*}
 \Es = \left\{s \in [0,T) : |A(s)| \sqrt{\left(\delta + \eta(s)^2\right)^{-2} + 2 \nu \log(1+s)} \leq 1\right\}.
\end{align*}
Clearly, $0 \in \Es$ as $|A(0)| \leq \|\hat{\ut}(\cdot,0)\|_\infty \leq \|\ut(\cdot,0)\|_1 = \|u_0\|_1 \leq \delta$ by~\eqref{initcond4}. We aim to show that, in fact, we have $t \in \Es$, provided $\eta_0 > 0$ sufficiently small. Assume by contradiction that $t \in [0,T) \setminus \Es$ and let $t_1 = \inf\{s \in [0,T) : \text{ for all } r \in [s,t] \text{ we have } r \notin \Es\}$. Then, by continuity of $A(t)$ (recall $A(t) = \int_\R \ut(\xt,t)\de \xt$ and $\ut \in C^{1,\frac{\alpha}{2}}([0,T),C^{2,\alpha}(\R))$), it holds
\begin{align} 0 < A(s)^{-2} \leq \left(\delta + \eta(s)^2\right)^{-2} + 2 \nu \log(1+s), \qquad A(t_1)^{-2} = \left(\delta + \eta(t_1)^2\right)^{-2} + 2 \nu \log(1+t_1), \label{Aid}
\end{align}
for $s \in [t_1,t]$. So, by~\eqref{Aeq} we obtain for all $s \in [t_1,t]$
\begin{align*}
\frac{1}{2} \frac{\de }{\de s}\left(A(s)^{-2}\right) &= \frac{\nu}{1+s} - A(s)^{-3} g(s) - A(s)^{-3} B(0,s).
\end{align*}
Integrating the latter from $t_1$ to $t$ and using~\eqref{Aid} yields
\begin{align}
A(t)^{-2} = \left(\delta + \eta(t_1)^2\right)^{-2} + 2\nu \log(t+1) - 2\int_{t_1}^t A(s)^{-3} \left(g(s) + B(0,s)\right) \de s. \label{AeqINT}
\end{align}
To bound the integral over $A(s)^{-3}g(s)$ in~\eqref{AeqINT} we first estimate using Young's convolution inequality
\begin{align*}
|\hat{R}^{\ast 3}(0,s)| &\leq \|\hat{R}^{\ast 3}(\cdot,s)\|_\infty \leq \|\hat{R}(\cdot,s)\|_1^2 \|\hat{R}(\cdot,s)\|_\infty,\\
|(\hat{\sigma}^{\ast j} \ast \hat{R}^{\ast (3-j)})(0,s)| &\leq \|\hat{R}(\cdot,s)\|_1^{3-j} \|\hat{\sigma}(\cdot,s)\|_\infty \|\hat{\sigma}(\cdot,s)\|_1^{j-1} \leq (1+s)^{-(j-1)/2} \|\hat{R}(\cdot,s)\|_1^{3-j}, \qquad j = 1,2,
\end{align*}
for $s \in [t_1,t]$. Thus, using $\eta(t) \leq \eta_0 \leq 1$ and identity~\eqref{Aid}, we obtain
\begin{align}\begin{split}
\int_{t_1}^t &\left|A(s)^{-3} \hat{R}^{\ast 3}(0,s)\right| \de s \leq C \int_{t_1}^t \eta(s)^3 \frac{\left(\left(\delta + \eta(s)^2\right)^{-2} + \log(1+s)\right)^{3/2}}{(1+s)\log^{\frac{9}{4}}(s + 2)} \de s\\
&\qquad \leq C\eta(t)\left(\left(\delta + \eta(t_1)^2\right)^{-2} + \log(1+t)\right) \int_{t_1}^t \frac{\left(\delta + \eta(s)^2\right)^{-1} \eta(s)^2 + \log^{\frac{1}{2}}(1+s)}{(1+s)\log^{\frac{9}{4}}(s + 2)} \de s \\
&\qquad \leq C\eta_0\left(\left(\delta + \eta(t_1)^2\right)^{-2} + 2\nu\log(1+t)\right).
\end{split}\label{Aest1}\end{align}
Similarly, we bound
\begin{align}\begin{split}
\int_{t_1}^t \left|A(s)^{-2} \left(\hat{\sigma} \ast \hat{R}^{\ast 2}\right)(0,s)\right| \de s &\leq C \int_{t_1}^t \eta(s)^2 \frac{\left(\left(\delta + \eta(s)^2\right)^{-2} + \log(1+s)\right)}{(1+s)\log^{\frac{3}{2}}(s + 2)} \de s\\
&\leq C\eta_0\left(\left(\delta + \eta(t_1)^2\right)^{-2} + 2\nu\log(1+t)\right),
\end{split}\label{Aest2}\end{align}
and
\begin{align}\begin{split}
\int_{t_1}^t \left|A(s)^{-1} \left(\hat{\sigma}^{\ast 2} \ast \hat{R}\right)(0,s)\right|& \de s \leq C \int_{t_1}^t \eta(s) \frac{\sqrt{\left(\delta + \eta(s)^2\right)^{-2} + \log(1+s)}}{(1+s)\log^{\frac{3}{4}}(s + 2)} \de s\\
&\leq C\eta_0\left(\left(\delta + \eta(t_1)^2\right)^{-2} + 2\nu\log(1+t)\right).
\end{split}\label{Aest3}\end{align}

Recall that we derived~\eqref{finalv} for all $t \in [0,T)$ such that $\eta(t) \leq \eta_0$. For $s \in [t_1,t]$ it holds $\eta(s) \leq \eta(t) \leq \eta_0$ and, hence, we can use~\eqref{finalv} to estimate the integral over $A(s)^{-3}B(0,s)$ in~\eqref{AeqINT}:
\begin{align*}
\left|B(0,s)\right| &\leq \left\|B(\cdot,s)\right\|_{\infty} \leq C\left\|\left(\ut \cdot \vt\right)(\cdot,s)\right\|_{1}\\
 &\leq C \eta(s) \left(\delta + \eta(s)^2\right) \int_\R \frac{e^{-\tfrac{\xt^2}{M(1+s)}}}{1+s} \left(e^{-\tfrac{(\xt+cs)^2}{M(1+s)}} + \int_0^s \frac{e^{-\tfrac{\left(\xt + (s-r) c\right)^2}{M(1+s)}}}{\sqrt{1+r}\sqrt{r}}\left(\frac{\sqrt[8]{1+r}}{r^{3/8}} + \frac{1}{\sqrt{s-r}}\right) \de r\right)\\
 &\leq C \eta(s) \left(\delta + \eta(s)^2\right) \left(\frac{e^{-\frac{c^2 s^2}{2M(1+s)}}}{\sqrt{1+s}} + \int_0^s \frac{e^{-\frac{c^2(s-r)^2}{2M(1+s)}}}{\sqrt{1+s}\sqrt{1+r}\sqrt{r}}\left(\frac{\sqrt[8]{1+r}}{r^{3/8}} + \frac{1}{\sqrt{s-r}}\right) \de r\right).
\end{align*}
We estimate the integral over $r$ in the above by splitting it in two integrals over $[0,\frac{s}{2}]$ and $[\frac{s}{2},s]$ and by subsequently using~\eqref{integralID5}. Thus, we obtain
\begin{align}
\begin{split}
\left|B(0,s)\right| &\leq \frac{C \eta(s) \left(\delta + \eta(s)^2\right)}{(1+s)^{5/4}},
\end{split} \label{mixint}
\end{align}
for $s \in [t_1,t]$.
This leads to the estimate
\begin{align}\begin{split}
\int_{t_1}^t \left|A(s)^{-3} B(0,s)\right| \de s &\leq C \int_{t_1}^t \eta(s) \frac{\left(\left(\delta + \eta(s)^2\right)^{-2} + \log(1+s)\right)^{3/2}\left(\delta + \eta(s)^2\right)}{(1+s)^{5/4}} \de s\\
&\leq C\eta(t)\left(\left(\delta + \eta(t_1)^2\right)^{-2} + \log(1+t)\right) \int_{t_1}^t \frac{1 + \log^{\frac{1}{2}}(1+s)}{(1+s)^{5/4}} \de s \\
&\leq C\eta_0\left(\left(\delta + \eta(t_1)^2\right)^{-2} + 2\nu\log(1+t)\right).
\end{split}\label{Aest4}\end{align}
Combining~\eqref{Aest1},~\eqref{Aest2},~\eqref{Aest3} and~\eqref{Aest4} we can estimate the integral in~\eqref{AeqINT} and we obtain, provided $\eta_0$ is sufficiently small, the lower bound
\begin{align*}
A(t)^{-2} \geq \left(1-C\eta_0\right)\left(\left(\delta + \eta(t_1)^2\right)^{-2} + 2\nu \log(t+1)\right) \geq \frac{1}{2}\left(\left(\delta + \eta(t)^2\right)^{-2} + 2\nu \log(t+1)\right),
\end{align*}
which contradicts $t \notin \Es$. We conclude
\begin{align}
|A(t)| \sqrt{\left(\delta + \eta(t)^2\right)^{-2} + 2 \nu \log(1+t)} \leq 1. \label{Aestfin}
\end{align}

\subsubsection{Analysis of the \texorpdfstring{$R$}{ }-equation}
We integrate~\eqref{Req} and obtain the Duhamel formulation
\begin{align}
\begin{split}
\hat{R}(k,t) &= e^{-d_1k^2 t} \hat{R}(k,0) + \int_0^t e^{-d_1 k^2 (t-s)} \left(N(k,s) - N(0,s)\hat{\sigma}(k,s) + B(k,s) - B(0,s)\hat{\sigma}(k,s)\right)\de s,
\end{split} \label{DuhamelR}
\end{align}
for $k \in \R$ and $t \in [0,T)$. We estimate the linear and nonlinear terms in~\eqref{DuhamelR} one by one.

\paragraph*{Linear estimates.} First, by~\eqref{initcond4},~\eqref{defRFourier}, using $\ut(\cdot,0) = u_0$ and the Sobolev embedding $H^2(\R) \hookrightarrow \left\{f : \hat{f} \in L^1(\R)\right\}$, we obtain
\begin{align*}
\left\|\hat{R}(\cdot,0)\right\|_{1} &\leq \left\|\hat{\ut}(\cdot,0)\right\|_{1} + |A(0)|\left\|\hat{\sigma}(\cdot,0)\right\|_{1}
\leq C\left(\|u_0\|_{H^2} + \int_\R |u_0(x)| \de x \right) \leq C\delta,\\
\left\|\partial_k^j \hat{R}(\cdot,0)\right\|_{\infty} &\leq \left\|\partial_k^j \hat{\ut}(\cdot,0)\right\|_{\infty} + |A(0)|\left\|\partial_k^j \hat{\sigma}(\cdot,0)\right\|_{\infty}
\leq C\left(\int_\R \left|x^j u_0(x)\right|\de x + \int_\R |u_0(x)| \de x \right) \leq C\delta,
\end{align*}
for $j = 0,1$. We use $\hat{R}(0,0) = 0$ and employ the estimates
\begin{align*}
L_1 &\leq \int_\R |k| e^{-d_1k^2 t} \left\|\partial_k \hat{R}(\cdot,0)\right\|_\infty \de k  \leq \frac{C\delta}{t}, \qquad& L_1 &\leq \left\|\hat{R}(\cdot,0)\right\|_1 \leq C\delta,\\
L_\infty &\leq \left\|\partial_k \hat{R}(\cdot,0)\right\|_\infty \sup_{k \in \R} |k| e^{-d_1k^2 t}  \leq \frac{C\delta}{\sqrt{t}}, \qquad& L_\infty &\leq \left\|\hat{R}(\cdot,0)\right\|_\infty  \leq C\delta.
\end{align*}
Hence, the linear term in~\eqref{DuhamelR} enjoys the following bounds
\begin{align}
L_1 := \int_\R e^{-d_1k^2 t} \left|\hat{R}(k,0)\right|\de k \leq \frac{C\delta}{1+t}, \qquad L_\infty := \sup_{k \in \R} e^{-d_1k^2 t} \left|\hat{R}(k,0)\right| \leq \frac{C\delta}{\sqrt{1+t}}. \label{linRbounds}
\end{align}

\paragraph*{Nonlinear estimates.}  We start by bounding the nonlinear term
\begin{align*}
\N(k,t) := \int_0^t e^{-d_1 k^2 (t-s)} \left(N(k,s) - N(0,s)\hat{\sigma}(k,s) \right)\de s,
\end{align*}
in~\eqref{DuhamelR}. First, we estimate the integrand in $\N(k,t)$. On the one hand, we have
\begin{align*}
\left\|\hat{\ut}(\cdot,t)\right\|_1 &\leq |A(t)| \left\|\hat{\sigma}(\cdot,t)\right\|_1 + \left\|\hat{R}(\cdot,t)\right\|_1 \leq C\left(\frac{\min\left\{\eta(t)^2 + \delta, \log^{-\frac{1}{2}}(1+t)\right\}}{\sqrt{1+t}} + \frac{\eta(t)}{\sqrt{1+t}\log^{\frac{3}{4}}(t+2)}\right),
\end{align*}
by~\eqref{defRFourier} and~\eqref{Aestfin}. In addition, we establish
\begin{align*}
\left\|\partial_k^j \hat{\ut}(\cdot,t)\right\|_\infty &\leq \int_\R \left|\xt^j\ut(\xt,t)\right| \de \xt \leq C\eta(t) (1+t)^{j/2}, \qquad j = 0,1.
\end{align*}
Thus, with the aid of the mean value theorem and Young's convolution inequality we bound the integrand in $\N(k,t)$ as
\begin{align}
\begin{split}
&\left|N(k,s) - N(0,s)\hat{\sigma}(k,s)\right| \leq \left|N(k,s) - N(0,s)\right| + \left(1-e^{-d_1 k^2 (1+s)}\right) \left|N(0,s)\right|\\
&\leq C|k|\left(\|\partial_k N(\cdot,s)\|_\infty + \sqrt{1+s} \|N(\cdot,s)\|_\infty\right)\leq C\left\|\hat{\ut}(\cdot,s)\right\|_1^2 |k| \left(\left\|\partial_k \hat{\ut}(\cdot,s)\right\|_\infty + \sqrt{1+s} \left\|\hat{\ut}(\cdot,s)\right\|_\infty\right)\\
&\leq C|k| \eta(s) \frac{\sqrt{\delta} + \eta(s)}{\sqrt{1+s} \log^{\frac{3}{4}}(1+s)},
\end{split} \label{ANL}
\end{align}
for $s \in [0,t]$. Using $\eta(t) \leq \eta_0 \leq 1$, we also establish the second bound
\begin{align}
\begin{split}
\left|N(k,s) - N(0,s)\hat{\sigma}(k,s)\right| &\leq C\left\|N(\cdot,s)\right\|_\infty \leq C \left\|\hat{\ut}(\cdot,s)\right\|_\infty \left\|\hat{\ut}(\cdot,s)\right\|_1^2 \leq C \eta(s) \frac{\sqrt{\delta} + \eta(s)}{(1+s) \log^{\frac{3}{4}}(1+s)},
\end{split}\label{BNL}
\end{align}
for $s \in [0,t]$, and the third bound
\begin{align}
\begin{split}
\left|N(k,s) - N(0,s)\hat{\sigma}(k,s)\right| &\leq C\left\|N(\cdot,s)\right\|_\infty \leq C \left\|\ut(\cdot,s)^3\right\|_1 \leq \frac{C \eta(s)^2}{1+s}.
\end{split}\label{CNL}
\end{align}

We are now in a good position to estimate $\N(k,t)$. First, for $t \geq 1$, we establish with the aid of~\eqref{ANL} and~\eqref{BNL}
\begin{align*}
\|\N(\cdot,t)\|_1 &\leq C \eta(t)\left(\sqrt{\delta} + \eta(t)\right) \left(\int_0^{\frac{t}{2}} \int_\R \frac{|k|e^{-d_1 k^2 (t-s)}}{\sqrt{1+s}\log^{\frac{3}{4}}(1+s)} \de k\de s +
\int_{\frac{t}{2}}^t \int_\R \frac{e^{-d_1 k^2 (t-s)}}{(1+s)\log^{\frac{3}{4}}(1+s)} \de k \de s\right)\\
&\leq C \eta(t)\left(\sqrt{\delta} + \eta(t)\right) \left(\int_0^{\frac{t}{2}} \frac{1}{(1+t) \sqrt{1+s}\log^{\frac{3}{4}}(1+s)} \de s +
\int_{\frac{t}{2}}^t \frac{1}{\sqrt{t-s}(1+t)\log^{\frac{3}{4}}(1+t)} \de s\right)\\
&\leq C\frac{\sqrt{\delta} + \eta(t)^2}{\sqrt{1+t}\log^{\frac{3}{4}}(2+t)},
\end{align*}
and, similarly, we obtain
\begin{align*}
\|\N(\cdot,t)\|_\infty &\leq C \left(\sqrt{\delta} + \eta(t)^2\right) \left(\int_0^{\frac{t}{2}} \frac{\sup_{k \in \R} \left|ke^{-d_1 k^2 (t-s)}\right|}{\sqrt{1+s}\log^{\frac{3}{4}}(1+s)} \de s +
\int_{\frac{t}{2}}^t \frac{\sup_{k \in \R} \left|e^{-d_1 k^2 (t-s)}\right|}{(1+s)\log^{\frac{3}{4}}(1+s)} \de s\right) \leq C\frac{\sqrt{\delta} + \eta(t)^2}{\log^{\frac{3}{4}}(2+t)}.
\end{align*}
On the other hand, for $t \leq 1$,~\eqref{CNL} yields the bounds
\begin{align*}
\|\N(\cdot,t)\|_1 &\leq C\eta(t)^2 \int_0^t \int_\R \frac{e^{-d_1 k^2 (t-s)}}{1+s} \de k\de s \leq C\eta(t)^2,\\
\|\N(\cdot,t)\|_\infty &\leq C\eta(t)^2 \int_0^t \frac{\sup_{k \in \R} e^{-d_1 k^2 (t-s)}}{1+s} \de s \leq C\eta(t)^2.
\end{align*}
We conclude that it holds
\begin{align}
\|\N(\cdot,t)\|_1 &\leq C\frac{\sqrt{\delta} + \eta(t)^2}{\sqrt{1+t}\log^{\frac{3}{4}}(2+t)}, \qquad & \|\N(\cdot,t)\|_\infty &\leq C\frac{\sqrt{\delta} + \eta(t)^2}{\log^{\frac{3}{4}}(2+t)}. \label{NLR1}
\end{align}

We proceed with the estimation of the remaining nonlinear term
\begin{align*}
\B(k,t) := \int_0^t e^{-d_1 k^2 (t-s)} \left(B(k,s) - B(0,s)\hat{\sigma}(k,s) \right)\de s,
\end{align*}
in~\eqref{DuhamelR}. We use~\eqref{mixint} and
\begin{align*}
\left\|\partial_k B(\cdot ,s)\right\|_\infty \leq C\int_\R \left|\xt\ut(\xt,s)\vt(\xt,s)\right| \de \xt \leq C\int_\R \left|\xt\ut(\xt,s)\right| \de \xt \left\|\vt(\cdot,s)\right\|_\infty \leq C \eta(s)^2,
\end{align*}
for $s \in [0,t]$, to bound the integrand in $\B(k,t)$ as
\begin{align}
\begin{split}
&\left|B(k,s) - B(0,s)\hat{\sigma}(k,s)\right| \leq C\|B(\cdot,s)\|_\infty^{3/4} \left(\left|B(k,s) - B(0,s)\right| + \left(1-e^{-d_1 k^2 (1+s)}\right) \left|B(0,s)\right|\right)^{1/4}\\
&\qquad \leq C\|B(\cdot,s)\|_\infty^{3/4} \left(|k| \|\partial_k B(\cdot,s)\|_\infty + |k|\sqrt{1+s} \|B(\cdot,s)\|_\infty\right)^{1/4} \leq C\frac{\eta(s)^2 \sqrt[4]{|k|} }{(1+s)^{15/16}}.
\end{split}\label{BBNL}
\end{align}
Thus, applying~\eqref{BBNL} we bound
\begin{align}
\|\B(\cdot,t)\|_1 \leq C\eta(t)^2 \int_0^t \int_\R \frac{\sqrt[4]{|k|} e^{-d_1k^2 (t-s)}}{(1+s)^{15/16}} \de k \de s \leq  \int_0^t \frac{C\eta(t)^2\de s}{(t-s)^{5/8}(1+s)^{15/16}}\leq \frac{C\eta(t)^2}{(1+t)^{9/16}}, \label{NLR2}
\end{align}
and, similarly, we obtain
\begin{align}
\|\B(\cdot,t)\|_\infty \leq C\eta(t)^2 \int_0^t \frac{\sup_{k \in \R} \sqrt[4]{|k|} e^{-d_1k^2 (t-s)}}{(1+s)^{15/16}} \de s \leq \frac{C\eta(t)^2}{(1+t)^{1/16}}. \label{NLR3}
\end{align}

\paragraph*{Final estimate.} Finally, combining~\eqref{DuhamelR},~\eqref{linRbounds},~\eqref{NLR1},~\eqref{NLR2} and~\eqref{NLR3} we conclude
\begin{align}
\log^{\frac{3}{4}}(t+2) \left(\left\|\hat{R}(\cdot,t)\right\|_\infty  + \sqrt{1+t}\left\|\hat{R}(\cdot,t)\right\|_1\right) \leq C\left(\sqrt{\delta} + \eta(t)^2\right). \label{finalr}
\end{align}

\subsubsection{Analysis of the \texorpdfstring{$\ut$}{ }-equation}
Denote by $\U$ denotes the drifting Gaussian
\begin{align*} \U(\xt,t) := \frac{e^{-\tfrac{\xt^2}{4d_1t}}}{\sqrt{4\pi d_1t}}.\end{align*}
We integrate the $\ut$-equation in~\eqref{CAS32} and obtain the Duhamel formulation
\begin{align}
\begin{split}
\partial_\xt^j \ut(\xt,t) &= \int_{\R} \partial_\xt^j \U(\xt\!-\!y,t) u_0(y)\de y + \int_0^t \int_\R \partial_\xt^j \U(\xt\!-\!y,t\!-\!s) \left(\alpha \ut(y,s)\vt(y,s) \!-\! \mu \ut(y,s)^3\right) \de y \de s,\\
\end{split} \label{Duhamelu}
\end{align}
for $j=0,1$, $\xt \in \R$ and $t \in [0,T)$. We estimate the linear and nonlinear terms in~\eqref{Duhamelu} one by one.

\paragraph*{Linear estimates.} For $(u_0,v_0) \in H^2(\R,\R^2)$ satisfying~\eqref{initcond4}, we proceed as in~\eqref{linexp} and estimate the the linear term in~\eqref{Duhamelu} by
\begin{align}
\left|\int_{\R} \partial_\xt^j \U(\xt-y,t) u_0(y)\de y \right| \leq C \delta \frac{e^{-\tfrac{\xt^2}{M(1+t)}}}{t^{j/2}\sqrt{1+t}}, \label{linestu}
\end{align}
for $j = 0, 1$.

\paragraph*{Nonlinear estimates.} We first consider the cubic term in~\eqref{Duhamelu}. By~\eqref{defR}, the cubic term can be expanded as
\begin{align*}
\ut(y,s)^3 = \sigma(y,s)^3 A(s)^3 + 3\sigma(y,s)^2 A(s)^2 R(y,s) + 2 \sigma(y,s) A(y,s) R(y,s)^2 + R(y,s)^2 \ut(y,s).
\end{align*}
for $s \in [0,t]$ and $y \in \R$. Using~\eqref{Aestfin}, $\|R(\cdot,s)\|_\infty \leq \|\hat{R}(\cdot,s)\|_1$, $M \geq 4d_1$ and $\eta(t) \leq \eta_0 \leq 1$, we establish the following pointwise bounds for each term in the above expansion:
\begin{align*}
\left|\sigma(y,s)^l A(s)^l R(y,s)^{3-l}\right| &\leq \frac{Ce^{-\tfrac{y^2}{M(1+s)}}}{(1+s)^{3/2} \left(\left(\delta + \eta(s)^2\right)^{-2} + \log(1+s)\right)^{l/2} \log^{\frac{3(3-l)}{4}}(2+s)},\\
\left|R(y,s)^2 \ut(y,s)\right| &\leq \frac{C\eta(s)^2 e^{-\tfrac{y^2}{M(1+s)}}}{(1+s)^{3/2} \log^{\frac{3}{2}}(2+s)},
\end{align*}
for $l = 1,2,3$, $y \in \R$ and $s \in [0,t]$. We apply the integral identities~\eqref{integralid1} and
\begin{align*}
\int_0^t \frac{1}{(\alpha^{-2} + r)\sqrt{r}} \de r = 2 \alpha \arctan\left(\alpha \sqrt{t}\right), \qquad \alpha, t \geq 0,
\end{align*}
to conclude
\begin{align}
\begin{split}
&\left|\int_0^t \int_\R \partial_\xt^j \U(\xt-y,t-s)\ut(y,s)^3 \de y \de s\right| \\ &\leq C\int_0^t \int_\R \frac{e^{-\tfrac{(\xt-y)^2}{M(t-s)} - \tfrac{y^2}{M(1+s)}}}{(1+s)^{3/2}(t-s)^{(1+j)/2}} \left(\sum_{l = 1}^3 \frac{\left(\left(\delta + \eta(s)^2\right)^{-2} + \log(1+s)\right)^{-l/2}}{\log^{\frac{3(3-l)}{4}}(2+s)} + \frac{\eta(s)^2}{\log^{\frac{3}{2}}(2+s)}\right) \de y \de s\\
&\leq \frac{Ce^{-\tfrac{\xt^2}{M(1+t)}}}{\sqrt{1+t}} \left(\sum_{l = 2}^3 \int_0^{\log(1+\frac{t}{2})} \frac{\de r}{t^{j/2} \left(\left(\delta + \eta(t)^2\right)^{-2} + r\right)^{l/2} r^{(3-l)/2}} +\int_0^{\frac{t}{2}} \frac{\delta + \eta(t)^2}{t^{j/2} (2+s) \log^{\frac{3}{2}}(2+s)} \de s\right.\\
&\left.\qquad\qquad\qquad\qquad\qquad \qquad\qquad\qquad + \int_{\frac{t}{2}}^t \frac{\delta + \eta(t)^2}{(1+t)(t-s)^{j/2}} \de s\right) \leq C\left(\delta + \eta(t)^2\right)\frac{e^{-\tfrac{\xt^2}{M(1+t)}}}{t^{j/2}\sqrt{1+t}},
\end{split} \label{unl1}
\end{align}
for $j = 0,1$.

Using $M \geq 8d_1$, we estimate the remaining mix-term in~\eqref{Duhamelu} as follows
\begin{align}
\left|\int_0^t\int_\R \partial_\xt^j \U(\xt - y,t-s) \ut(y,s)\vt(y,s) \de y \de s\right| \leq C\eta(t)^2\left(I_j + \I_j\right), \label{estI7}
\end{align}
for $j = 0,1$, with $I_j$ defined in~\eqref{defIj} and
\begin{align*}
\I_j = \int_0^t \int_0^s \int_\R \frac{e^{-\tfrac{2(\xt-y)^2}{M(t-s)} - \tfrac{y^2}{M(1+s)}-\tfrac{(y + c(s - r))^2}{M(1+s)}}}{(1+s)(t-s)^{(1+j)/2}\sqrt{1+r}\sqrt{r}}\left(\frac{\sqrt[8]{1+r}}{r^{3/8}} + \frac{1}{\sqrt{s-r}}\right)\de y \de r \de s.
\end{align*}
An estimate on $I_j$ has already been obtained in~\eqref{estM4}. Using~\eqref{integralid1}, we calculate the inner integral in $\I_j$ and establish
\begin{align*}
\begin{split}
\I_j &\leq C \frac{e^{-\tfrac{\xt^2}{M(1+t)}}}{\sqrt{1+t}} \int_0^t \int_0^s \frac{e^{-\tfrac{(\xt + c(s-r))^2}{M (1+t)}-\tfrac{(s-r)^2 (t-s) c^2}{2 M (1+s) (1+t)}}}{(t-s)^{j/2}\sqrt{1+s}\sqrt{1+r}\sqrt{r}} \left(\frac{\sqrt[8]{1+r}}{r^{3/8}} + \frac{1}{\sqrt{s-r}}\right)\de r \de s\\&\leq C \frac{e^{-\tfrac{\xt^2}{M(1+t)}}}{\sqrt{1+t}}\left(\I_{1,j} + \I_{2,j} + \I_{3,j}\right),
\end{split}
\end{align*}
where, by~\eqref{innerest}, we have
\begin{align*}
\I_{1,j} \!:=\! \int_0^t \int_0^{\frac{s}{2}} \frac{e^{-\tfrac{s^2 (t-s) c^2}{8 M (1+s) (1+t)}}(t-s)^{-j/2}}{\sqrt{1+s}\sqrt{1+r}\sqrt{r}} \left(\frac{\sqrt[8]{1+r}}{r^{3/8}} \!+\! \frac{1}{\sqrt{s-r}}\right) \de r \de s \leq C \int_0^t \frac{e^{-\tfrac{s^2 (t-s) c^2}{8 M (1+s) (1+t)}} \de s}{(t-s)^{j/2} \sqrt{1+s}} \!\leq\! \frac{C}{(1+t)^{j/2}},
\end{align*}
and, using~\eqref{integralID5} and $c \neq 0$, it holds
\begin{align*}
\I_{2,j} &:= \int_0^{t} \int_{\frac{s}{2}}^s \frac{e^{-\tfrac{(s-r)^2 (t-s) c^2}{2 M (1+s) (1+t)}}}{(1+s)^{7/8}s^{7/8}(t-s)^{j/2}} \de r \de s \\ &\leq C\left(\int_0^{\min\{1,t\}} \frac{1}{(1+s)^{3/4}(t-s)^{j/2}} \de s + \int_{\min\{1,t\}}^t \frac{\sqrt{1+t}}{(1+s)^{3/8} s^{7/8} (t-s)^{(j+1)/2}} \de s\right) \leq \frac{C}{(1+t)^{j/2}},\\
\I_{3,j} &:= \int_0^{t} \int_{\frac{s}{2}}^s \frac{e^{-\tfrac{(s-r)^2 (t-s) c^2}{2 M (1+s) (1+t)}}}{(1+s)\sqrt{s}\sqrt{s-r}(t-s)^{j/2}} \de r \de s \\&\leq C\left(\int_0^{\min\{1,t\}}  \frac{1}{(1+s)(t-s)^{j/2}} \de s + \int_{\min\{1,t\}}^t  \frac{\sqrt[4]{1+t}}{(1+s)^{3/4}\sqrt{s}(t-s)^{(1+2j)/4}} \de s\right) \leq \frac{C}{(1+t)^{j/2}}.
\end{align*}
So, we have obtained the estimate
\begin{align*}
 \I_j \leq \frac{e^{-\tfrac{\xt^2}{M(1+t)}}}{(1+t)^{(1+j)/2}}, \qquad j = 0,1.
\end{align*}
Combining the latter with~\eqref{estM4} and~\eqref{estI7} yields the bound
\begin{align}
\left|\int_0^t\int_\R \partial_\xt^j \U(\xt -y,t-s) \ut(y,s)\vt(y,s) \de y \de s\right| \leq C\eta(t)^2 \frac{e^{-\tfrac{\xt^2}{M(1+t)}}}{t^{j/2}\sqrt{1+t}}, \qquad j=0,1.\label{unl2}
\end{align}

\paragraph*{Final estimates.} Applying the estimates~\eqref{linestu},~\eqref{unl1} and~\eqref{unl2} on the linear and nonlinear terms in~\eqref{Duhamelu} we establish
\begin{align}
e^{\tfrac{\xt^2}{M(1+t)}}\sqrt{1+t} \left(|\ut(x,t)| + \sqrt{t}|\ut_\xt(\xt,t)|\right) \leq C\left(\delta + \eta(t)^2\right). \label{finalu}
\end{align}
Finally, recall that we derived~\eqref{finalv} and~\eqref{finalu} for all $t \in [0,T)$ such that $\eta(t) \leq \eta_0$. For each $s \in [0,t]$ it holds $\eta(s) \leq \eta(t) \leq \eta_0$ and, hence, we can use~\eqref{finalv} and~\eqref{finalu} to estimate
\begin{align*}\|(\ut,\vt)(\cdot,s)\|_2 \leq C\frac{\delta + \eta(s)^2}{\sqrt[4]{1+s}},\end{align*}
for all $s \in [0,t]$. Combining the latter with~\eqref{boundderiv} yields
\begin{align}
\|\ut(\cdot,t)\|_{H^2} \leq C\left(\delta + \eta(t)^2\right). \label{finalH}
\end{align}

\subsection{Conclusion}
For initial conditions $(u_0,v_0) \in X_{\rho_E}^\alpha$ satisfying~\eqref{initcond4} and for $\eta_0 > 0$ sufficiently small, the estimates~\eqref{finalv},~\eqref{finalr},~\eqref{finalu} and~\eqref{finalH} yield the key inequality~\eqref{etaest5}, which proves, as explained in~\S\ref{sec:planMR4}, Theorem~\ref{mainresult4}. $\hfill \Box$

\section{Future outlook} \label{sec:discussion}

In this section we comment on open problems, future extensions and possible applications of our results.

\paragraph*{Multiple spatial dimensions or multiple components.} Perhaps the most natural way to extend our results is to increase the number of components or spatial dimensions in~\eqref{GRD}, i.e.~to consider the class of reaction-diffusion-advection systems
\begin{align}
u_t = D \Delta u + \sum_{i = 1}^d C_i u_{x_i} + f(u) + \sum_{i = 1}^d \left(g_i(u)\right)_{x_i}, \quad u(x,t) \in \R^n, x \in \R^d, \label{RD}
\end{align}
where $\Delta$ is the Laplacian, $D$ is a nonnegative diagonal matrix of diffusion coefficients, $C_i$ is a diagonal matrix of velocities and $f,g_i \colon \R^n \to \R^n$ are smooth nonlinearities with $f(0), (Df)(0) = 0$ and $g_i(0), (Dg_i)(0) = 0$. Systems of the form~\eqref{RD} model $n$ reactants, which are subject to species-dependent diffusion and drift, in an unbounded, $d$-dimensional domain.

Increasing the spatial dimension $d$ improves the temporal decay on the linear level. Indeed, localized solutions in $L^1(\R^d,\R^n) \cap L^\infty(\R^d,\R^n)$ to the associated linear system
\begin{align*}u_t = D\Delta u + \sum_{i = 1}^d C_i u_{x_i},\end{align*}
decay with rate $t^{-d/2}$. Thus, we expect that our results on global existence, Theorems~\ref{mainresult1} and~\ref{mainresult2}, have counterparts in higher spatial dimensions (upon adapting the weights to $\rho_E(x) = e^{-\|x\|^2/M}$ and $\rho_A(x) = (1+\|x\|)^r$). In fact, we expect that the proofs simplify: due to the improved temporal decay properties only quadratic nonlinearities are marginal if $d = 2$ and all smooth nonlinearities are irrelevant for $d \geq 3$. This leads to the natural question whether the additional decay can be exploited to relax the localization assumption on the initial data. In the setting of planar traveling waves, it is shown in~\cite{RSAN} that one can allow for non-localized perturbations. We expect that these results transfer to the current setting in this paper.

We expect that an extension to multiple components of Theorem~\ref{mainresult1} is rather straightforward as long as the nonlinear terms in the $i$-th component have a nontrivial contribution of the $i$-th component itself, i.e.~we require that the nonlinear terms in the $i$-th component are of the form $\partial_x^j (u_i h(u))$ with $j = 0,1$ and $h \colon \R^n \to \R$. Indeed, the spatio-temporal weights on the $i$-th component in the proof of Theorem~\ref{mainresult1} (see the definition of the functions $\eta_E$ and $\eta_A$ in~\S\ref{sec:planM}) depend only on the velocity of the $i$-th component itself and not on the characteristics of the other components.

At the moment, it is rather unclear whether Theorem~\ref{mainresult2} can be extended to the multi-component setting, i.e.~whether we can accommodate any irrelevant nonlinearity and any mix-term in the case of multiple components. In the proof of Theorem~\ref{mainresult2}, the spatio-temporal weight on the $i$-th component (see the definition of the function $\eta$ in~\S\ref{sec:planIRR}) depends on the velocities of \emph{all} components due to the presence of terms of the form~\eqref{drag}. Thus, the number of terms in the spatio-temporal weight would increase rapidly with the number of components, which could complicate the analysis. In addition, in systems with more than two components, mix-terms in the $i$-th component can occur which do not have contributions from the $i$-th component itself. Currently, it is still open how to control such mix-terms. For instance, a term $u_2u_3$ in the equation for $u_1$ leads to a bound of the form
\begin{align*}
\int_0^t\int_{\R} \frac{e^{-\tfrac{2(x-y+c_1(t-s))^2}{M(t-s)} - \tfrac{(y+c_2s)^2}{M(1+s)} - \tfrac{(y+c_3s)^2}{M(1+s)}}}{(1+s)\sqrt{t-s}}\de y\de s,
\end{align*}
which can be compared with the bound $I_0$ in~\eqref{defIj}. However, treating this expression in a similar way as $I_0$ in the hope to
obtain a similar bound as~\eqref{estM4} is problematic as the analysis in~\S\ref{estexpM} breaks down when $(c_1-c_2)(c_1-c_3) > 0$.

\paragraph*{Larger classes of initial data.} Another possible future direction is to study to what extend we can relax the localization requirements on the initial data in our main results, Theorem~\ref{mainresult1} and~\ref{mainresult2}. A first attempt to obtain a larger class of initial data is to relax the algebraic localization in Theorem~\ref{mainresult1} by reducing the power $r$. In this light, we remark that in the global existence analysis in the nonlinear heat equation $u_t = u_{xx} + u^p$ with $p > 3$ in~\cite[Section 6]{JUN} one takes $r > 2$ instead of $r \geq 3$.

Second, one could try to prove Theorem~\ref{mainresult2}  for algebraically localized initial conditions. For such initial conditions, irrelevant nonlinear coupling terms which are not of mix-type, lead, as in~\S\ref{sec:irrelevantcoupling}, to bounds of the form
\begin{align}
\int_0^t\int_{\R} \frac{e^{-\tfrac{4(x-y+c_1(t-s))^2}{M(t-s)}}}{\left(1 + |x+c_2s| + \sqrt{s}\right)^{2r} \sqrt{t-s}} \de y\de s, \label{nasty}
\end{align}
with $c_1 \neq c_2$, which can be compared with the integrals $II_0$ and $III_0$ occurring in~\S\ref{sec:nonlalg}. However, the fact that $c_1 \neq c_2$ seems to prohibit a similar treatment as in~\S\ref{sec:nonlalg}. The integrals occurring on pages 344 and 349 in~\cite{HOWZUM} are similar to~\eqref{nasty} and therefore this reference could be of help.

\paragraph*{Full characterization of admissible nonlinearities.} In the scalar setting it is well-known, see Remark~\ref{scalar}, which smooth nonlinearities are controlled by the linear dynamics and which ones might lead to growth or even finite time blow-up. At the moment such a full characterization of admissible smooth nonlinearities is still open for reaction-diffusion-advection \emph{systems}, although some specific cases have been addressed, see for instance~\cite{ESCHER,ESCLEV,BKX}

In this paper we make a first step in the direction of a full characterization by showing that any nonlinearity containing irrelevant terms and mix-terms can be controlled by the linear dynamics when components propagate with different velocities. However, even when components exhibit different velocities, the question which other relevant or marginal nonlinear terms can be included is very subtle. Theorem~\ref{mainresult4} even demonstrates that one needs to consider the \emph{full} nonlinearity, instead of just the leading order (or `most dangerous') term of the nonlinearity, like in the scalar setting.

\paragraph*{Application to the nonlinear stability analysis of wave trains.} The perturbation equations arising in the nonlinear stability analysis of wave-train solutions to reaction-diffusion systems are, in the appropriate co-moving frame, reaction-diffusion-advection systems of the form~\eqref{GRD} where the coefficients are spatially periodic. Thus, techniques to prove global existence and decay of small solutions (like the methods mentioned in~\S\ref{overview}) can be employed in these perturbation equations to prove nonlinear stability, see for instance~\cite{JUN,JONZ,SAN3}.

We expect that the techniques developed in this paper can be applied to prove nonlinear stability of wave trains at the \emph{Eckhaus boundary}, which is believed to play an important role in pattern formation~\cite{ahlers}. At the Eckhaus boundary, wave trains are marginally spectrally stable and at the threshold of destabilization. Yet, nonlinear stability was proven in the case of a sideband destabilization~\cite{ECKSCH}. When the wave train is at the threshold of a Hopf destabilization, the associated critical modes generally exhibit different group velocities. We expect that the methods developed in this paper could be employed to exploit this difference in group velocities and prove nonlinear stability for wave trains at the Eckhaus boundary in the case of a Hopf destabilization.

\bibliographystyle{plain}
\bibliography{mybib}

\end{document}